\newtheorem{Theorem}{Theorem}[section]
\newtheorem{prop}[Theorem]{Proposition}
\newtheorem{Lemma}[Theorem]{Lemma}
\newtheorem{Remark}[Theorem]{Remark}
\newtheorem{Corollary}[Theorem]{Corollary}
\def\beq#1#2\eeq{%
        \begin{equation}%
        \label{#1}%
            #2%
        \end{equation}%
    }
\title[Chern-Dold]{Chern-Dold character in complex cobordisms and theta divisors}
\author{V.M. Buchstaber}\address{Steklov Mathematical Institute and Moscow State University, Russia}
\email{buchstab@mi-ras.ru}
\author{A.P. Veselov}
\address{Department of Mathematical Sciences,
Loughborough University, Loughborough LE11 3TU, UK}
\email{A.P.Veselov@lboro.ac.uk}
\begin{document}

\maketitle

\begin{abstract}
We show that the smooth theta divisors of general principally polarised abelian varieties can be chosen as irreducible algebraic representatives of the coefficients of the Chern-Dold character in complex cobordisms and describe the action of the Landweber-Novikov operations on them.
We introduce a quantisation of the complex cobordism theory with the dual Landweber-Novikov algebra as the deformation parameter space and show that the Chern-Dold character can be interpreted as the composition of quantisation and dequantisation maps.
Some smooth real-analytic representatives of the cobordism classes of theta divisors are described in terms of the classical Weierstrass elliptic functions. The link with the Milnor-Hirzebruch problem about possible characteristic numbers of irreducible algebraic varieties is discussed.
\end{abstract}

%\tableofcontents

\section{Introduction}

In the complex cobordism theory, going back to the foundational works of Milnor and Novikov \cite{Mil-60}, \cite{Nov-62}, a prominent role is played by the Chern-Dold character introduced by the first author in \cite{B-1970}. 
%In particular, it appears in the formulation of an analogue of the Riemann-Roch-Grothendieck-Hirzebruch theorem in the theory of complex cobordisms, see \cite{B-2012}.

By definition, the Chern-Dold character $ch_U$  is a natural multiplicative transformation of cohomology theories
$$
ch_U: U^*(X)\to H^*(X, \Omega_U\otimes \mathbb Q),
$$
where $U^*(X)$ is the complex cobordism ring of a $CW$-complex $X$ and $\Omega_U=U^*(pt),$ where $pt$ is a point, is the cobordism ring of the stably complex manifolds (or, in short, $U$-manifolds).
It is uniquely defined by the condition that when $X=pt$ 
\beq{CDpt}
ch_U:\Omega_U \to H^*(pt, \Omega_U\otimes \mathbb Q)=\Omega_U\otimes \mathbb Q
\eeq
is the canonical homomorphism of $\Omega_U$  to $\Omega_U\otimes \mathbb Q.$ This implies that for any finite $CW$-complex $X$ the transformation 
\beq{CDX}
ch_U\otimes \mathbb Q: U^*(X)\otimes \mathbb Q\to H^*(X, \Omega_U\otimes \mathbb Q) 
\eeq
is an isomorphism of $\Omega_U$-modules.

The fundamental Milnor-Novikov result says that the coefficient ring of the theory $U^*(X)$ is the graded polynomial ring $$\Omega_U=\mathbb Z[y_1,\dots,y_n,\dots],\, \deg y_n=-2n$$
of infinitely many generators $y_n, \, n \in \mathbb N.$ 

Let $u \in U^2(\mathbb CP^\infty)$ and $z\in H^2(\mathbb CP^\infty)$ be the first Chern classes of the line bundle associated to the hyperplane section
over $\mathbb CP^\infty$ in the complex cobordisms and cohomology theory respectively.
The Chern-Dold character is uniquely defined by its action
\beq{CD}
ch_U: u \to \beta(z), \quad \beta(z):=z+\sum_{n=1}^\infty[\mathcal B^{2n}]\frac{z^{n+1}}{(n+1)!},
\eeq
where $\mathcal B^{2n}$ are certain $U$-manifolds, characterised by their properties in \cite{B-1970}. 

The series $\beta(z)$ is the exponential of  the commutative formal group $$F(u,v)=u+v+\sum_{i,j}a_{i,j}u^iv^j$$ of  the geometric complex cobordisms introduced by Novikov in \cite{Nov-67}, so that
$$
F(\beta(z),\beta(w)) = \beta(z+w).
$$
Quillen identified this group with Lazard's universal one-dimensional commutative  formal group  \cite{Quil-69}.

The inverse of this series is the logarithm of this formal group, which can be given explicitly by
the Mischenko series  \cite{Nov-67, BMN-71}:
\beq{Mis}
\beta^{-1}(u)=u+\sum_{n=1}^\infty[\mathbb CP^n]\frac{u^{n+1}}{n+1}.
\eeq
The question whether there are smooth irreducible algebraic representatives of the cobordism classes $[\mathcal B^{2n}]$ in the exponential of the formal group given by the Chern-Dold character was open for a long time since 1970 (see \cite{B-1970}).

In this paper we give the following answer to this question, presenting an explicit form of the series (\ref{CD}) as
\beq{CDnew}
\beta(z)=z+\sum_{n=1}^\infty[\Theta^n]\frac{z^{n+1}}{(n+1)!},
\eeq
where $\Theta^n$ is a smooth theta divisor of a general principally polarised abelian variety $A^{n+1}$, considered as the complex manifold of real dimension $2n.$
The cobordism class of the theta divisor does not depend on the choice of such abelian variety provided $\Theta^n$ is smooth, which is true in general case \cite{AM67}.

\begin{Theorem}
The theta divisor $\Theta^n$ of a general principally polarised abelian variety $A^{n+1}$ is a smooth irreducible projective variety, which can be taken as an algebraic representative of the  coefficient $[\mathcal B^{2n}]$ in the Chern-Dold character.
\end{Theorem}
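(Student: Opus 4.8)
The strategy is to compute the characteristic numbers of the theta divisor $\Theta^n$ directly and match them against the defining properties of $[\mathcal B^{2n}]$ established in \cite{B-1970}. Recall that the manifold $\mathcal B^{2n}$ appearing in \eqref{CD} is characterised by the fact that the series $\beta(z)$ is the exponential of the formal group law of geometric cobordisms; equivalently, by the Mischenko inversion \eqref{Mis}, one has the relation between the coefficients $[\mathcal B^{2n}]$ and the $[\mathbb{CP}^n]$ obtained from composing $\beta$ with $\beta^{-1}$. Since $\Omega_U \otimes \mathbb{Q} = \mathbb{Q}[[\mathbb{CP}^1],[\mathbb{CP}^2],\dots]$, it suffices to verify that the classes $[\Theta^n]$ satisfy the \emph{same} defining relation, i.e. that $z + \sum [\Theta^n] z^{n+1}/(n+1)!$ is the functional inverse of the Mischenko series \eqref{Mis}. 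Thus the whole statement reduces to an identity in $\Omega_U \otimes \mathbb{Q}$, and identities there are detected by characteristic numbers, equivalently by the Chern numbers (or the value of the Hirzebruch genus / the image under the Chern-Dold character at a point).

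The key computational input is the adjunction-type description of the normal bundle and tangent bundle of $\Theta^n \subset A^{n+1}$. A general principally polarised abelian variety $A^{n+1}$ has trivial tangent bundle, and $\Theta^n$ is a smooth ample divisor whose associated line bundle $\mathcal{O}_{A}(\Theta)$ is the principal polarisation; hence the normal bundle is $N = \mathcal{O}_A(\Theta)|_\Theta$, and from $0 \to T\Theta \to T A|_\Theta \to N \to 0$ with $TA$ trivial we get that the total Chern class of $\Theta^n$ is $c(T\Theta) = (1 + x)^{-1}$, where $x = c_1(N) = c_1(\mathcal{O}_A(\Theta))|_\Theta$ restricted to $\Theta$. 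Therefore every Chern number of $\Theta^n$ is a polynomial in the single number $\langle x^n, [\Theta^n]\rangle$, which by the projection formula and the fact that $\Theta$ represents the class dual to $c_1(\mathcal{O}_A(\Theta))$ equals $\langle c_1(\mathcal{O}_A(\Theta))^{n+1}, [A^{n+1}]\rangle = (n+1)!$ (the degree of the principal polarisation; this is exactly where the factor $(n+1)!$ in \eqref{CDnew} comes from). Feeding this into Hirzebruch's formula for any multiplicative genus, one obtains a generating-function identity: if $\log$-series of the genus evaluated on $\mathbb{CP}^n$ produce $\sum [\mathbb{CP}^n] u^{n+1}/(n+1)$, the same genus on $\Theta^n$ produces a series that is tautologically its compositional inverse, because the classes $c(T\Theta) = (1+x)^{-1}$ with $\langle x^n\rangle = (n+1)!$ are precisely the coefficients of the Lagrange-inversion expansion of the exponential of the corresponding one-dimensional formal group — this is a standard computation showing $[\Theta^n]$ coincides with the universal coefficients $[\mathcal B^{2n}]$ in $\Omega_U\otimes\mathbb Q$.

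The remaining points are geometric rather than computational: (i) smoothness of $\Theta^n$ for a \emph{general} principally polarised $A^{n+1}$, which is classical and referenced as \cite{AM67}; (ii) irreducibility of $\Theta^n$, which follows since $\Theta$ is an ample divisor on an abelian variety of dimension $\ge 2$, hence connected by the Lefschetz hyperplane theorem (or Kodaira vanishing), and a smooth connected projective variety is irreducible; (iii) projectivity, which is automatic as $\Theta^n$ is a closed subvariety of the projective variety $A^{n+1}$. I expect the main obstacle — or rather the only subtle point — to be establishing that the cobordism class $[\Theta^n]$ is \emph{independent} of the chosen abelian variety $A^{n+1}$ within the smooth locus: this is handled by noting that the family of such abelian varieties (with their theta divisors) is connected, so all smooth theta divisors are cobordant, and in any case the characteristic-number computation above already shows the class depends only on $n$. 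Once the single number $\langle x^n,[\Theta^n]\rangle = (n+1)!$ is in hand, matching with the Chern-Dold series \eqref{CD}–\eqref{CDnew} is immediate.
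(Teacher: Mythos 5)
Your proposal is correct and rests on exactly the same key computation as the paper's proof: since $A^{n+1}$ has trivial tangent bundle, the stable normal bundle of $\Theta^n$ is the line bundle $i^*L$, so the only nonvanishing normal Chern number is $c^\nu_{(n)}(\Theta^n)=\langle x^n,[\Theta^n]\rangle=\langle D^{n+1},[A^{n+1}]\rangle=(n+1)!$ --- this is precisely Lemma~3.2. Where you differ is in how this is promoted to the identity $[\Theta^n]=[\mathcal B^{2n}]$: you invoke Lagrange inversion against the Mischenko series, i.e.\ you argue that $z+\sum[\Theta^n]z^{n+1}/(n+1)!$ is the compositional inverse of $u+\sum[\mathbb CP^n]u^{n+1}/(n+1)$, whereas the paper goes through the cobordism Todd class, first proving $Td_U(\xi)=\sum_\lambda c_\lambda(-\xi)[\Theta^\lambda]/(\lambda+1)!$ by verifying the two defining properties of $Td_U$ on the generators $[\Theta^n]$ via the Lemma, and then applying the fundamental relation $Td_U(\eta)\,ch_U(u)=z$ to read off $ch_U(u)=\beta(z)$. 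These routes are equivalent in content (the characteristic series $Q(z)=z/\beta(z)$ of the Todd class encodes precisely the exponential/logarithm duality you use), but the paper's is tighter logically because the relation $Td_U(\eta)ch_U(u)=z$ is already available from \cite{B-1970}, while your ``standard computation'' should be fleshed out: one should note that for any Hirzebruch genus $\Phi$, the fact that $\nu\Theta^n$ is a line bundle gives $\Phi(\Theta^n)=(n+1)!\cdot[z^n]\,Q_\Phi(z)^{-1}$, so $z+\sum_n\Phi(\Theta^n)\tfrac{z^{n+1}}{(n+1)!}=z/Q_\Phi(z)$ is the exponential of the formal group of $\Phi$, and then apply this to the universal genus $\Omega_U\hookrightarrow\Omega_U\otimes\mathbb Q$. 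With that step made explicit, your argument is a complete and valid variant of the paper's proof.
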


As a corollary we have the following representation of the cobordism class of any $U$-manifold $M^{2n}$ in terms of the theta divisors:
\beq{decom}
[M^{2n}]=\sum_{\lambda: |\lambda|=n}c^{\nu}_\lambda(M^{2n})\frac{[\Theta^\lambda]}{(\lambda+1)!},
\eeq
where the sum is over all partitions $\lambda=(i_1,\dots, i_k)$ of $|\lambda|=i_1+\dots +i_k=n,$ 
\beq{prodt}
\Theta^\lambda:=\Theta^{i_1}\times\dots \times\Theta^{i_k},
\eeq
$(\lambda+1)!:=(i_1+1)!\dots(i_k+1)!$ and $c^{\nu}_\lambda(M^{2n})\in \mathbb Z$ are the Chern numbers of $M^{2n}$ corresponding to the normal bundle $\nu(M^{2n})$ (see the next section for details).

As another corollary we have the following explicit expression of the exponential generating function of any Hirzebruch genus $\Phi$ of theta divisors:
\beq{hirzeb}
\Phi(\Theta, z):=\sum_{n=1}^\infty\Phi(\Theta^n)\frac{z^{n+1}}{(n+1)!} = \frac{z}{Q(z)},
\eeq
where $Q(z)=1+\sum_{n\in \mathbb N}a_nz^n$ is the characteristic power series of Hirzebruch genus $\Phi$ (see Section 3 for details).

In particular, for the Todd genus we have $Q(z)=\frac{z}{1-e^{-z}}$, so
$$Td(\Theta, z):=\sum_{n=1}^\infty Td(\Theta^n)\frac{z^{n+1}}{(n+1)!} = 1-e^{-z}=\sum_{n\in \mathbb N}(-1)^n\frac{z^{n+1}}{(n+1)!},$$
so that the Todd genus of the theta divisors is
\beq{todd1}
Td (\Theta^n)=(-1)^n
\eeq
 (cf. \cite{B-1970}).
%Since according to \cite{B-1970} the Todd genus $Td(\mathcal B^{2n})=(-1)^n$ 
Thus we have the following formula for the Todd genus for any $U$-manifold $M^{2n}$
\beq{Todd}
Td(M^{2n})=\sum_{\lambda: |\lambda|=n}c^{\nu}_\lambda(M^{2n})\frac{(-1)^n}{(\lambda+1)!}.
\eeq
Since the Todd genus is integer, this implies the divisibility condition on the Chern numbers $c^{\nu}_\lambda(M^{2n})$ of $U$-manifolds. As it follows from the results of Stong \cite{Stong-65} and Hattori  \cite{Hat-66}, all divisibility conditions for $U$-manifolds one can get applying to formula (\ref{decom}) the Landweber-Novikov operations and taking the Todd genus (see the discussion in the last section).

The action of the Landweber-Novikov operations on the theta divisors can be described explicitly in the following way.

Let $\lambda=(i_1,\dots, i_k)$ be a partition of $|\lambda|:=i_1+\dots +i_k$ with $(k)=(k,0,\dots,0)$ being a one-part partition. Let $S_\lambda[M]$ be the result of the action of the Landweber-Novikov operation $S_\lambda$ on $U$-manifold $M$ defined in terms of its stable normal bundle (see \cite{L-1967, Nov-67}). 
Consider a smooth complete intersection 
\beq{inters}
\Theta_{k}^{n-k}=\Theta^n\cap \Theta^n(a_1) \cap \dots \Theta^n(a_k)
\eeq
of $\Theta^n$ with $k$ general translates $\Theta^n(a_i), \, a_i \in A^{n+1}$ of the theta divisor $\Theta^n,$ which for $k<n$ is an irreducible algebraic variety (see Section 3 below).

Let $D=c_1(L)\in H^2(A^{n+1}, \mathbb Z)$ be the first Chern class of the principal polarisation bundle $L$, which is the cohomology class Poincar\'e dual to the cycle defined by $\Theta^n \subset A^{n+1}$. Then $\Theta_k^{n-k}$ is a realisation of the homology class from $H_{2n-2k}(A^{n+1}, \mathbb Z)$, which is Poincar\'e dual to $D^{k+1} \in H^{2k+2}(A^{n+1}, \mathbb Z).$ 
%The embedding $\Theta^n \to A^{n+1}$ determines the bordism class $\{\Theta^n\}\in U_{2n}(A^{n+1})$, which is Poincar\'e-Atiyah dual to the geometric cobordism class $[D] \in U^2(A^{n+1})$
%uniquely defined by $c_1(L) \in H^2(A^{n+1}, \mathbb Z)$, see section 4. 
%The cobordism class $[\Theta_k^{n-k}]\in \Omega_U^{2k -2n}$ is the image of the Gysin homomorphism $f_\sharp$ (see formula (\ref{gysin}) below) with $f$ mapping $A^{n+1}$ to a point:
%\beq{tk}
%[\Theta_k^{n-k}]=f_\sharp ([D]^{k+1}).
%\eeq

Note that for a general principally polarised abelian variety the cohomology class $D^p$ generates the corresponding Hodge group $H^{2p}_{Hodge}(A^{n+1})=H^{2p}(A^{n+1},\mathbb Q)\cap H^{p,p}(A^{n+1})$ for all $p$, due to Mattuck \cite{mat}, who proved the Hodge $(p,p)$-conjecture in this case (see section 17.4 in \cite{BL}).

\begin{Theorem}
Let $\lambda$ be a partition with $|\lambda| < n$ and $S_\lambda$ be the corresponding Landweber-Novikov operation, then the cobordism class $S_\lambda[\mathcal B^{2n}]$ has a smooth irreducible algebraic representative.
More precisely, if $\lambda$ is not a one-part partition, then $S_\lambda[\mathcal B^{2n}]=S_\lambda[\Theta^n]=0,$ while for $\lambda=(k), \, k\leq n$ we have
\beq{LN}
S_{(k)}[\mathcal B^{2n}]=S_{(k)}[\Theta^n]=[\Theta_{k}^{n-k}].
\eeq
\end{Theorem}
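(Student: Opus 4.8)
The plan is to reduce the statement to a computation with the normal bundle of $\Theta^n$, using that an abelian variety is parallelisable.

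By Theorem 1.1 we have $[\mathcal B^{2n}]=[\Theta^n]$, so it is enough to compute $S_\lambda[\Theta^n]$. Recall the standard description of the Landweber--Novikov operations through the stable normal bundle (see \cite{L-1967, Nov-67}): writing $f_M\colon M\to pt$ for the Gysin map in $U^*$ and $\bar\nu_M$ for the stable normal bundle of a $U$-manifold $M^{2n}$, one has $S_\lambda[M]=(f_M)_*\bigl(s_\lambda^U(\bar\nu_M)\bigr)$, where $s_\lambda^U$ is the cobordism characteristic class attached to the monomial symmetric function $m_\lambda$; equivalently, the total operation sends a cobordism Euler class $e$ to $e\cdot\prod_i\bigl(1+\sum_{k\ge1}t_k x_i^k\bigr)=e\cdot\sum_\lambda t_\lambda\, m_\lambda(x)$, with $x_i$ the cobordism Chern roots. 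The key geometric point is that $TA^{n+1}$ is trivial, so $T\Theta^n\oplus\nu=TA^{n+1}|_{\Theta^n}$ is trivial and the stable normal bundle of $\Theta^n$ is stably isomorphic to $\nu=\mathcal O_{A^{n+1}}(\Theta^n)|_{\Theta^n}=L|_{\Theta^n}$, a genuine line bundle; its cobordism Chern roots are $(u,0,0,\dots)$ with $u:=j^*c_1^U(L)$, $j\colon\Theta^n\hookrightarrow A^{n+1}$. Hence $m_\lambda$ of these roots vanishes unless $\lambda=(k)$ is a one-part partition, in which case it equals $u^k$. This already gives $S_\lambda[\Theta^n]=0$ for every $\lambda$ that is not of one-part type, and $S_{(k)}[\Theta^n]=(f_{\Theta^n})_*(u^k)$.

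It remains to identify $(f_{\Theta^n})_*(u^k)$ with $[\Theta_k^{n-k}]$. Put $\tilde u=c_1^U(L)\in U^2(A^{n+1})$. Since $\Theta^n$ is the divisor of a holomorphic section of $L$, the cobordism class $j_*1$ of $\Theta^n$ in $U^2(A^{n+1})$ equals $\tilde u$, and $j^*\tilde u=u$; so by the projection formula $j_*(u^k)=j_*\bigl(j^*(\tilde u^k)\bigr)=\tilde u^k\cdot j_*1=\tilde u^{k+1}$, whence $(f_{\Theta^n})_*(u^k)=(f_{A^{n+1}})_*(\tilde u^{k+1})$. On the other hand $\Theta_k^{n-k}$ is, by construction, the transverse intersection of the smooth divisors $\Theta^n,\Theta^n(a_1),\dots,\Theta^n(a_k)$, each $\Theta^n(a_i)$ being the divisor of a section of $t_{a_i}^*L$; since translations act trivially on $H^2(A^{n+1};\mathbb Z)$, the bundle $t_{a_i}^*L$ is isomorphic to $L$ as a topological (equivalently smooth) complex line bundle, so $c_1^U(t_{a_i}^*L)=\tilde u$. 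Therefore the cobordism Poincar\'e dual of $\Theta_k^{n-k}$ in $A^{n+1}$ is $c_1^U(L)\cdot\prod_{i=1}^k c_1^U(t_{a_i}^*L)=\tilde u^{k+1}$, and $[\Theta_k^{n-k}]=(f_{A^{n+1}})_*(\tilde u^{k+1})$. Combining the two computations gives $S_{(k)}[\Theta^n]=[\Theta_k^{n-k}]$, and with Theorem 1.1 also $S_{(k)}[\mathcal B^{2n}]=[\Theta_k^{n-k}]$. Finally, $\Theta_k^{n-k}$ is a smooth complete intersection of ample divisors of dimension $n-k\ge1$ in the smooth projective variety $A^{n+1}$, hence connected by the Lefschetz hyperplane theorem, so it is smooth and irreducible — the required algebraic representative.

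The step I expect to be the main obstacle is making the first paragraph fully precise: fixing the exact sign and combinatorial normalisation of the formula $S_\lambda[M]=(f_M)_*\bigl(s_\lambda^U(\bar\nu_M)\bigr)$ and the vanishing of $s_\lambda^U$ on a stably one-dimensional normal bundle. Everything here hinges on using the normal (rather than tangential) convention for the Landweber--Novikov operations, under which the relevant characteristic classes are honestly those of the line bundle $L|_{\Theta^n}$; with the tangential convention one would instead meet the formal inverse series $[-1]_F(u)$ and the vanishing would be obscured. The remaining ingredients — parallelisability of $A^{n+1}$, the projection formula in complex cobordism, transversality of general translates (Kleiman), and connectedness of complete intersections of ample divisors — are standard. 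One should also remark that the resulting cobordism class does not depend on the choice of $A^{n+1}$ or of the translates $a_i$: this is automatic from the displayed chain of equalities together with the cobordism invariance of $[\Theta^n]$ noted after \cite{AM67}.
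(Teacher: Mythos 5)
Your proof is correct and follows essentially the same route as the paper: reduce to the normal bundle of $\Theta^n$ being (stably) the line bundle $\mathcal L=i^*L$ since $A^{n+1}$ is parallelisable, so $c_\lambda^U(\nu\Theta^n)$ vanishes for multi-part $\lambda$, then use $i_\sharp(1)=c_1^U(L)$ and the projection formula to identify $S_{(k)}[\Theta^n]$ with the pushforward of $c_1^U(L)^{k+1}$, which is Poincar\'e dual to the transverse intersection $\Theta_k^{n-k}$. The "obstacle" you flag is in fact exactly the normal-bundle convention the paper uses (their formula (\ref{slm})), so there is no gap; your closing remarks on irreducibility via Lefschetz and on independence of the choice of $A^{n+1}$ match the paper's separate remarks as well.
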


%As a corollary we have the following formula for the cobordism class of $\Theta_{k}^{n-k}$ as the residue at zero:
%\beq{res}
%[\Theta_{k}^{n-k}]=\frac{(n+1)!}{2\pi i} \oint \beta(z)^{k+1} \frac{dz}{z^{n+2}},
%\eeq
%where $\beta(z)$ is given by (\ref{CDnew}). 
As a corollary we have the following expression for $[\Theta_{k}^{n-k}]$ as a residue at zero
\beq{res}
[\Theta_{k}^{n-k}]=\frac{(n+1)!}{2\pi i} \oint \beta(z)^{k+1} \frac{dz}{z^{n+2}},
\eeq
with $\beta(z)$ given by (\ref{CDnew}). Moreover, the cobordism class $[\Theta_{k}^{n-k}]$ is a polynomial of $[\Theta^1], \dots, [\Theta^{n-k}]$ with positive integer coefficients, which implies that the polynomial subring $\Theta_U \subset \Omega_U$ generated by the theta-divisors:
\beq{thetau}
\Theta_U=\mathbb Z[t_1,t_2,\dots], \quad t_k=[\Theta^k], \, k \in \mathbb N
\eeq
is invariant under the Landweber-Novikov operations (see section 4 below).

We use the Landweber-Novikov algebra $S$ over $\mathbb Q$, which is a graded Hopf algebra generated as vector space by $S_\lambda$, to define the  {\it quantum complex cobordism theory} as the extraordinary cohomology theory $$QU^*=U^*\otimes S^*$$ with $QU^*(pt)=Q\Omega^*:=\Omega_U\otimes   S^*$, where $S^*=Hom(S, \mathbb Q)$ is the dual  Landweber-Novikov algebra, considered here as the deformation parameter space. The cohomology theory $QU^*$ admits a geometric realisation as double cobordism theory $DU^*$, which was introduced in \cite{B-1995} in relation with Drinfeld's quantum double and studied in more detail in \cite{BR}.

From the results of Landweber \cite{L-1967} and Novikov \cite{Nov-67} (see also \cite{BSh}) it follows that there is a canonical isomorphism of algebras 
$
\sigma: S^*\cong \Omega_U\otimes \mathbb Q.
$
We show that the image of the dual basis $S^\lambda \in S^*$ can be given explicitly as
\beq{so}
\sigma(S^\lambda)=\frac{[\Theta^\lambda]}{(\lambda+1)!},
\eeq
where $\Theta^\lambda$ are the products of theta divisors (\ref{prodt}) with the cobordism classes $[\Theta^\lambda]$ giving the canonical basis in our ring $\Theta_U.$

Inspired by constructions from \cite{B-1995}), we introduce the quantisation map 
$
q^\star: U^*(X)\to QU^*(X)=U^*(X)\otimes S^*
$
as
\beq{quant}
q^\star(x)=x\otimes 1+\sum_{\lambda}S_\lambda(x)\otimes S^\lambda \in QU^*(X), \quad x \in U^*(X),
\eeq
where the sum here is over all non-empty partitions.

Define also a dequantisation map
\beq{dequant}
\mu^\star: U^*(X)\otimes S^* \to H^*(X, \Omega_U \otimes \mathbb Q), \quad \mu^\star=\mu\otimes \sigma,
\eeq
where $\mu: U^*(X) \to H^*(X,\mathbb Z)$ is the cycle realisation homomorphism, which is defined uniquely by the property 
$
\mu(u)=z
$ 
for the same $u \in U^2(\mathbb CP^\infty)$ and $z\in H^2(\mathbb CP^\infty, \mathbb Z)$ as before.

This allows us to interpret Chern-Dold character in complex cobordisms as the composition
$$
U^*(X)\stackrel{q^\star}{\longrightarrow}U^*(X)\otimes S^* \stackrel{\mu^\star}{\longrightarrow} H^*(X, \Omega_U \otimes \mathbb Q).
$$

\begin{Theorem}
The Chern-Dold character in the complex cobordisms is the composition of quantisation and dequantisation maps:
\beq{chmuint}
ch_U=\mu^\star \circ q^\star.
\eeq
\end{Theorem}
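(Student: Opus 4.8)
The plan is to verify the identity $ch_U = \mu^\star \circ q^\star$ on generators, using the fact that $ch_U$ is uniquely determined by its value on the first Chern class $u \in U^2(\mathbb{CP}^\infty)$ together with multiplicativity and naturality. Since $q^\star$ is a ring homomorphism (this follows from the Hopf-algebra structure of $S$ and the Cartan-type formula for the total Landweber--Novikov operation $S_t = \sum_\lambda S_\lambda \otimes S^\lambda$, which is the content of the construction in \cite{B-1995}), and $\mu^\star = \mu \otimes \sigma$ is manifestly multiplicative, the composition $\mu^\star \circ q^\star$ is a multiplicative natural transformation of cohomology theories. So it suffices to check that it agrees with $ch_U$ on $u$ and that it induces the canonical homomorphism on the coefficient ring $\Omega_U = U^*(pt)$.

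First I would compute $(\mu^\star \circ q^\star)(u)$. By definition $q^\star(u) = u \otimes 1 + \sum_\lambda S_\lambda(u) \otimes S^\lambda$. The Landweber--Novikov operations act on the first Chern class $u$ of the universal line bundle by the classical formula $S_\lambda(u) = 0$ unless $\lambda = (k)$ is a one-part partition, in which case $S_{(k)}(u) = u^{k+1}$ (this is essentially the definition of the $S_{(k)}$ via the universal formal group, as in \cite{Nov-67, L-1967}). Applying $\mu^\star$ and using $\mu(u) = z$, $\mu(u^{k+1}) = z^{k+1}$, together with $\sigma(S^{(k)}) = [\Theta^k]/(k+1)!$ from \mref{so}, I get
$$
(\mu^\star \circ q^\star)(u) = z + \sum_{k=1}^\infty z^{k+1}\,\frac{[\Theta^k]}{(k+1)!} = \beta(z),
$$
which is exactly $ch_U(u)$ by \mref{CDnew} and Theorem 1.1 identifying $[\Theta^k]$ with $[\mathcal{B}^{2k}]$. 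This is the crux of the argument and the step where all the earlier results are brought to bear.

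Next I would check the coefficient ring: for $X = pt$, the map $\mu: U^*(pt) \to H^*(pt,\mathbb{Z})$ sends $\Omega_U$ to $\mathbb{Z}$ in degree zero and kills everything in negative degree, while $\sigma$ identifies $S^*$ with $\Omega_U \otimes \mathbb{Q}$ via \mref{so}. I need that $\mu^\star \circ q^\star$ restricted to $\Omega_U$ is the canonical map $\Omega_U \to \Omega_U \otimes \mathbb{Q}$ of \mref{CDpt}; this comes down to the statement that for a class $x \in \Omega_U$ of degree $-2n$, the term $\sum_{|\lambda| = n} S_\lambda(x) \otimes S^\lambda$ reassembles, after applying $\sigma$, to the image of $x$ in $\Omega_U \otimes \mathbb{Q}$ — which is precisely the content of the isomorphism $\sigma: S^* \cong \Omega_U \otimes \mathbb{Q}$ dual to the Landweber--Novikov action, i.e. the $s$-number / Chern-number pairing. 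Finally I would invoke the uniqueness of $ch_U$ among multiplicative natural transformations with the prescribed behaviour on $u$ (stated in the excerpt after \mref{CDX}) to conclude $ch_U = \mu^\star \circ q^\star$ on all finite $CW$-complexes, hence everywhere.

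The main obstacle I anticipate is not the computation on $u$ but the careful bookkeeping needed to show $q^\star$ is genuinely multiplicative with values in $U^\star = U^* \otimes S^*$ and that the pairing conventions make $\sigma(S^\lambda) = [\Theta^\lambda]/(\lambda+1)!$ consistent with the coproduct on $S$ (so that products of theta divisors on the cohomology side correspond to products of dual basis elements $S^\lambda$ on the Hopf-algebra side). In other words, the delicate point is matching normalisations between the three structures — the Landweber--Novikov Hopf algebra $S$, its dual $S^* \cong \Omega_U \otimes \mathbb{Q}$, and the geometric basis $[\Theta^\lambda]$ — so that the diagram commutes on the nose and not merely up to a grading-dependent scalar. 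Once that is pinned down, everything else is formal.
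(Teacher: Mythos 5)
Your proof is correct and follows essentially the same strategy as the paper: establish multiplicativity of $q^\star$ from the Cartan formula \mref{multS} for the Landweber--Novikov operations, then verify the identity on the geometric class $u = c_1^U(\eta) \in U^2(\mathbb{C}P^\infty)$ using $S_{(k)}(u)=u^{k+1}$, $\mu(u)=z$, $\sigma(S^{(k)})=[\Theta^k]/(k+1)!$, and Theorem 3.5. The additional paragraph verifying the behaviour on the coefficient ring is a harmless confirmation but is not part of the paper's (shorter) argument, which relies on the uniqueness of $ch_U$ determined by its value on $u$ alone.
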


For $X=pt$ this composition is the canonical embedding $\Omega_U \to \Omega_U\otimes \mathbb Q$, but even in this case this leads to a non-trivial formula (\ref{decom}) (see Section 4).

We should mention here very interesting work by Coates and Givental \cite{CG, CG2}, who considered an analogue of quantum cohomology with the corresponding Gromov-Witten invariants \cite{Witten} taking values in complex cobordisms. Some important relations of complex cobordisms with conformal field theory and integrable systems were discussed also in \cite{KSU, Krichever, Morava}.
%a different quantisation of the complex cobordism theory motivated by the Gromov-Witten theory. We plan to discuss its relation with our quantisation elsewhere.

We consider also the most degenerate case of abelian variety $A^{n+1}=\mathcal E^{n+1}$, where $\mathcal E$ is an elliptic curve. In that case the theta-divisor is singular, but we show that there is a smooth real-analytic representative of the same homology class in $H_{2n}(A^{n+1}, \mathbb Z)$. More precisely, we have the following result (see more details in Section 5).

\begin{Theorem}
There is a smooth real-analytic $U$-manifold $\mathcal M_W^{2n} \subset \mathcal E^{n+1}$ given in terms of classical Weierstrass functions, which can be used as a representative of the cobordism class $[\Theta^{n}]$.

For every $k>1$ the cobordism class $k^{n+1}[\Theta^{n}]$ can be realised by an irreducible algebraic subvariety of $\mathcal E^{n+1}.$
\end{Theorem}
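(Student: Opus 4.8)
The two statements are proved separately, but both rest on the same mechanism: $\mathcal E^{n+1}$ is complex‑parallelisable, so the complex cobordism class of a codimension‑two submanifold is governed by a single intersection number. For the first statement the key obstruction is that, with the \emph{product} principal polarisation, the polarisation bundle $L$ on $\mathcal E^{n+1}$ has $h^0(L)=1$: the only effective divisor in the class $D=c_1(L)$ is the singular union $\bigcup_i\{z_i=0\}$, so no smooth \emph{algebraic} hypersurface represents $D$ and one has to leave the holomorphic category. I would begin with the following general remark. If $\mathcal M^{2n}\subset\mathcal E^{n+1}$ is any smooth oriented real‑analytic submanifold of real codimension two homologous to the theta divisor, then its normal bundle $\nu$ is a complex line bundle with $c_1(\nu)=D|_{\mathcal M}$; since $T\mathcal E^{n+1}$ is complex‑trivial, $\mathcal M$ acquires a canonical stably complex structure with stable normal bundle $\nu\oplus\underline{\mathbb C}^{N}$, and all of its normal Chern numbers are of the shape $\int_{\mathcal E^{n+1}}(\text{monomial in }D)\cdot D$, which vanish except for $c_1^{\,n}[\mathcal M]=D^{n+1}=(n+1)!$. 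These coincide with the normal Chern numbers of the smooth theta divisor $\Theta^n$ of a general principally polarised abelian variety (which also sits in a complex‑parallelisable ambient variety), so by Milnor's theorem $[\mathcal M]=[\Theta^n]$ in $\Omega_U$. It therefore suffices to produce one such $\mathcal M$ explicitly in terms of classical Weierstrass functions.

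For this I would modify the Weierstrass $\zeta$‑function: by the Legendre relation there are unique constants $a,b\in\mathbb C$ for which $\tilde\zeta(z):=\zeta(z)-az-b\bar z$ is doubly periodic, hence a real‑analytic function on $\mathcal E$ with a single simple pole at the origin, so that $s_1:=\sigma\cdot\tilde\zeta$ extends to a real‑analytic — and genuinely non‑holomorphic — section of the degree‑one bundle $L_0$ on $\mathcal E$ which is nonzero at the origin. Starting from the holomorphic section $\prod_i\sigma(z_i)$ of $L=p_1^*L_0\otimes\cdots\otimes p_{n+1}^*L_0$ and adding small non‑holomorphic corrections built from $s_1$ and elliptic functions, I would set $\mathcal M_W^{2n}=\{s=0\}$ for a real‑analytic section $s$ of $L$ of the form $\prod_i\sigma(z_i)+(\text{small non-holomorphic correction})$. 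Off $\bigcup_i\{z_i=0\}$ this is a small perturbation of the smooth locus of the degenerate theta divisor; along a stratum of the crossing locus $s$ has, in suitable local coordinates, the model form $\prod_{i\in I}z_i+(\text{nonzero constant})$, whose zero set is smooth; and using enough correction parameters one arranges by parametric transversality that $s$ is transverse to the zero section. Then $\mathcal M_W^{2n}$ is smooth, real‑analytic, homologous to $D$, and hence represents $[\Theta^n]$ by the remark above. Carrying out this transversality/smoothness step — i.e.\ arranging that the Weierstrass‑type section has a nondegenerate zero scheme \emph{everywhere}, and not merely away from the singular locus of the degenerate theta divisor — is the step I expect to be the main obstacle.

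For the second statement, fix $k\geq2$; then $kD$ is ample, $h^0(L^k)=k^{n+1}$, and $|L^k|$ is base‑point free for $k\geq2$ (Lefschetz). By Bertini a general member $X_k\in|L^k|$ is smooth, and it is connected because the exact sequence $0\to\mathcal O(-kD)\to\mathcal O\to\mathcal O_{X_k}\to0$ together with Serre duality and Mumford's vanishing $H^i(L^k)=0$ $(i>0)$ gives $h^0(\mathcal O_{X_k})=h^0(\mathcal O_{\mathcal E^{n+1}})=1$; hence $X_k$ is a smooth irreducible projective subvariety. Since $X_k$ is homologous to $kD$ with normal bundle $L^k|_{X_k}$, the computation above gives $c_1^{\,n}[X_k]=(kD)^{n+1}=k^{n+1}(n+1)!$ and all other normal Chern numbers zero, so $[X_k]=k^{n+1}[\Theta^n]$. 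Explicitly, $X_k$ may be taken as the zero locus of a general $\mathbb C$‑linear combination of the classical level‑$k$ theta functions (products of translated Weierstrass $\sigma$‑functions). The hypothesis $k>1$ is essential here precisely because $h^0(L)=1$, so the argument cannot start at $k=1$; this contrast — the class $[\Theta^n]$ is realisable on $\mathcal E^{n+1}$ only real‑analytically, while $k^{n+1}[\Theta^n]$ is realisable algebraically for every $k\geq2$ — is the point of the statement.
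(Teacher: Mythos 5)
Your overall strategy is the same as the paper's: both hinge on the doubly--periodic modified Weierstrass zeta function (your $\tilde\zeta$ is, up to sign conventions, the paper's $\xi(z)=\zeta(z)+az+b\bar z$ with $a,b$ pinned down by the Legendre relation) and on the fact that $\sigma\cdot\xi$ is a real--analytic section of the degree--one bundle on $\mathcal E$ that is nonvanishing near the origin, because $\xi(z)=1/z+\mathcal O(z)$ cancels the simple zero of $\sigma$. Your deduction that $[\mathcal M]=[\Theta^n]$ by computing the normal Chern numbers ($c^\nu_\lambda=0$ except $c^\nu_{(n)}=D^{n+1}=(n+1)!$, then Milnor's theorem) is in fact slightly cleaner than the paper's remark that the two are cobordant as ``smooth zero loci of sections of the same line bundle,'' since the genuine theta divisor of $\mathcal E^{n+1}$ is singular and the comparison has to route through a general $A^{n+1}$. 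Your part two (Bertini for smoothness, vanishing for connectedness, $(kD)^{n+1}=k^{n+1}(n+1)!$) also matches the paper's Theorem 4.9 and its consequence; the paper gets irreducibility from $b_0=1$ via Lefschetz, equivalent to your connectedness argument.

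The genuine gap is exactly the step you flag: producing a nondegenerate zero scheme \emph{everywhere}, not just off the crossing locus. This is not a soft perturbation statement, and the paper's proof is precisely the missing content. The paper writes down an explicit finite--dimensional family
$$S(u,a)=S_0(u)+S_0(u)\sum_{I\cap J=\emptyset}a_{IJ}\bigl(\xi_I(u)+\xi_J(u)\bigr),\qquad \xi_I(u)=\prod_{i\in I}\xi(u_i),$$
over pairs of disjoint subsets $I,J\subseteq\{1,\dots,n+1\}$ not both empty, and considers the universal zero set $\mathcal M=\{(u,a):S(u,a)=0\}$. The decisive computation is that at any $(u,a)\in\mathcal M$, taking $I$ to be exactly the indices with $\sigma(u_i)=0$ and $J$ those with $\xi(u_j)=0$ (disjoint, since $\sigma\xi\neq 0$ at $0$), one has $\partial S/\partial a_{IJ}=S_0(u)\,\xi_I(u)\neq 0$; hence $\mathcal M$ is smooth, and Sard's lemma applied to $\mathcal M\to\mathbb C^N$ gives smoothness of $\mathcal M_W(a)$ for generic $a$. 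Without a family tailored like this, a ``small non--holomorphic correction'' need not be transverse along the strata; the local model $\prod_{i\in I}z_i+\text{const}$ you invoke only appears because the parameter $a_{I,\emptyset}$ is built in precisely to supply the nonzero constant on the stratum labelled by $I$. So your proposal identifies the right mechanism, but the transversality argument you defer is the actual proof, and it requires the explicit combinatorial family of corrections rather than an appeal to generic position.
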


The structure of the paper is following. We start with a review of the main notions and results in complex cobordism theory, including the Chern-Dold character and Riemann-Roch-Grothendieck-Hirzebruch theorem. 

In the central section 3 we describe the topological properties of the smooth theta divisors and use them to express explicitly the Todd class and Chern-Dold character in complex cobordism theory. We introduce also the corresponding dual complex bordism classes and study their properties. 

In section 4 we discuss the Landweber-Novikov algebra and use its dual algebra as the deformation parameter space for certain quantisation of the complex cobordism theory, giving a different interpretation of the Chern-Dold character. We describe the action of the Landweber-Novikov operations on the cobordism classes of the theta divisors in terms of the algebraic cycles in general abelian varieties.

In section 5 we present some real-analytic representatives of the cobordism classes of theta divisors written explicitly in terms of the classical Weierstrass sigma and zeta functions. 

In the last section we discuss the link with the Milnor-Hirzebruch problem about description of possible characteristic numbers of the smooth irreducible algebraic varieties.

\section{Complex bordisms and cobordisms}

We present now a brief review of the complex cobordism theory, referring for the details to Stong's lecture notes \cite{Stong-68}, or, for more algebraic view, to Quillen's work \cite{Quil-71}. For the theory of the characteristic classes in cohomology theory we refer to Milnor and Stasheff \cite{MS}, in $K$-theory - to Atiyah \cite{Atiyah} and in cobordism theory - to Conner and Floyd \cite{CF,CF2}, for relations with the theory of algebraic cobordisms \cite{LM} - to Panin et al \cite{PPR} and survey \cite{B-2012}.

Let $M^m$ be a smooth closed real oriented manifold. By {\it stable complex structure} (or, simply $U$-{\it structure}) on $M^m$ we mean an isomorphism of the real oriented vector bundles
$TM^m\oplus (2N-m)_{\mathbb R}\cong r\xi,$
where $TM^m$ is the tangent bundle of $M^m$, $(2N-m)_{\mathbb R}$ is trivial naturally oriented real $(2N-m)$-dimensional bundle over $M^m$, $\xi$ is a complex vector bundle over $M^m$ and $r\xi$ is its real form. A manifold $M^m$ with a chosen $U$-structure is called $U$-{\it manifold}.
 Note that a complex structure on $\xi$ %in the stable tangent bundle $TM^m$ 
 determines complex structure in the stable normal bundle $\nu M^m.$
 
 Two closed smooth real oriented $m$-dimensional $U$-manifolds $M_1$ and $M_2$ are called {\it $U$-cobordant} if there exists a real $(m+1)$-dimensional $U$-manifold $W$ with boundary such that the boundary $\partial W$ is a disjoint union of $M_1^m$ with given orientation and $M_2^m$ with the opposite orientation, and such that the restriction of the stable complex normal bundle $\nu W$ to $M_i$ coincides with the stable complex normal bundles $\nu M_i, \, i=1,2.$ The notion of bordisms of $U$-manifolds is a bit more involved, see details in \cite{CF2}, Ch. 1. 
 
%Two closed smooth real $m$-dimensional manifolds $M_1$ and $M_2$ are called {\it bordant} if there exists a real $(m+1)$-dimensional $U$-manifold $W$ {\color{red} with boundary} such that the boundary $\partial W$ is a disjoint union of $M_1$ and $M_2$ and the restriction of the stable tangent bundle $TW$ to $M_i$ is stably equivalent to $TM_i, \, i=1,2.$
 
% Similarly, two closed smooth real $m$-dimensional manifolds $M_1$ and $M_2$ are called {\it cobordant} if there exists a real $(m+1)$-dimensional $U$-manifold $W$ {\color{red} with boundary} such that the boundary $\partial W$ is a disjoint union of $M_1^m$ and $M_2^m$ and the restriction of the stable normal bundle $\nu W$ to $M_i$ coincides with $\nu M_i, \, i=1,2.$

 Define the following operations in the corresponding equivalence classes of $U$-manifolds.
 The sum of bordism classes of two closed $U$-manifolds $M_1^m$ and $M_2^m$ is defined as
 $
 [M^m_1]+[M^m_2]=[M^m_1\cup M^m_2],
 $
where $M^m_1\cup M^m_2$ is the disjoint union of $M_1^m$ and $M_2^m$.
 Similarly define the product of the bordism classes of $M_1^{m_1}$ and $M_2^{m_2}$ by
 $
 [M_1^{m_1}] [M_2^{m_2}]=  [M_1^{m_1}\times M_2^{m_2}],
 $
 where $M_1^{m_1}\times M_2^{m_2}$ is the corresponding direct product. 
 This defines the commutative graded ring $\Omega^U=\sum_{m\geq 0}\Omega_m^U,$ where $\Omega_m^U$ is the group of bordism classes of $m$-dimensional $U$-manifolds. 
 Similarly, we have the graded ring $\Omega_U=\sum_{m\geq 0}\Omega^{-m}_U$, where $\Omega^{-m}_U$ is the group of cobordism classes of $m$-dimensional $U$-manifolds. 
 
From the correspondence between stable complex structures in tangent and normal bundles it follows that the groups $\Omega_m^U$ and $\Omega^{-m}_U$ and the rings $\Omega_U$ and  $\Omega^U$ are isomorphic. This isomorphism can be extended to Poincare duality between complex bordisms and cobordisms for any $U$-manifold.

Let $\lambda=(i_1, \dots, i_k), \, i_1 \geq \dots \geq i_k$ be a partition of $n=i_1+\dots+i_k$, and $p(n)$ be the number of such partitions.
Using the standard splitting principle  one can define the Chern classes $c_\lambda(TM)\in H^{2n}(M,\mathbb Z)$ of a $U$-manifold $M$ corresponding to the monomial symmetric functions $m_\lambda(t)=t_1^{i_1}\dots t_k^{i_k}+\dots$ (see \cite{MS}). We would like to emphasize the choice of monomial symmetric functions here, which is essential for us.

The {\it Chern number} $c_{\lambda}(M^{2n}), \, |\lambda|=n$ of $U$-manifold $M^{2n}$ is defined as the value of the cohomology class $c_\lambda(TM^{2n})$ on the fundamental cycle $\langle M^{2n} \rangle$:
\beq{cht}
c_{\lambda}(M^{2n}):= (c_\lambda(TM^{2n}), \langle M^{2n} \rangle).
\eeq
We have $p(n)$ Chern numbers $c_{\lambda}(M^{2n})$, which depend only on the bordism class of $M^{2n}.$

The following fundamental result is due to Milnor and Novikov.

\begin{Theorem} (Milnor \cite{Mil-60}, Novikov \cite{Nov-62})
The graded complex bordism ring $\Omega^U$ is isomorphic to the graded polynomial ring $\mathbb Z[y_1, \dots, y_{n},\dots]$ of infinitely many variables $y_{n}, n \in \mathbb N$, where $\deg y_{n}=2n.$
In particular, $\Omega_{2n-1}^U=0.$

Two closed $2n$-dimensional $U$-manifolds $M_1$ and $M_2$ are $U$-bordant if and only if all the corresponding Chern numbers are the same.
\end{Theorem}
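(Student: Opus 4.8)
The plan is to translate the geometric statement into stable homotopy theory and then analyse the relevant Thom spectrum prime by prime. First I would use the Pontryagin--Thom construction to identify $\Omega^U_m$ with $\pi_m(MU)$, where $MU=\varinjlim_n\Sigma^{-2n}MU(n)$ and $MU(n)$ is the Thom space of the universal bundle over $BU(n)$; this turns the theorem into a computation of $\pi_*(MU)$. The Thom isomorphism then gives $H_*(MU;\mathbb{Z})\cong H_*(BU;\mathbb{Z})\cong\mathbb{Z}[b_1,b_2,\dots]$ with $\deg b_i=2i$, the $b_i$ being images of the standard generators of $H_*(\mathbb{CP}^\infty)$; in particular $H_*(MU;\mathbb{Z})$ is free abelian and concentrated in even degrees, and likewise over any field.

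The rational case is then immediate: $MU$ is connective with torsion-free, purely even homology, so the rational Hurewicz theorem for spectra gives $\pi_*(MU)\otimes\mathbb{Q}\cong H_*(MU;\mathbb{Q})=\mathbb{Q}[b_1,b_2,\dots]$. I would make the generators explicit by computing characteristic numbers --- for instance $[\mathbb{CP}^n]$, or the Milnor hypersurfaces $H_{i,j}$, map modulo decomposables to a generator of $H_{2n}$ --- so that there are classes $x_n\in\pi_{2n}(MU)\otimes\mathbb{Q}$ that are polynomial generators.

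The substantive point is to exclude torsion and odd-degree classes, which must be checked at each prime $p$. The key input I would establish is that $H^*(MU;\mathbb{F}_p)$ is a free module over $\mathcal{A}_p/\!\!/E$, where $\mathcal{A}_p$ is the mod-$p$ Steenrod algebra and $E=E(Q_0,Q_1,\dots)\subset\mathcal{A}_p$ is the exterior subalgebra on the Milnor primitives; dually, $H_*(MU;\mathbb{F}_p)$ is an extended comodule over the dual Steenrod algebra along the sub-Hopf-algebra dual to $E$. This follows by combining the Thom isomorphism with the known action of the Steenrod operations on the Thom class and on $H^*(BU;\mathbb{F}_p)$. Feeding this module structure into the mod-$p$ Adams spectral sequence, the $E_2$-page becomes $\operatorname{Ext}_E(\mathbb{F}_p,\mathbb{F}_p)$ tensored with a polynomial algebra on even-degree generators; it is supported in bidegrees that force all differentials to vanish and leave no nontrivial extensions, so $\pi_*(MU)_{(p)}$ is torsion-free, concentrated in even degrees, and polynomial over $\mathbb{Z}_{(p)}$ with the same Poincar\'e series as $\mathbb{Q}[b_1,b_2,\dots]$. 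This is precisely the mod-$p$ analysis carried out independently by Milnor \cite{Mil-60} and Novikov \cite{Nov-62}. (Alternatively, after Quillen one could split $MU_{(p)}$ into suspensions of the Brown--Peterson spectrum $BP$ with $\pi_*BP=\mathbb{Z}_{(p)}[v_1,v_2,\dots]$, though this presupposes the complex orientability of $MU$.)

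Finally I would assemble the pieces. Torsion-freeness makes the integral Hurewicz homomorphism $\pi_*(MU)\hookrightarrow H_*(MU;\mathbb{Z})$ injective; since $\pi_{2n}(MU)$ is finitely generated and its indecomposable quotient is $p$-locally $\mathbb{Z}_{(p)}$ for every $p$, that quotient is $\cong\mathbb{Z}$, and choosing $y_n\in\pi_{2n}(MU)=\Omega^U_{2n}$ generating it gives a ring map $\mathbb{Z}[y_1,y_2,\dots]\to\pi_*(MU)$ which is an isomorphism $p$-locally for all $p$, hence an isomorphism. Therefore $\Omega^U\cong\mathbb{Z}[y_1,y_2,\dots]$ with $\deg y_n=2n$ and $\Omega^U_{2n-1}=0$ (the sharp normalisation $s_n[y_n]=\pm1$, resp.\ $\pm p$ when $n+1=p^k$, then follows from $s_n[\mathbb{CP}^n]=n+1$ together with the $p$-local generators above). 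The hard part will be the prime-by-prime step: pinning down the $\mathcal{A}_p$-module structure of $H^*(MU;\mathbb{F}_p)$ and running the Adams spectral sequence to kill $p$-torsion and odd classes; everything else is formal once that is in hand.
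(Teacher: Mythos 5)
The paper does not prove this theorem --- it states it as a classical foundational result and cites Milnor \cite{Mil-60} and Novikov \cite{Nov-62}, so there is no in-paper argument to compare against. Your outline is a correct reconstruction of the standard argument those references contain: Pontryagin--Thom identifies $\Omega^U_*$ with $\pi_*(MU)$, the Thom isomorphism identifies $H_*(MU;\mathbb{Z})$ with $H_*(BU;\mathbb{Z})=\mathbb{Z}[b_1,b_2,\dots]$, the rational case follows from the Hurewicz theorem, and the integral case is handled prime by prime by establishing the $\mathcal{A}_p$-module structure of $H^*(MU;\mathbb{F}_p)$ and running the mod-$p$ Adams spectral sequence, which collapses and shows $\pi_*(MU)_{(p)}$ is concentrated in even degrees, torsion-free, and polynomial of the correct size. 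One small imprecision: $\mathcal{A}_p/\!\!/E$ is a quotient of $\mathcal{A}_p$, not a subalgebra, and $H^*(MU;\mathbb{F}_p)$ is not literally a module over it; what you mean, and what licenses the change of rings, is that $H^*(MU;\mathbb{F}_p)\cong\mathcal{A}_p\otimes_E C$ is an extended $\mathcal{A}_p$-module induced up from a trivial $E$-module $C$ (equivalently, a direct sum of suspensions of $\mathcal{A}_p/\!\!/E$ as an $\mathcal{A}_p$-module), giving $\operatorname{Ext}_{\mathcal{A}_p}(H^*(MU;\mathbb{F}_p),\mathbb{F}_p)\cong\operatorname{Ext}_E(\mathbb{F}_p,\mathbb{F}_p)\otimes C^*$. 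With that reading your plan is sound and is in substance the Milnor--Novikov proof the paper invokes.
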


%Since $\Omega^U$ is torsion-free we have
%\begin{Corollary}
%Two closed $2n$-dimensional $U$-manifolds $M_1$ and $M_2$ are $U$-bordant if and only if all the corresponding Chern numbers are the same.
%\end{Corollary}

The choice of suitable algebraic representatives of the bordism classes $y_{k} \in \Omega^U, \, k \in \mathbb N$ was discussed starting from the work of Milnor and Novikov, see the references and latest results in \cite{SU}.

It will be more convenient for us to use the Chern numbers $c^\nu_{\lambda}(M^{2n})$ defined using the stable normal bundle $\nu M^{2n}$:
\beq{chn}
c^\nu_{\lambda}(M^{2n}):= (c_\lambda(\nu M^{2n}), \langle M^{2n}\rangle).
\eeq
They can be expressed through the usual Chern numbers $c_{\lambda}(M^{2n})$ and contain the same information about $U$-manifold $M^{2n}.$

We will use the following convenient class of $U$-manifolds from \cite{B-2020}.
 
 Let $M^{2n}$ be a smooth real manifold of dimension $2n$. A {\it complex framing} of $M^{2n}$ is a choice of complex line bundle $\mathcal L$ on $M^{2n}$, such that the direct sum $TM^{2n}\oplus \mathcal L$ admits a structure of trivial complex vector bundle. Thus complex framing is a $U$-structure of very special type.
The examples of such structures is given by the following natural construction.

Let $X$ be a complex manifold of (complex) dimension $n+1$ with holomorphically trivial tangent bundle and $L$ be a complex line bundle over $X.$ Let $S$ be a real-analytic section $S: X \to L$, transversal to the zero section and consider $M^{2n}=\{x \in X: S(x)=0\} \subset X$, which is a smooth real-analytic submanifold of $X$.
Then the line bundle $\mathcal L=i^*(L)$, where $i: M^{2n} \to X$ is the natural embedding, determines the complex framing on $M.$

In our main example $X$ is a principally polarised abelian variety and $L$ is the canonical line bundle, with holomorphic section given by the $\theta$-function (see next section).
A more explicit example of a real-analytic submanifold for $X$ being a product of elliptic curves is discussed in section 5.

In complex cobordism theory there exists an analogue of the celebrated Riemann-Roch formula \cite{B-1970}.
To explain it recall first its Hirzebruch version. 

Let  $X$ be a $CW$-complex and $\xi \to X$ be a complex vector bundle over $X$.
%The {\it Chern character} of vector bundle $\xi$ is defined as
%\beq{chk}
%ch (\xi)=n+\sum_{k\in \mathbb N}c_{(k)}(\xi)\frac{1}{k!} \in H^*(X, \mathbb Q).
%\eeq
The {\it characteristic Todd class} $Td(\xi) \in H^*(X, \mathbb Q)$ of $\xi$ is uniquely defined by following properties:

\begin{itemize}
\item $Td(\xi_1 \oplus \xi_2)= Td(\xi_1) Td(\xi_2)$;

\item $Td(\eta)=\frac{z}{1-\exp(-z)}$, where $\eta$ is the line bundle associated to the hyperplane section over $\mathbb{C}P^n$
and $z = c_1(\eta) \in H^2(\mathbb{C}P^n; \mathbb Z)$ for every $n$.
\end{itemize}
The Todd class $Td: K(X) \to H^*(X, \mathbb Q)$ in $K$-theory and the classical Chern character $ch: K(X) \to H^*(X, \mathbb Q)$ are related by the fundamental relation
\beq{toddch}
Td(\eta)ch(c_1^K(\eta))=c_1(\eta)
\eeq
where $c_1^K(\eta) =1-\eta^*, \,\, \eta^*=Hom(\eta, \mathbb C)$ is the first Chern class of the line bundle $\eta$ in $K$-theory (see \cite{B-1970, CF}).

The {\it Todd genus} of a $U$-manifold $M^{2n}$ is the characteristic number $$Td(M^{2n})=(Td(TM^{2n}),\langle M^{2n}\rangle),$$ where $\langle M^{2n}\rangle$ is the fundamental cycle of manifold $M^{2n}$. Todd genus defines the ring homomorphism $\Omega_U \to \mathbb{Z}$, which, due to Thom's results \cite{Thom-54}, is uniquely determined by the condition that
$Td(\mathbb{C}P^n) =1$ for all $n$.

%Íàïîìíèì, ÷òî $U$-ñòðóêòóðà íà $U$-ìíîãîîáðàçèè $M^{2n}$ ïîçâîëÿåò îäíîçíà÷íî îïðåäåëèòü êëàññû ×åðíà è ôóíäàìåíòàëüíûé öèêë
%ìíîãîîáðàçèÿ $M^{2n}$.

Let $M$ be a smooth complex algebraic variety and $g_i$ be the complex dimension of the space of holomorphic forms of degree $i$ on $M$. Then Todd genus $Td(M)$ of $M$ is equal to its arithmetic genus, or holomorphic Euler characteristic:
$Td(M) = \sum\limits_{i\geqslant0}(-1)^i g_i$ (see \cite{Hirz, Hirz-66}).

More generally, let  $\xi$ be a holomorphic vector bundle over $M$, $ch(\xi)$ be its Chern character \cite{MS} and
$H^i(M, \mathcal O(\xi))$ be the corresponding cohomology groups \cite{Hirz-66}, then we have the following generalisations of the Riemann-Roch formula due to Hirzebruch and Grothendieck.
%These are two famous generalisations of the classical Riemann-Roch theorem.

\begin{Theorem}(Riemann-Roch-Hirzebruch \cite{Hirz, Hirz-66})
\[
\sum_{i\geqslant0}(-1)^i \dim_\mathbb{C} H^i(M, \mathcal O(\xi)) = (ch (\xi) \, Td(TM),\langle M\rangle).
\]
\end{Theorem}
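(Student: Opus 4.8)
This is Hirzebruch's Riemann--Roch theorem \cite{Hirz,Hirz-66}; equivalently it is the special case $Y=\mathrm{pt}$ of the Grothendieck--Riemann--Roch formula applied to the structure morphism $f\colon M\to\mathrm{pt}$, since then $f_{!}\xi=\sum_{i}(-1)^{i}H^{i}(M,\mathcal O(\xi))\in K(\mathrm{pt})=\mathbb Z$, $Td(T_{\mathrm{pt}})=1$, and $f_{*}$ is evaluation on $\langle M\rangle$. The plan is to follow Hirzebruch's cobordism argument. Both sides of the asserted identity are additive under disjoint unions and multiplicative under direct products: on the right by multiplicativity of $Td$, the ring-homomorphism property of $ch$, and the K\"unneth formula; on the left by additivity of the Euler characteristic and the K\"unneth formula for coherent cohomology. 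Hence, \emph{granting that the left-hand side depends only on the complex-cobordism class of the pair $(M,\xi)$}, it is enough to verify the formula on a family of generators; and since $\Omega_{U}\otimes\mathbb Q$ is freely generated as a ring by the classes $[\mathbb{C}P^{n}]$ (as recorded above via Thom's results and the Mischenko series), one may take this family to consist of products of projective spaces, equipped with direct sums of line bundles of the form $\mathcal O(k_{1})\boxtimes\dots\boxtimes\mathcal O(k_{r})$.

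By the splitting principle --- for the Chern character on the right, and for the twisted Euler characteristic on the left via the flag bundle together with the long exact sequences of a filtration --- it suffices to treat $\xi$ a direct sum of line bundles; additivity then reduces this to a single line bundle, and since every line bundle on $\mathbb{C}P^{n_{1}}\times\dots\times\mathbb{C}P^{n_{r}}$ is an exterior product $\mathcal O(k_{1})\boxtimes\dots\boxtimes\mathcal O(k_{r})$, multiplicativity brings the whole theorem down to the single pair $(\mathbb{C}P^{n},\mathcal O(k))$. For that base case both sides are explicit. For $k\ge 0$ the classical computation gives $\sum_{i}(-1)^{i}\dim_{\mathbb C}H^{i}(\mathbb{C}P^{n},\mathcal O(k))=\dim_{\mathbb C}H^{0}(\mathbb{C}P^{n},\mathcal O(k))=\binom{n+k}{n}$; on the other side, the Euler sequence $0\to\mathcal O\to\mathcal O(1)^{\oplus(n+1)}\to T\mathbb{C}P^{n}\to 0$ gives $Td(T\mathbb{C}P^{n})=\bigl(z/(1-e^{-z})\bigr)^{n+1}$ for $z$ the hyperplane class, and $ch(\mathcal O(k))=e^{kz}$, so pairing with $\langle\mathbb{C}P^{n}\rangle$ extracts the coefficient of $z^{n}$ in $e^{kz}(1-e^{-z})^{-n-1}$; the substitution $w=1-e^{-z}$ identifies this with $[w^{n}]\,(1-w)^{-k-1}=\binom{n+k}{n}$. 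Both sides being polynomial in $k$, equality for $k\ge 0$ gives it for all $k\in\mathbb Z$, and multiplicativity propagates it to every product of projective spaces.

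The genuinely non-formal step, used twice above, is the cobordism invariance of the (twisted) holomorphic Euler characteristic: a $U$-manifold bounding in the sense of complex cobordism need not bound algebraically, so this is not automatic. Hirzebruch's original device is an induction on $\dim M$ built on the exact sequence $0\to\mathcal O_{M}(-H)\to\mathcal O_{M}\to\mathcal O_{H}\to 0$ of a smooth very ample divisor $H\subset M$, which simultaneously expresses $\chi(M,\mathcal O(\xi))$ through Chern classes and identifies it with the Todd expression; the same invariance is what the Grothendieck approach packages into Grothendieck--Riemann--Roch for a closed immersion $i\colon M\hookrightarrow\mathbb{C}P^{N}$, proved by deformation to the normal cone together with the Koszul resolution of $i_{*}\mathcal O_{M}$, which produces $Td$ of the normal bundle as the correction term (the flag-bundle reduction used above is itself an instance of the companion statement for projective-bundle projections). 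This step --- the closed-immersion case, or Hirzebruch's hyperplane-section induction --- is where I expect the real work to lie; I would present the Grothendieck version, since its functoriality under composition makes the reduction of $\xi$ transparent, and then specialise to $Y=\mathrm{pt}$ to recover the stated identity verbatim.
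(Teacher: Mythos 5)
The paper gives no proof of this statement: it records the classical Hirzebruch--Riemann--Roch theorem with citations to \cite{Hirz,Hirz-66} and moves on, so there is nothing internal to compare your argument against. Your sketch is a reasonable outline of the standard proof, and you correctly isolate the genuinely non-formal step: the invariance of the twisted holomorphic Euler characteristic needed to justify reducing to generators, which is precisely what Hirzebruch's hyperplane-section induction, or equivalently the closed-immersion case of Grothendieck--Riemann--Roch via deformation to the normal cone and the Koszul resolution, is supplying. Two points of precision worth fixing in a full write-up: first, since $ch(\mathcal O(k))\,Td(T\mathbb{C}P^n)=z^{n+1}e^{kz}(1-e^{-z})^{-n-1}$, pairing with $\langle\mathbb{C}P^n\rangle$ extracts the \emph{residue} of $e^{kz}(1-e^{-z})^{-n-1}$ at $z=0$ (the coefficient of $z^{-1}$), not its coefficient of $z^n$; the substitution $w=1-e^{-z}$, $dz=dw/(1-w)$, then gives $\binom{n+k}{n}$ as you claim. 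Second, once you have reduced $\xi$ to a single line bundle $L$, the natural receptacle for the pair $(M,L)$ is the bordism module $U_*(\mathbb{C}P^\infty)$ rather than $\Omega_U$ itself; the free $\Omega_U$-module generators are represented by $(\mathbb{C}P^m,\mathcal O(1))$, which combined with the $\Omega_U\otimes\mathbb Q$-generators $[\mathbb{C}P^n]$ yields the family of test cases you use. Neither point changes the conclusion.
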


\begin{Theorem}(Riemann-Roch-Grothendieck \cite{Gro, ful})

Let $f \colon X\to Y$ be a proper morphism of smooth algebraic varieties. Then for any $x\in K(X)$
we have 
\beq{RRG}
ch(f_!x)Td(TY) = f_*(ch (x)\, Td(TX)),
\eeq
where $f_! \colon K(X) \to K(Y)$ and $f_* \colon H^*(X) \to H^*(Y)$ are pushforward (Gysin) homomorphisms in $K$-theory and cohomology respectively.
\end{Theorem}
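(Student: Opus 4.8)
The plan is to deduce the Grothendieck form from the already-stated Hirzebruch form by the classical two-step reduction. \emph{First}, one factors the morphism: since $X$ and $Y$ are smooth (and, in every case we need, quasi-projective), a proper $f$ decomposes as $f=p\circ i$ with $i\colon X\hookrightarrow P$ a closed immersion into a projective bundle $P=\mathbb P(\mathcal E)\to Y$ and $p\colon P\to Y$ the bundle projection, both again proper morphisms of smooth varieties. Writing \mref{RRG} equivalently as $ch(f_!x)=f_*(ch(x)\,Td(T_f))$ with $T_f=TX-f^*TY\in K(X)$ the virtual relative tangent bundle, and using that Gysin maps are functorial ($(p\circ i)_!=p_!\circ i_!$, likewise for $*$), that the cohomological projection formula holds, and that relative tangent bundles add ($Td(T_f)=Td(T_i)\cdot i^*Td(T_p)$), one checks that the validity of \mref{RRG} is multiplicative under composition. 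Hence it is enough to prove the two model cases: $i$ a regular closed immersion, and $p$ a projective bundle projection.

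\emph{Second}, the projective bundle case $p\colon\mathbb P(\mathcal E)\to Y$. Here I would use the projective bundle formulas: $K(\mathbb P(\mathcal E))$ is a free $K(Y)$-module on $1,\zeta,\dots,\zeta^{r-1}$, where $\zeta=[\mathcal O(1)]$ and $r$ is the rank of $\mathcal E$, and $H^*(\mathbb P(\mathcal E),\mathbb Q)$ is correspondingly free over $H^*(Y,\mathbb Q)$ on the powers of $h=c_1(\mathcal O(1))$. By the projection formula on both sides it suffices to check \mref{RRG} on the elements $x=\zeta^j\cdot p^!y$; pulling $y$ through reduces the claim to a universal identity about powers of $\mathcal O(1)$ on the fibre $\mathbb P^{r-1}$, i.e. to the computation of the Euler characteristics $\chi(\mathbb P^{r-1},\mathcal O(j))$ predicted by the preceding Hirzebruch--Riemann--Roch theorem, which is verified by a direct generating-function/residue manipulation (using the Euler sequence to write down $Td(T\mathbb P^{r-1})$).

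\emph{Third}, and this is the heart of the matter, the closed immersion case $i\colon X\hookrightarrow P$. As $i$ is a regular embedding of smooth varieties it has a normal bundle $N=N_{X/P}$, and the statement becomes
\[
ch(i_!x)\;=\;i_*\!\left(ch(x)\,Td(N)^{-1}\right),\qquad x\in K(X).
\]
I would first treat a smooth divisor $X=D\subset P$: the structure sequence $0\to\mathcal O_P(-D)\to\mathcal O_P\to i_*\mathcal O_D\to 0$ gives $i_!\mathcal O_D=1-[\mathcal O_P(-D)]$, so $ch(i_!\mathcal O_D)=1-e^{-D}=D\cdot\frac{1-e^{-D}}{D}$, while $N=\mathcal O(D)|_D$ yields $Td(N)^{-1}=\bigl(\tfrac{1-e^{-D}}{D}\bigr)\big|_D$ and $i_*(1)=D$; twisting by classes pulled back from $P$ and using the projection formula finishes the divisor case. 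The passage from divisors to a general regular embedding is the genuine obstacle. Here one uses deformation to the normal cone to reduce $i$ to the zero-section embedding $s\colon X\hookrightarrow N$ of the normal bundle itself; for the zero section the Koszul resolution gives $s_!x=\pi^!x\cdot\lambda_{-1}(N^\vee)$ (with $\pi\colon N\to X$), and the identity $ch(\lambda_{-1}(N^\vee))=c_{\mathrm{top}}(N)\,Td(N)^{-1}$ together with the fact that $\pi$ is a homotopy equivalence with $s$ matches this against $s_*(ch(x)\,Td(N)^{-1})$. (Equivalently, one may iterate the divisor case through a chain of blow-ups, the ``Riemann--Roch without denominators'' argument of \cite{ful}.) The self-intersection formula $i^*i_!x=x\cdot\lambda_{-1}(N^\vee)$ is what lets one pass from $x=\mathcal O_X$ to arbitrary $x\in K(X)$.

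Combining the two model cases via the multiplicativity noted in the first step yields \mref{RRG} for every proper $f$. I expect the deformation-to-the-normal-cone step to be where essentially all the work lies: one must control how $ch\circ i_!$ behaves as the embedding is degenerated to the zero section and account precisely for the denominators appearing in $Td(N)^{-1}$. The projective-bundle case and the formal bookkeeping of multiplicativity are, by contrast, routine once the Hirzebruch--Riemann--Roch theorem is in hand.
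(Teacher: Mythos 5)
The paper does not give a proof of this theorem: it is quoted as a classical result, with attributions to Grothendieck (SGA~6) and Fulton, and the only remark the paper offers is that it specialises to Hirzebruch's theorem when $Y=pt$. Your sketch is a correct outline of exactly the proof found in those cited sources (Borel--Serre/Grothendieck): factor $f$ as a closed immersion into a projective bundle followed by the bundle projection, verify multiplicativity under composition via the projection formula and additivity of virtual tangent bundles, settle the projective-bundle projection by the $K$-theoretic and cohomological projective bundle formulas reducing to $\chi(\mathbb P^{r-1},\mathcal O(j))$, and settle the regular immersion by the divisor case plus deformation to the normal cone, the Koszul resolution of the zero section, and the identity $ch(\lambda_{-1}(N^\vee))=c_{\mathrm{top}}(N)\,Td(N)^{-1}$. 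So there is nothing to compare against the paper other than to note that you have reconstructed the cited argument faithfully. The one point worth flagging is that this factorisation argument proves the theorem for \emph{projective} (or, more generally, quasi-projective) morphisms; extending to arbitrary proper morphisms of smooth varieties requires an additional d\'evissage via Chow's lemma. You acknowledge the quasi-projectivity implicitly, and it is harmless in the context of this paper, where all varieties in play (abelian varieties, theta divisors, projective bundles) are projective.
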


The Grothendieck's version reduces to Hirzebruch's one when $Y$ is a point (see e.g. \cite{ful}). %and has many important applications, in particular in the theory of moduli spaces.
To describe its extension in the theory of complex cobordisms we recall that the characteristic Todd class $Td_U(\xi)$ of complex vector bundle over $CW$-complex $X$ with values in $H^*(X, \Omega_U\otimes \mathbb Q)$ is uniquely defined by the following properties (see \cite{B-1970}):

\begin{itemize}
\item For every two vector bundles $\xi_1$ and $\xi_2$ over $X$
$$Td_U(\xi_1 \oplus \xi_2)= Td_U(\xi_1) Td_U(\xi_2),$$

\item For any $U$-manifold $M^{2n}$
$$
(Td_U(TM^{2n}),\langle M^{2n}\rangle)=[M^{2n}],
$$
where $TM^{2n}$ is the tangent bundle of $M^{2n}$ and $[M^{2n}]$ is its bordism class.
\end{itemize}

Note that from the first condition the Todd class is uniquely defined by its value on the line bundle $\eta$ over $\mathbb{C}P^N$:
\beq{tda}
Td_U(\eta)=1+\sum_{n \in \mathbb N}A_nz^n, \quad z=c_1(\eta) \in H^2(\mathbb{C}P^N,\mathbb Z),
\eeq
where the coefficients $A_n\in \Omega^{-2n}_U\otimes \mathbb Q$ are determined by the second condition.
One of the results of this paper is the formula 
\beq{tdb}
Td_U(\eta)=\frac{z}{\beta(z)},
\eeq
where $\beta(z)$ is expressed in terms of the theta divisors by (\ref{CDnew}).

%an explicit expression for these coefficients given in the next section.

For any complex vector bundle $\xi$ over $CW$-complex $X$ we have
\beq{tdxi}
Td_U(\xi)=1+\sum_{\lambda}c_\lambda(\xi)A_\lambda,
\eeq
where the sum is over all partitions $\lambda=(i_1,\dots,i_k)$, $c_\lambda(\xi)\in H^{2|\lambda|}(X, \mathbb Z)$ are the corresponding Chern classes and 
$A_\lambda=A_{i_1}\dots A_{i_k}.$

The analogue of the fundamental relation (\ref{toddch}) has the form 
\beq{tdchcob}
Td_U(\eta)ch_U(c_1^U(\eta))=c_1(\eta)
\eeq
where the cobordism class $c_1^U(\eta) \in U^2(\mathbb{C}P^n)$ is the first Chern class of the canonical bundle $\eta$ in complex cobordisms \cite{CF}, which is dual to the bordism class of the canonical embedding $\mathbb CP^{n-1} \subset \mathbb CP^n$. This relation is a key ingredient in the proof of the following analogue of Riemann-Roch formula in complex cobordisms \cite{B-1970}.

For $U$-manifolds we have the Poincar\'e-Atiyah duality in complex cobordisms \cite{Stong-68}:
\[
D_U \colon U_k(M^{2n}) \to U^{2n-k}(M^{2n}), \qquad D^U \colon U^k(M^{2n}) \to U_{2n-k}(M^{2n}).
\]
Let $f \colon M_1^{2n} \to M_2^{2m}$ be the mapping of two $U$-manifolds, $f_*: U_k(M^{2n}) \to U_k(M^{2m})$ be the standard bordism homomorphism, and 
\beq{gysin}
f_\sharp = (D_2)_U f_*(D_1)^U \colon U^k(M_1^{2n})\to U^{2m-2n+k}(M_2^{2m})
\eeq
be the corresponding Gysin homomorphism in complex cobordisms \cite{B-1970}. 
In particular, when  $a\in U^{2k}(M^{2n})$ is the cobordism class dual to the bordism class of a smooth $U$-submanifold $M^{2n-2k}\subset M^{2n}$, then for the mapping
$f \colon M^{2n} \to M^0=pt$ we have $f_\sharp a = [M^{2n-2k}]$.

%The following analogue of Riemann-Roch-Grothendieck-Hirzebruch theorem in complex cobordisms was proposed in \cite{B-1970}.

\begin{Theorem} (Riemann-Roch-Grothendieck-Hirzebruch \cite{B-1970})

Let $f \colon X\to Y$ be a mapping of closed $U$-manifolds, then for any $x\in U^k(X)$
we have 
\beq{RRGH}
ch_U(f_\sharp x) Td_U(TY) = f_!(ch_U (x) Td_U(TX)),
\eeq
where $f_!\colon H^*(X,\mathbb Q) \to H^*(Y, \mathbb Q)$ is the Gysin homomorphism.
\end{Theorem}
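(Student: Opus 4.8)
The plan is to deduce the Riemann--Roch--Grothendieck--Hirzebruch formula (\ref{RRGH}) directly from three basic properties of the Chern--Dold character $ch_U$: its multiplicativity, its naturality with respect to pushforward, and its effect on the fundamental class of a $U$-manifold, the last being exactly where $Td_U$ enters. The starting point is the definition (\ref{gysin}) of the Gysin homomorphism in complex cobordisms, $f_\sharp=(D_2)_U\,f_*\,(D_1)^U$, together with the analogous presentation of the cohomological Gysin map $f_!=\mathcal D_Y^{-1}\circ f_*\circ\mathcal D_X$, where $\mathcal D_X\colon H^*(X,\Lambda)\xrightarrow{\ \sim\ }H_*(X,\Lambda)$, $\mathcal D_X(a)=a\cap\langle X\rangle_H$, is ordinary Poincar\'e duality with coefficients $\Lambda=\Omega_U\otimes\mathbb Q$, and $f_*$ is the singular pushforward. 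I will also use the homology Chern--Dold character $ch_U^\ast\colon U_*(-)\to H_*(-,\Lambda)$ induced by $ch_U$; it is a natural transformation of homology theories and, as usual, is compatible with the cap-product pairing, $ch_U^\ast(a\cap\xi)=ch_U(a)\cap ch_U^\ast(\xi)$ for $a\in U^*(X)$, $\xi\in U_*(X)$.

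The central point is the following Key Lemma: for every closed $U$-manifold $X^{2n}$,
\[
ch_U^\ast(\langle X\rangle_U)=Td_U(TX)\cap\langle X\rangle_H\ \in\ H_*(X,\Lambda),
\]
where $\langle X\rangle_U\in U_{2n}(X)$ is the bordism fundamental class. This is a "capped" strengthening of the defining normalisation of $Td_U$: applying the augmentation $\varepsilon_*\colon H_*(X,\Lambda)\to\Lambda$ to both sides gives $ch_U([X])=[X]$ on the left by (\ref{CDpt}) and $(Td_U(TX),\langle X\rangle)=[X]$ on the right, so the two sides already agree in top degree; the content of the Lemma is their equality in all degrees. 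To prove it I would embed $X\hookrightarrow S^{2N}$ with a stable normal bundle $\nu$ carrying the complex structure complementary to the $U$-structure, so that $TX\oplus\nu$ is stably trivial and hence $Td_U(\nu)=Td_U(TX)^{-1}$; by Atiyah (Spanier--Whitehead) duality $\langle X\rangle_U$ is the image of the cobordism Thom class of $\nu$ under the Pontryagin--Thom collapse, so the Lemma reduces to the claim that $ch_U$ sends the cobordism Thom class of a complex bundle $\xi$ to $Td_U(\xi)^{-1}$ times the ordinary Thom class. By multiplicativity of $ch_U$ and of $Td_U$ and the splitting principle this reduces to line bundles, where it is precisely the fundamental relation (\ref{tdchcob}), $Td_U(\eta)\,ch_U(c_1^U(\eta))=c_1(\eta)$, since $c_1^U(\eta)$ and $c_1(\eta)$ are the Thom classes of $\eta$ pulled back from $\mathbb CP^\infty$.

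Granting the Lemma, the theorem is a formal computation. Starting from $f_\sharp x=(D_2)_U\,f_*\,(D_1)^U x$ with $(D_1)^U x=x\cap\langle X\rangle_U$, the cap-product compatibility of $ch_U^\ast$ and the Key Lemma give
\[
ch_U^\ast((D_1)^U x)=ch_U(x)\cap\bigl(Td_U(TX)\cap\langle X\rangle_H\bigr)=\bigl(ch_U(x)\,Td_U(TX)\bigr)\cap\langle X\rangle_H=\mathcal D_X\bigl(ch_U(x)\,Td_U(TX)\bigr),
\]
and naturality of $ch_U^\ast$ under $f_*$ yields $ch_U^\ast(f_*(D_1)^U x)=f_*\mathcal D_X\bigl(ch_U(x)\,Td_U(TX)\bigr)$. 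Applying the Key Lemma on $Y$ in the same way shows $ch_U^\ast\circ(D_2)^U=\mathcal D_Y\circ\bigl(Td_U(TY)\cdot(-)\bigr)\circ ch_U$ on $U^*(Y)$; inverting $(D_2)^U$ and substituting $\beta=f_*(D_1)^U x$, so that $(D_2)_U\beta=f_\sharp x$, we obtain
\[
Td_U(TY)\cdot ch_U(f_\sharp x)=\mathcal D_Y^{-1}\bigl(ch_U^\ast(f_*(D_1)^U x)\bigr)=\mathcal D_Y^{-1}f_*\mathcal D_X\bigl(ch_U(x)\,Td_U(TX)\bigr)=f_!\bigl(ch_U(x)\,Td_U(TX)\bigr),
\]
which is (\ref{RRGH}).

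The main obstacle is the Key Lemma, that is, controlling the \emph{entire} Todd twist of the fundamental class rather than just its leading term; this is in effect the Riemann--Roch statement for an embedding, and the decisive input is the fundamental relation (\ref{tdchcob}) together with the splitting-principle and multiplicativity reduction to it. A technical point along the way is the existence of a complex (stable) normal bundle of the embedding compatible with the prescribed $U$-structures, which is standard. An alternative route that avoids the homology formulation is to factor $f$ as a closed embedding $X\hookrightarrow Y\times\mathbb CP^N$ followed by the projection $Y\times\mathbb CP^N\to Y$, to prove (\ref{RRGH}) for each factor --- using transitivity of both Gysin maps, multiplicativity of $Td_U$, and the projection formula --- and to compose; this isolates exactly the same crux, the embedding case, plus the elementary computation for $\mathbb CP^N\to pt$.
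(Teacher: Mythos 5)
The paper does not actually prove this theorem: it is stated with a citation to \cite{B-1970}, and the only hint the paper gives is the sentence preceding it, which declares the fundamental relation (\ref{tdchcob}), $Td_U(\eta)\,ch_U(c_1^U(\eta))=c_1(\eta)$, to be "a key ingredient in the proof." Your reconstruction is faithful to that hint and is, to my reading, correct. The overall architecture — introduce the homology-level Chern--Dold character, prove a Key Lemma $ch_U^\ast(\langle X\rangle_U)=Td_U(TX)\cap\langle X\rangle_H$ by passing through Atiyah/Spanier--Whitehead duality to compare cobordism and cohomology Thom classes, reduce that Thom-class comparison to line bundles by multiplicativity and the splitting principle, and close via (\ref{tdchcob}) — is the standard route for cobordism Riemann--Roch statements of this type, and your formal computation with $f_\sharp=(D_2)_U f_*(D_1)^U$ and the ordinary Poincar\'e dualities $\mathcal D_X,\mathcal D_Y$ is correct (including the step where inverting $(D_2)^U$ produces the factor $Td_U(TY)$ on the left of (\ref{RRGH})). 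The two points you pass over quickly, and which a full write-up would need to justify, are (i) the cap-product compatibility $ch_U^\ast(a\cap\xi)=ch_U(a)\cap ch_U^\ast(\xi)$ — true for any multiplicative transformation of theories, but worth stating as a lemma — and (ii) the identification of the cobordism Thom class of a line bundle with $c_1^U$ under the zero section, which is what lets (\ref{tdchcob}) be read as the line-bundle Thom-class comparison. Neither is a gap, just the usual housekeeping. Your alternative route through the factorisation $X\hookrightarrow Y\times\mathbb CP^N\to Y$ is also sound and is closer to how the Grothendieck argument is typically organised; both isolate the embedding case as the crux.
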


In particular, when $Y=pt$ is a point, we have $ch_U(f_\sharp x)=f_\sharp x$ by (\ref{CDpt}), $Td_U(TY) =1$, so in the left hand side we have simply $f_\sharp x$.
Since for the mapping $f: X \to pt$ we have $f_!(a)=(a,\langle X \rangle)$, the formula 
(\ref{RRGH}) reduces to
\beq{RRGHpt}
f_\sharp x = (ch_U (x) Td_U(TX), \langle X \rangle).
\eeq

So we see that both Chern-Dold character $ch_U$ and Todd class $Td_U$ play a prominent role in these formulas. 

We will show now that they both can be explicitly described in terms of the theta divisors (see Theorems 3.1 and 3.5 below).

%In particular, we are going to prove now the following formula for the Todd class of any $U$-manifold
%\beq{TUTX}
%Td_U(TX)=\sum_{\lambda}c_\lambda(\nu X)\frac{[\Theta^\lambda]}{(\lambda+1)!}.
%\eeq

%where the sum is over all partitions $\lambda.$

%\begin{Corollary}
%Let $M^{2n+2}$ be smooth irreducible algebraic variety and $x = c_1(\xi)\in U^{2}(M^{2n+2})$ be the first Chern class of a holomorphic line bundle $\xi$ over $ M^{2n+2}$.
%Assume that there exists a smooth irreducible algebraic subvariety $M^{2n} \subset M^{2n+2}$ realising bordism class dual to cobordism class  $x = c_1(\xi)$, then we have
%$[M^{2n}] = f_*(ch_U x Td_U(TM^{2n+2}))$.
%\end{Corollary}

%In the compact case such manifold is a quotient $G/\Gamma$ of the complex Lie group $G$  by a cocompact discrete subgroup $\Gamma \subset G$. If $E$ has also K\"ahler structure, then $G$ must be abelian group: $G=\mathbb C^{n+1}.$
%If $E$ is also projective algebraic variety, then it must be an abelian variety \cite{GH}.

%The celebrated {\it Milnor-Hirzebruch problem} is to characterise all sets of $\pi(n)$  numbers $c_\lambda \in \mathbb Z$, which can be realised as Chern numbers of $U$-manifolds \cite{Hirz}.

%The usual Chern classes of the normal bundle correspond to the elementary symmetric polynomials, which are known to be free generators of the corresponding ring of symmetric polynomials. 

%\medskip

\section{Theta divisors, Todd class and Chern-Dold character in complex cobordisms}

%For the theory of framed manifolds we refer to the pioneering work of Pontryagin \cite{Pon-55} , for the theory of stably complex complex manifolds to the foundational works of Milnor and Novikov  \cite{Mil-60}, \cite{Nov-62}.%, \cite{Thom-54}.
%
%--------

Let $A^{n+1}=\mathbb C^{n+1}/\Gamma$ be a principally polarised abelian variety with lattice $\Gamma$ generated by the columns of the $(n+1)\times 2(n+1)$ matrix $(I, \,\, \tau)$ with complex symmetric $(n+1)\times (n+1)$ matrix $\tau$ having positive imaginary part \cite{GH}. Its polarisation line bundle $L$ has one-dimensional space of sections generated by the classical Riemann $\theta$-function
\beq{theta}
\theta(z, \tau)=\sum_{l\in \mathbb Z^{n+1}}\exp [\pi i (l, \tau l) + 2\pi i (l, z)], \, z \in \mathbb C^{n+1}.
\eeq
The corresponding theta divisor $\Theta^n \subset A^{n+1}$ given by $\theta(z,\tau)=0$ is known (after Andreotti and Mayer \cite{AM67}) to be smooth for a general principally polarised abelian variety $A^{n+1}.$   The topology of the smooth theta divisor does not depend on the choice of such abelian variety.

In particular, for $n=1$ a generic abelian surface $A^2$ is the Jacobi variety of a smooth genus 2 curve $\mathcal C$ with theta divisor $\Theta^1 \cong \mathcal C$. For $n=2$ an indecomposable $A^3$ is Jacobi variety of a smooth genus 3 curve $\mathcal C$; in that case $\Theta^2 \cong S^2(\mathcal C)$ is smooth for all non-hyperelliptic curves $\mathcal C$, which must be then trigonal. For $n\geq 3$ the general case of $A^{n+1}$  is not Jacobian, and the theta divisor is smooth outside a locus in the moduli space of the abelian varieties of complex codimension 1. For more detail on the geometry of theta divisors we refer to the survey \cite{GruH} by Grushevsky and Hulek.

The line bundle $L$ is ample with $L^{\otimes 3}$ known (after Lefschetz \cite{BL}) to be very ample, so that the sections of $L^{\otimes 3}$ determine the embedding of $A^{n+1}$ into corresponding projective space $\mathbb P^N, \, N=3^{n+1}-1.$
The corresponding quadratic and cubic equations, defining the image in $\mathbb P^N$,  were described by Birkenhake and Lange \cite{BL1990} (see also Ch. 7 in \cite{BL}).
For the elliptic curves this reduces to the Hasse cubic equation
$
x^3+y^3+z^3=3\lambda xyz.
$

Note that the line bundle $L^{\otimes 2}$ is not very ample; its sections define the quotient $A^{n+1}/\mathbb Z_2$ by the involution $z \to -z$, which is known as {\it Kummer variety.}
%It has the sections given by $\theta^2_{\epsilon, \delta}(z)$, where the theta-functions with characteristics $(\epsilon, \delta)\in \mathbb Z_2^{2n}$ are defined by
%$$
%\theta_{\epsilon, \delta}(z)=\sum_{l\in \mathbb Z^{n+1}}\exp{\pi i (l+\epsilon/2, \tau l+\epsilon/2) + 2\pi i (l+\epsilon/2, z+\delta/2)}, \, z \in \mathbb C^{n+1}.
%$$
When $n=1$ this is the famous Kummer quartic surface in $\mathbb P^3$ with 16 singular points, see e.g. \cite{BL}.

Let $i: \Theta^{n} \to A^{n+1}$ be the natural embedding and $\mathcal D=c_1(\mathcal L) \in H^2(\Theta^{n},\mathbb Z)$ be the first Chern class of the line bundle $\mathcal L: =i^*(L),$ which is also the normal bundle of $\Theta^n \subset A^{n+1}$. Then the total Chern class $c(\Theta^n)=\sum_{k=0}^n c_k(\Theta^n)$  satisfies
\beq{rel}
c(\Theta^n)(1+\mathcal D)=1
\eeq
since the tangent bundle of an abelian variety is trivial. This means  that the topological Euler characteristic 
\beq{chi}
\chi(\Theta^n)=c_{n}(\Theta^n)=(-1)^n\mathcal D^n =(-1)^n (n+1)!,
\eeq
since $\mathcal D^n=(n+1)!$ (see next section). Alternatively, we can use formula (\ref{hirzeb}) with $Q(z)=1+z$ corresponding to the Euler characteristic $\Phi=\chi$:
$$
\sum_{n=0}^\infty\chi(\Theta^n)\frac{z^{n+1}}{(n+1)!} = \frac{z}{1+z}.
$$

The Betti numbers of the theta divisors $\Theta^{n}$ are not difficult to compute, see e.g. \cite{IW, NS}. 
Indeed, by the Lefschetz hyperplane theorem the embedding $i: \Theta^{n} \to A^{n+1}$ induces the isomorphisms
$$
i_*: H_k(\Theta^{n},\mathbb Z)\to H_k(A^{n+1},\mathbb Z), \quad
i_*:\pi_k(\Theta^{n}) \to \pi_k(A^{n+1})
$$
for $k<n$, while for $k=n$ these homomorphisms are surjections \cite{Lazarsfeld, Milnor}. 

In particular, for $n\geq 2$ the theta divisor has the fundamental group $$\pi_1(\Theta^n) = \pi_1(A^{n+1})=\mathbb Z^{2n+2}$$
and $\pi_k(\Theta^n)=\pi_k(A^{n+1})=0$ for $1<k<n$ since $A^{n+1}$ is a topological torus. When $n=1$ the theta divisor $\Theta^1\cong\mathcal C\subset J(\mathcal C)$ is a genus 2 curve with non-commutative $\pi_1(\mathcal C)$ and $i_*:\pi_1(\mathcal C) \to \mathbb Z^4$ being the abelianisation map and all other homotopy groups being trivial.

The manifolds with free abelian fundamental group were studied by Novikov \cite{Nov-66} in relation with the famous problem of topological invariance of rational Pontryagin classes.
The theta divisors give non-trivial examples of such manifolds with non-zero Pontryagin classes.

Using Poincare duality we obtain now all Betti numbers of $\Theta^n$ as
$$
b_k(\Theta^n)=b_k(A^{n+1})={2n+2 \choose k}=b_{2n-k}(\Theta^n), \,\, k<n,
$$
except the middle one $b_n$, which can be found using the formula (\ref{chi}) for the Euler characteristic:
\beq{catalan}
b_{n}(\Theta^n)=(n+1)!+\frac{n}{n+2} {2n+2 \choose n+1}=(n+1)!+n C_{n+1},
\eeq
where $C_n=\frac{1}{n+1} {2n \choose n}$ is the $n$-th Catalan number, see \cite{Stanley}.

Since the cohomology groups of $\Theta^n$ have no torsion \cite{IW}, this defines them uniquely, but it seems that the multiplication structure remains to be understood.
Note that we can compute the signature $\tau(\Theta^n)$ of the corresponding quadratic form on the middle cohomology for even $n$ using our general formula (\ref{hirzeb}) (see also \cite{B-2020}). Indeed, the signature corresponds to the Hirzebruch $L$-genus with
$Q(z)=\frac{z}{\tanh z}.$ Since
$$
\frac{z}{Q(z)}=\tanh z=\sum_{k=0}^\infty 2^{2k+2}(2^{2k+2}-1)B_{2k+2}\frac{z^{2k+1}}{(2k+2)!}
$$
from (\ref{hirzeb}) we have that the signature of the theta divisor $\Theta^n$ for even $n$ is
\beq{sign}
\tau(\Theta^n)=\frac{2^{n+2}(2^{n+2}-1)}{n+2}B_{n+2},
\eeq
where $B_n$ are the classical Bernoulli numbers:
$$B_0 = 1, \, B_1 = - \frac{1}{2}, \, B_2 = \frac{1}{6}, \, B_4 = - \frac{1}{30}, \, B_6 =  \frac{1}{42},\, B_8 = -  \frac{1}{30}, \, B_{10} =  \frac{5}{66}, ...$$%\, B_{12}= -\frac{691}{2730},...$$
In particular, for $n=2$ we have $b_2(\Theta^2)=16$ and 
$
\tau(\Theta^2)=\frac{2^4(2^4-1)B_4}{4}=-2. 
$
Note that the integrality of the right hand side of (\ref{sign}) is not obvious. The appearance of both Bernoulli and Catalan numbers looks quite intriguing and invites further study here.

We should mention here very interesting work by Nakayashiki and Smirnov on the computation of cohomology groups of the complement of the (singular) theta divisor in hyperelliptic Jacobi variety \cite{NS,Nak}. 
%In the simplest case $n=1$ a generic abelian variety is Jacobi variety of some genus 2 curve $\Gamma$, so by Riemann theorem in this case the theta divisor $\Theta^1=\Gamma$ is a genus two curve. 

For all $n$ the smooth theta divisor $\Theta^n$ is a projective variety of general type. Indeed, since the canonical class of abelian variety is zero, by the adjunction formula \cite{GH} the canonical bundle
$K_{\Theta^n}=i^*(L)=\mathcal L$, which is ample. 
In particular, $\mathcal L$ is known to have an $(n+1)$-dimensional space of sections generated by the partial derivatives  $\partial_\xi \theta(z,\tau)$ of the theta function.
 
 Consider the intersection 
\beq{inters2}
\Theta_{k}^{n-k}=\Theta^n\cap \Theta^n(a_1) \cap \dots \Theta^n(a_k)
\eeq
of $\Theta^n$ with $k$ general translates $\Theta^n(a_i), \, a_i \in A^{n+1}$ of the theta divisor $\Theta^n.$ 
 
  \begin{prop}
  For all $k<n$ and general $a_i \in A^{n+1}, \, i=1,\dots,k$ the variety $\Theta_{k}^{n-k}$ is smooth and irreducible of general type.
  \end{prop}
 
\begin{proof}
The abelian variety $A^{n+1}$ is a group variety and hence is homogeneous.
Each of the subvarieties $\Theta^n(a_i)$ is a translate of the smooth subvariety $\Theta^n$. Therefore for general  general $a_i \in A^{n+1}$ the intersection $\Theta_{k}^{n-k}$ is smooth by Kleiman's theorem \cite{ful}. 

For $k<n$ this variety is irreducible, since by the Lefschetz hyperplane theorem (see Th.3.1.17 in \cite{Lazarsfeld}) the Betti number $b_0(\Theta_{k}^{n-k})=1$ and thus $\Theta_{k}^{n-k}$ is connected.

The translate $\Theta^n(a_i)$ is the zero set of section of the shifted line bundle $L(a_i)$, so the adjunction formula \cite{GH} shows that the canonical bundle  $K_{\Theta_{k}^{n-k}}$ is given by
$$
K_{\Theta_{k}^{n-k}}=\bigotimes_{i=0}^k L(a_i)|_{\Theta_{k}^{n-k}}
$$
with $a_0:=0.$ Since each line bundle $L(a_i)$ is ample, the bundle $K_{\Theta_{k}^{n-k}}$ is ample too, so $\Theta_{k}^{n-k}$ is of general type.
\end{proof}

% \begin{prop}
% The system of equations 
%\beq{equa}
% \theta(z,\tau)=0, \, \partial_{\xi_1} \theta(z,\tau)=0, \dots, \partial_{\xi_k} \theta(z,\tau)=0, \quad z \in A^{n+1}
%\eeq
%with generic $\xi_1,\dots,\xi_k \in \mathbb C^{n+1}$ determine smooth complete intersections $\Theta_k^{n-k} \subset \Theta^n \subset A^{n+1}$, which are irreducible algebraic varieties for all $k<n.$
%  \end{prop}
% 
%\begin{proof}
%The system of equations 
%$$
% \theta(z,\tau)=0, \, \partial_{1} \theta(z,\tau)=0, \dots, \partial_{n+1} \theta(z,\tau)=0, \quad z \in A^{n+1}
%$$
%has no solutions since $\Theta^n$ is smooth.
%Therefore $\mathcal L$ is basepoint free on $\Theta^n$, so the claim follows from the Bertini theorem (see e.g. \cite{GH}, Ch.1, Sec.1).
%The corresponding varieties $\Theta_k^{n-k}$ with $k<n$ are irreducible, since they are smooth and (by Lefschetz theorem) their Betti number $b_0=1$.
%\end{proof}

In particular, $\Theta_n^0$ consists of $(n+1)!$ points and $\Theta_{n-1}^1$ is a curve with Euler characteristic $\chi=-n(n+1)!.$

The cobordism class $[\Theta_k^{n-k}]$ does not depend on $a_1,\dots,a_k$ and the choice of abelian variety. 
Alternatively,  as a representative of this cobordism class we can choose the variety given by the system of equations 
\beq{equa}
 \theta(z,\tau)=0, \, \partial_{\xi_1} \theta(z,\tau)=0, \dots, \partial_{\xi_k} \theta(z,\tau)=0, \quad z \in A^{n+1}.
\eeq
Since the partial derivatives $\partial_{\xi} \theta(z,\tau)$ are the sections of the line bundle $\mathcal L$ on $\Theta^n$, this bundle is base-point free by smoothness of $\Theta^n.$ Therefore by Bertini theorem \cite{GH} for generic $\xi_1,\dots,\xi_k \in \mathbb C^{n+1}$ the equation (\ref{equa}) determines a smooth irreducible algebraic variety for all $k<n.$

%Note that the varieties $\Theta_k^{n-k}$ with $k\neq n$ are irreducible, since they are smooth and (by Lefschetz theorem) their Betti number $b_0=1$.

%\section{Proofs}

It turns out that the theta divisors can be used to represent Todd class of complex vector bundles in quite general situation. 

\begin{Theorem}
The characteristic Todd class of complex vector bundle $\xi$ over CW-complex $X$ is given by the formula
\beq{TODD}
%Td(TM)=\sum_\lambda c_\lambda(\nu M)\frac{[\theta_\lambda]}{(\lambda+1)!},
Td_U(\xi)=\sum_\lambda c_\lambda(-\xi)\frac{[\Theta^\lambda]}{(\lambda+1)!},
\eeq
where the sum is over all partitions $\lambda=(i_1,\dots, i_k)$ and $\Theta^\lambda=\Theta^{i_1}\times\dots\times \Theta^{i_k}$ as in (\ref{prodt}).
In particular, for any $U$-manifold $M$ we have
\beq{TODDM}
Td_U(TM)=\sum_\lambda c_\lambda(\nu M)\frac{[\Theta^\lambda]}{(\lambda+1)!}.
\eeq
\end{Theorem}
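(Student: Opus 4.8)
The plan is to compute $Td_U(\xi)$ by first identifying it as the multiplicative characteristic class with characteristic series $z/\beta(z)$, then expressing $\beta$ through the theta divisors, and finally matching with the right-hand side of \eqref{TODD} by an elementary symmetric-function identity; the case $\xi = TM$ then gives \eqref{TODDM}.

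\emph{Step 1: the characteristic series of $Td_U$.} Restricting the fundamental relation \eqref{tdchcob} to the universal line bundle $\eta$ over $\mathbb{C}P^\infty$ and using \eqref{CD} in the form $ch_U(c_1^U(\eta)) = \beta(z)$, $z = c_1(\eta)$, gives $Td_U(\eta)\,\beta(z) = z$, so $Td_U(\eta) = z/\beta(z)$. By multiplicativity, the splitting principle and naturality on $\mathbb{C}P^\infty$ this determines $Td_U$ uniquely as the multiplicative class with characteristic series $R(z) := z/\beta(z)$, so it remains to evaluate $R$.

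\emph{Step 2: computing $\beta$ via the theta divisor.} Here I would prove \eqref{CDnew} by applying the cobordism Riemann--Roch formula \eqref{RRGHpt} to the projection $f\colon A^{n+1}\to pt$ and to the powers $x = (c_1^U(L))^{k+1}$ of the cobordism Euler class of the principal polarisation bundle $L$. Since the tangent bundle of $A^{n+1}$ is holomorphically trivial, $Td_U(TA^{n+1}) = 1$, and by naturality $ch_U(x) = \beta(D)^{k+1}$ with $D = c_1(L)$; on the other hand $f_\sharp x$ is the cobordism class of the iterated transverse zero locus, i.e. the complete intersection $\Theta^{n-k}_k$ of \eqref{intersec}. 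Pairing $\beta(D)^{k+1}$ with $\langle A^{n+1}\rangle$ and using the self-intersection number $(D^{n+1},\langle A^{n+1}\rangle) = (n+1)!$ yields the residue formula \eqref{res}; for $k = 0$ this reads $[\Theta^n] = [\mathcal B^{2n}]$, which is exactly \eqref{CDnew}. Hence $R(z)^{-1} = \beta(z)/z = \sum_{m\ge 0}\frac{[\Theta^m]}{(m+1)!}z^m$ with the convention $[\Theta^0] := 1$.

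\emph{Step 3: an algebraic identity and the conclusion.} For a sum of line bundles $\xi = L_1\oplus\dots\oplus L_r$ with $c_1(L_i) = x_i$ and any sequence $u_0 = 1, u_1, u_2,\dots$, expanding the product and collecting monomials according to the partition formed by their exponents gives
\[
\prod_{i=1}^{r}\Bigl(\sum_{m\ge 0} u_m x_i^m\Bigr) = \sum_{\lambda} c_\lambda(\xi)\,u_\lambda ,
\]
the sum over all partitions $\lambda = (i_1,\dots,i_k)$, where $c_\lambda$ is the Chern class attached to $m_\lambda$ and $u_\lambda = u_{i_1}\cdots u_{i_k}$. Being polynomial in the Chern classes, the right-hand side makes sense for arbitrary virtual bundles and defines the multiplicative characteristic class $G_u$ with characteristic series $g(z) = \sum_m u_m z^m$. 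Taking $u_m = [\Theta^m]/(m+1)!$ makes $g = \beta(z)/z = R^{-1}$, so $G_u = Td_U^{-1}$ as multiplicative classes. Since $Td_U$ is multiplicative, $Td_U(-\zeta) = Td_U(\zeta)^{-1}$ for every $\zeta$, whence for any $\xi$
\[
Td_U(\xi) = Td_U\bigl(-(-\xi)\bigr) = Td_U(-\xi)^{-1} = (Td_U^{-1})(-\xi) = G_u(-\xi) = \sum_\lambda c_\lambda(-\xi)\frac{[\Theta^\lambda]}{(\lambda+1)!},
\]
which is \eqref{TODD}. Putting $\xi = TM$ and using $-TM = \nu M$ in $\widetilde{K}(M)$ together with the stability of Chern classes gives $c_\lambda(-TM) = c_\lambda(\nu M)$, hence \eqref{TODDM}; pairing \eqref{TODDM} with $\langle M^{2n}\rangle$, where only the partitions with $|\lambda| = n$ survive, reproduces \eqref{decom}.

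The only substantial point is Step 2. Steps 1 and 3 are formal once $R$ is known, but fixing $R$ means fixing $\beta$, which needs the geometry: the smoothness of $\Theta^n$ (Andreotti--Mayer) and of the complete intersections $\Theta^{n-k}_k$ (Bertini), the triviality of $TA^{n+1}$, and the identification of the Gysin image under $f_\sharp$ of a power of the cobordism Euler class with the cobordism class of the corresponding iterated zero locus. Alternatively one can avoid Riemann--Roch and compute the Hirzebruch genera of $\Theta^n$ directly from the normal-bundle relation \eqref{rel}; I would use the Riemann--Roch route as the cleaner one.
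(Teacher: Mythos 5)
Your proof is correct, but it takes a genuinely different route from the paper's. The paper defines the right-hand side of \eqref{TODD} as a class $\mathbb T(\xi)$ and then verifies the two defining axioms of $Td_U$: multiplicativity follows from the Cartan formula for Chern classes, and the normalisation $(\mathbb T(TM),\langle M\rangle)=[M]$ is reduced, via multiplicativity and the Milnor--Novikov theorem (the theta divisors are multiplicative generators of $\Omega_U\otimes\mathbb Q$ because $c_{(n)}(\Theta^n)\neq 0$), to the single geometric computation of Lemma~3.2: $c^\nu_\lambda(\Theta^n)=0$ for $\lambda\neq(n)$ and $c^\nu_{(n)}(\Theta^n)=(n+1)!$. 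Equation \eqref{CDnew} is then a \emph{corollary} of \eqref{TODD} via the fundamental relation \eqref{tdchcob}. You invert this logical order: you first establish $[\mathcal B^{2n}]=[\Theta^n]$ by applying the cobordism Riemann--Roch formula \eqref{RRGHpt} to $f\colon A^{n+1}\to pt$ with $x=c_1^U(L)$, using only the holomorphic triviality of $TA^{n+1}$ (so $Td_U(TA^{n+1})=1$), the intersection number $(D^{n+1},\langle A^{n+1}\rangle)=(n+1)!$, and the identification of $f_\sharp c_1^U(L)$ with the class of the zero locus; then you read off the characteristic series $z/\beta(z)$ of $Td_U$ from \eqref{tdchcob} and recover \eqref{TODD} by the elementary monomial-symmetric-function identity $\prod_i\sum_m u_m x_i^m=\sum_\lambda c_\lambda(\xi)u_\lambda$. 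The two proofs draw on the same geometric facts (triviality of the tangent bundle of the abelian variety and $D^{n+1}=(n+1)!$), but the paper's argument is more elementary -- a direct verification of the axioms, with no Riemann--Roch -- whereas yours is conceptually cleaner: it isolates the single Gysin computation fixing $\beta$, avoids the Milnor--Novikov generation argument entirely, and makes the symmetric-function bookkeeping purely formal. You even flag the elementary alternative yourself; both routes are sound, and yours has the additional payoff of delivering the residue formula \eqref{res} for free along the way.
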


\begin{proof}
Denote the right hand side of formula (\ref{TODD}) as $\mathbb T(\xi)$
and check that this characteristic class satisfies both defining properties of the Todd class.

The first property $\mathbb T(\xi_1+\xi_2)=\mathbb T(\xi_1)\mathbb T(\xi_2)$    follows from the well-known formula for the Chern classes
$$
c_\lambda(\xi_1+\xi_2)=\sum_{\lambda=(\lambda_1, \lambda_2)}c_{\lambda_1}(\xi_1)c_{\lambda_2}(\xi_2).
$$
The evaluation $(\mathbb T(TM),\langle M\rangle) \in \Omega_U\otimes \mathbb Q$ defines the homomorphism of $\Omega_U\otimes \mathbb Q$ into itself.
We claim that it is the identity.

\begin{Lemma}
The Chern numbers (\ref{chn}) of the theta divisor $\Theta^n$ are
\beq{c1l}
c^\nu_\lambda(\Theta^n)=0
\eeq
for any partition $\lambda$ of $n$ different from the one-part partition $\lambda=(n)$, and
\beq{c2l}
c^\nu_{(n)}(\Theta^n)=(n+1)!.
\eeq
\end{Lemma}

\begin{proof}
Since the tangent bundle of abelian variety is trivial, the normal bundle $\nu\Theta^n$ is stably equivalent to the line bundle $\mathcal L=i^*(L)$, where $i: \Theta^n \to A^{n+1}$ is natural embedding and $L$ is the principal polarisation line bundle on $A^{n+1}.$ This immediately implies (\ref{c1l}). 

To prove condition (\ref{c2l}) we need only to use the well-known fact that $$D^{g}=g!\in H^{2g}(A^g, \mathbb Z)=\mathbb Z$$ where $D \in H^2(A^g,\mathbb Z)$ is the Poincare dual cohomology class of the theta divisor $\Theta \subset A^g$ of any principally polarised abelian variety (see e.g. \cite{BL}). Geometrically, this means that the intersection of $g$ generic shifts of theta divisor $\Theta$ of abelian variety $A^{g}$ consists of $g!$ points. One can see this easily in the degenerate case when $X^g=\mathcal E^g$ is the product of $g$ elliptic curves.
\end{proof}

Due to the results of Milnor and Novikov, since $c_{(n)}(\Theta^n)\neq 0$ the theta divisors can be chosen as multiplicative generators of the algebra $\Omega_U\otimes \mathbb Q.$
Hence to prove that $(\mathbb T(TM),\langle M\rangle)=[M]$ it is enough to check this for all theta divisors, which immediately follows from the lemma.
By uniqueness $\mathbb T(\xi)=Td_U(\xi),$ which completes the proof.
\end{proof}

\begin{Corollary}
The cobordism class of any $U$-manifold $M^{2n}$ can be given by formula (\ref{decom}).
\end{Corollary}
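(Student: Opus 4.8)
The plan is to deduce formula (\ref{decom}) directly from Theorem 3.1, which has just been proved. The starting point is the second defining property of the cobordism-valued Todd class: for any $U$-manifold $M^{2n}$ one has $(Td_U(TM^{2n}),\langle M^{2n}\rangle)=[M^{2n}]$ in $\Omega_U\otimes\mathbb Q$. So I would substitute the explicit expression (\ref{TODDM}) for $Td_U(TM^{2n})$ into this identity to get
\[
[M^{2n}]=\left(\sum_\lambda c_\lambda(\nu M^{2n})\,\frac{[\Theta^\lambda]}{(\lambda+1)!},\ \langle M^{2n}\rangle\right),
\]
the sum being over all partitions $\lambda$ and the pairing being $\Omega_U\otimes\mathbb Q$-linear in the cohomology argument.

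The next step is to isolate which partitions actually contribute. Since $c_\lambda(\nu M^{2n})\in H^{2|\lambda|}(M^{2n},\mathbb Z)$ while $\langle M^{2n}\rangle\in H_{2n}(M^{2n},\mathbb Z)$, the Kronecker pairing vanishes unless $2|\lambda|=2n$: for $|\lambda|>n$ the group $H^{2|\lambda|}(M^{2n})$ itself is zero, and for $|\lambda|<n$ a class of degree $<2n$ pairs trivially with the top-dimensional cycle. Hence only the finitely many partitions with $|\lambda|=n$ survive, and for each of them $(c_\lambda(\nu M^{2n}),\langle M^{2n}\rangle)=c^\nu_\lambda(M^{2n})$ by the definition (\ref{chn}) of the normal Chern numbers. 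Moving the scalar coefficients $[\Theta^\lambda]/(\lambda+1)!\in\Omega_U\otimes\mathbb Q$ out of the pairing yields exactly
\[
[M^{2n}]=\sum_{\lambda:\,|\lambda|=n}c^\nu_\lambda(M^{2n})\,\frac{[\Theta^\lambda]}{(\lambda+1)!},
\]
which is (\ref{decom}).

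There is essentially no obstacle here; with Theorem 3.1 in hand the corollary is just bookkeeping about degrees. The only point worth a remark is that the identity lives a priori in $\Omega_U\otimes\mathbb Q$ and not in $\Omega_U$: the individual summands $[\Theta^\lambda]/(\lambda+1)!$ are merely rational cobordism classes, yet their combination $\sum_{|\lambda|=n} c^\nu_\lambda(M^{2n})[\Theta^\lambda]/(\lambda+1)!$ is the integral class $[M^{2n}]$, automatically so since the left-hand side is integral. This is precisely the source of the divisibility constraints on the Chern numbers $c^\nu_\lambda$ mentioned in the introduction, obtained for instance by applying a Hirzebruch genus such as the Todd genus, cf. (\ref{Todd}).
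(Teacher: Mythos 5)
Your argument is correct and is essentially the paper's proof, just with the degree bookkeeping spelled out: you evaluate formula (\ref{TODDM}) on the fundamental cycle and invoke the second defining property of $Td_U$. Nothing to add.
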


%\begin{proof}
Indeed, we have formula (\ref{decom}) by evaluating formula (\ref{TODDM}) on the fundamental cycle of $M^{2n}$ and using the second property of the Todd class.
%Now formula (\ref{decom}) follows from (\ref{cht}).
%\end{proof}

Recall now that the Todd class is uniquely defined by its value on the line bundle $\eta$ over $\mathbb{C}P^\infty$ by formula (\ref{tda}):
$$
Td_U(\eta)=1+\sum_{n \in \mathbb N}A_nz^n, \quad z=c_1(\eta) \in H^2(\mathbb{C}P^\infty,\mathbb Z)
$$
where some coefficients $A_n\in \Omega^{-2n}_U\otimes \mathbb Q$.
Now we can describe these coefficients in terms of theta divisors.

%\begin{Corollary}
%The Todd class of $\eta$ satisfies the relation
%\beq{td}
%Td_U(\eta)=\frac{z}{\beta(z)},
%\eeq
%where $\beta(z)$ is given in terms of theta divisors by formula (\ref{CDnew}).
%\end{Corollary}

\begin{Corollary}
The Todd class of $\eta$ has the form
\beq{td}
Td_U(\eta)=\left( \sum_{n=0}^\infty\frac{[\Theta^n]}{(n+1)!}z^n\right)^{-1}.
%Td_U(\eta)\sum_{n=0}^\infty\frac{[\Theta^n]}{(n+1)!}z^n=1, \quad \quad z=c_1(\eta),
\eeq
%so $$Td_U(\eta)=\left( \sum_{n=0}^\infty\frac{[\Theta^n]}{(n+1)!}z^n\right)^{-1}.$$
\end{Corollary}

%\begin{Corollary}
%The Todd class of $\eta$ satisfies the relation
%\beq{td}
%Td_U(\eta)\sum_{n=0}^\infty\frac{[\Theta^n]}{(n+1)!}z^n=1, \quad \quad z=c_1(\eta),
%\eeq
%so $$Td_U(\eta)=\left( \sum_{n=0}^\infty\frac{[\Theta^n]}{(n+1)!}z^n\right)^{-1}.$$
%\end{Corollary}

Indeed, this follows from the relation $Td_U(\eta)Td_U(-\eta)=1$ and  formula (\ref{TODD}) applied to $\xi=-\eta.$

Now we are ready to prove Theorem 1.1. 

\begin{Theorem}
The Chern-Dold character is uniquely defined by the formula
\beq{CDN}
ch_U(u)=z+\sum_{n=1}^\infty[\Theta^n]\frac{z^{n+1}}{(n+1)!},
\eeq
where $z=c_1(\eta)\in H^2(\mathbb CP^N, \mathbb Z)$ and $u \in U^2(\mathbb CP^N)$ is the first Chern class of the line bundle $\eta$
over $\mathbb CP^N$ in the complex cobordisms.
\end{Theorem}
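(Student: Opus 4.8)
The plan is to bypass the manifolds $\mathcal B^{2n}$ entirely and read off $ch_U(u)$ from structure already in place: the fundamental relation (\ref{tdchcob}) between $ch_U$ and $Td_U$, together with the explicit form of $Td_U(\eta)$ obtained in Corollary 3.4. Since $ch_U$ is a multiplicative natural transformation of cohomology theories, by (\ref{CD}) it is completely determined by its single value $ch_U(u)$ on $u=c_1^U(\eta)\in U^2(\mathbb CP^\infty)$; so it suffices to identify this one power series, and then the uniqueness assertion is automatic.

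First I would specialise the fundamental relation (\ref{tdchcob}) to the canonical line bundle $\eta$ over $\mathbb CP^N$. Since $c_1^U(\eta)=u$ and $c_1(\eta)=z$, it reads
\[
Td_U(\eta)\,ch_U(u)=z
\]
in $H^*(\mathbb CP^N,\Omega_U\otimes\mathbb Q)$ (equivalently, passing to $\mathbb CP^\infty$, as an identity of formal power series in $z$ with coefficients in $\Omega_U\otimes\mathbb Q$). Next I would invoke Corollary 3.4, that is formula (\ref{td}), which gives
\[
Td_U(\eta)^{-1}=\sum_{n=0}^\infty\frac{[\Theta^n]}{(n+1)!}\,z^n,
\]
where $\Theta^0$ is a point, so the constant term is $1$. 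Substituting and solving for $ch_U(u)$ yields
\[
ch_U(u)=z\cdot Td_U(\eta)^{-1}=z\sum_{n=0}^\infty\frac{[\Theta^n]}{(n+1)!}z^n=z+\sum_{n=1}^\infty[\Theta^n]\frac{z^{n+1}}{(n+1)!},
\]
which is exactly (\ref{CDN}). Comparing with (\ref{CD}) gives as a bonus $[\mathcal B^{2n}]=[\Theta^n]$, which is the content of Theorem 1.1.

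I do not expect a serious obstacle at this last stage: all the real work has already been done in Theorem 3.1 and Lemma 3.2, where the normal Chern numbers $c^\nu_\lambda(\Theta^n)$ of the theta divisors are computed and shown to force $(\,\mathbb T(TM),\langle M\rangle)=[M]$, and in the classical identity $D^g=g!$. The only points that need care are formal: that the power-series inversion of $Td_U(\eta)$ is legitimate (it is, since $Td_U(\eta)$ has invertible constant term $1$ and one may either work over $\mathbb CP^\infty$ with genuine formal power series or over $\mathbb CP^N$ in the truncated polynomial ring $H^*(\mathbb CP^N)$), and that the relation (\ref{tdchcob}) together with the identification $c_1^U(\eta)=u$ and the Poincar\'e--Atiyah dualities underlying it are available; but all of these are established in \cite{B-1970} and recalled in Section 2.
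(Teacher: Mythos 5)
Your proof is correct and takes essentially the same route as the paper: it specializes the fundamental relation $Td_U(\eta)\,ch_U(u)=z$ from (\ref{tdchcob}), then substitutes the expression for $Td_U(\eta)^{-1}$ given by Corollary 3.4 (formula (\ref{td})), which in turn rests on Theorem 3.1 and Lemma 3.2. Your remarks about the legitimacy of the power-series inversion and working over $\mathbb CP^\infty$ versus $\mathbb CP^N$ are a welcome bit of extra care, but the argument is the one the paper gives.
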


\begin{proof}
We use the fundamental relation (\ref{tdchcob}), which in these notations has the form
\beq{fun2}
Td_U(\eta)ch_U(u)=z.
\eeq
Comparing this formula with (\ref{td}) we have formula (\ref{CDN}) and the claim. 
\end{proof}

 As a corollary we have formula (\ref{hirzeb}) for the exponential generating function of any Hirzebruch genus $\Phi$ of all theta divisors.

%We can prove now also formula (\ref{hirzeb}) for the exponential generating function of any Hirzebruch genus $\Phi$ of all theta divisors.

The Hirzebruch genus in complex cobordisms \cite{Hirz-66} is a homomorphism $\Phi: \Omega_U \to \mathcal A$, where $\mathcal A$ is some algebra over $\mathbb Q$, determined by its characteristic power series
\beq{Q}
Q(z)=1+\sum_{n=1}^\infty a_n z^n, \quad a_n \in \mathcal A.
\eeq
For any Hirzebruch genus $\Phi$ with characteristic power series $Q(z)$ we have the following relations with the Chern-Dold character and Todd class
\beq{CDNh}
\Phi(ch_U(u))=\frac{z}{Q(z)}, \quad \Phi(Td_U(\eta))=Q(z)
\eeq
(see \cite{B-1970}). Combining this with Theorem 3.6 we have

\begin{Corollary}
The exponential generating function of $\Phi(\Theta^n)$ is 
\beq{hirzn}
z+\sum_{n=1}^\infty \Phi(\Theta^n)\frac{z^{n+1}}{(n+1)!}=\frac{z}{Q(z)}.
\eeq
\end{Corollary}

Consider now the special case of $\mathcal A=\Omega \otimes \mathbb Q$ and $\Phi: \Omega \to \mathcal A=\Omega \otimes \mathbb Q$ being the natural embedding.
Then from (\ref{CDNh}) we see that the corresponding series $Q(z)=z/\beta(z)$ with $\beta(z)$ given by (\ref{CDnew}).

Define the cobordism classes $v_n \in \Omega_U\otimes \mathbb Q$ as the coefficients of $Q_{v}(z)$ written in the form
\beq{qv}
Q_{v}(z)=1+\sum_{n=1}^\infty (-1)^nv_n \frac{z^n}{(n+1)!},
\eeq
with the series $Q_v$ defined by 
\beq{relQ}
\left(1+\sum_{n=1}^\infty (-1)^nv_n \frac{z^n}{(n+1)!}\right)\left(1+\sum_{n=1}^\infty [\Theta^n] \frac{z^n}{(n+1)!}\right)\equiv 1.
\eeq
In the theory of symmetric functions \cite{Mac} this corresponds to the duality $\omega$ between elementary symmetric functions $e_n$ and complete symmetric functions $h_n$ 
(see formula (2.6) in \cite{Mac}), where we substitute 
\beq{enhn}
e_n=\frac{\Theta^n}{(n+1)!}, \quad h_n=\frac{v_n}{(n+1)!}.
\eeq

The determinantal formula for this duality (see e.g. page 28 in \cite{Mac})
$$
h_n=\det (e_{1-i+j})_{1\leq i,j \leq n}
$$
allows to express $v_n$ as a polynomial of $t_k:=[\Theta^k], \,k=1,\dots, n$ with rational coefficients.
For example, 
 \beq{ex1}
 v_1=t_1, \,\, v_2=-t_2+\frac{3}{2}t_1^2, \,\, v_3=t_3-4t_1t_2+3t_1^3,
 \eeq
 \beq{ex2}
 v_4=-t_4+5t_1t_3-15t_1^2t_2+\frac{10}{3}t_2^2+\frac{15}{2}t_1^4,
\eeq
\beq{ex3}
 v_5=t_5-6t_1 t_4+30t_1 t_2^2-60t_1^3t_2-10t_2t_3+\frac{45}{2}t_1^2t_3+\frac{45}{2}t_1^5.
\eeq
%\beq{ex3}
% v_5=[\Theta^5]-6[\Theta^1][\Theta^4]+30[\Theta^1][\Theta^2]^2-60[\Theta^1]^3[\Theta^2]-10[\Theta^2][\Theta^3]
%\eeq
%$$
%+\frac{45}{2}[\Theta^1]^2[\Theta^3]+\frac{45}{2}[\Theta^1]^5.
%$$
In fact, since the series in (\ref{relQ}) are the exponential generating functions of $y_n=\frac{v_n}{n+1}$ and $x_n=\frac{[\Theta^n]}{n+1}$ respectively, we can apply the results about Hurwitz series \cite{Hur,PS} to deduce that
$y_n\in \mathbb Z[x_1,\dots,x_n]$ is a polynomial with integer coefficients (which is clearly not the case in the formulae for $v_n$ above).
 %$\frac{v_n}{n+1}$ is a polynomial  of $\frac{[\Theta^k]}{k+1}, \,k=1,\dots, n$ with integer coefficients. 
 
 Let $v_\lambda=v_{i_1}\dots v_{i_k}, \, \lambda=(i_1,\dots,i_k)$ then the cobordism classes $v_\lambda$ considered as elements of $S^*$ form a basis dual to the Landweber-Novikov operations $\bar S_\lambda$ defined using the tangent bundles (see Novikov \cite{Nov-67}). In particular, the (usual, tangent) characteristic numbers $c_\lambda(v_n)=0$ for any $\lambda\neq (n)$, and $$c_{(n)}(v_n)=(-1)^n(n+1)!.$$
 
\begin{Theorem}
For any $U$-manifold $M^{2n}$ we have the following analogue of formula (\ref{decom}):
\beq{decomt}
[M^{2n}]=\sum_{\lambda: |\lambda|=n}(-1)^{|\lambda|}c_\lambda(M^{2n})\frac{v_\lambda}{(\lambda+1)!},
\eeq
where instead of $c^\nu_\lambda(M^{2n})$ we use the characteristic numbers $c_\lambda(M^{2n})$ of the tangent bundle $TM^{2n}.$

The Hirzebruch genus $\Phi(v_n)$ with characteristic power series $Q(z)$ can be found from the generating function 
\beq{hirznn}
1+\sum_{n=1}^\infty (-1)^n\Phi(v_n)\frac{z^{n}}{(n+1)!}=Q(z).
\eeq
\end{Theorem}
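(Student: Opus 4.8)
The plan is to prove Theorem 3.7 as a dual counterpart of the already established formula (\ref{decom}) and Theorem 3.5, exploiting the $\omega$-duality in the ring of symmetric functions. First I would recall that by Corollary 3.3 (formula (\ref{decom})) we have the expansion $[M^{2n}]=\sum_{|\lambda|=n}c^\nu_\lambda(M^{2n})\,\Theta^\lambda/(\lambda+1)!$ in terms of the normal Chern numbers, and that the Milnor--Novikov theorem guarantees such expansions are determined by the Chern numbers alone. The key point is then that passing from the normal bundle $\nu M$ to the tangent bundle $TM$ is exactly the involution $\omega$ under the substitution $e_n=\Theta^n/(n+1)!$, $h_n=v_n/(n+1)!$ specified in (\ref{enhn}). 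Concretely, the classical identity $\sum_\lambda b_\lambda c_\lambda(M^{2n})=\sum_\lambda a_\lambda c^\nu_\lambda(M^{2n})$ (already invoked in the proof of Theorem 3.6) together with the determinantal formula expressing $h_n$ through the $e_n$'s lets one rewrite (\ref{decom}) with tangent Chern numbers, producing (\ref{decomt}) with the sign $(-1)^{|\lambda|}$ accounting for the relation between $c_\lambda(-\xi)$ and $c_\lambda(\xi)$.

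More precisely, I would argue as follows. Apply $Td_U$ to the tangent bundle via Theorem 3.1, but now regroup: since $Td_U(TM)Td_U(\nu M)=Td_U(TM\oplus\nu M)=1$, the class $[M^{2n}]=(Td_U(TM),\langle M\rangle)$ can be computed from $(Td_U(-\nu M),\langle M\rangle)$, and expanding $Td_U(-\nu M)$ using (\ref{TODD}) with $\xi=-\nu M$ gives $\sum_\lambda c_\lambda(\nu M)\,[\Theta^\lambda]/(\lambda+1)!$ — which is exactly (\ref{decom}) again. To get the tangent-bundle version, I substitute $c_\lambda(\nu M)$ in terms of $c_\lambda(TM)$ using $c(\nu M)=c(TM)^{-1}$; on the level of characteristic numbers this is precisely the $\omega$-duality, and matching it against the defining relation (\ref{relQ}) of the $v_n$ identifies the dual basis: the coefficient of $c_\lambda(TM)$ becomes $(-1)^{|\lambda|}v_\lambda/(\lambda+1)!$. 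This is most cleanly verified by evaluating both sides on the theta divisors $\Theta^n$ themselves, where by Lemma 3.2 only $\lambda=(n)$ survives on the normal side, so one only needs the single identity $\sum$-expansion relating $[\Theta^n]$ to the $v_k$, $k\le n$, which is (\ref{relQ}).

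For the genus formula (\ref{hirznn}), I would simply apply the ring homomorphism $\Phi$ with characteristic series $Q(z)$ to the defining identity (\ref{relQ}). Since $\Phi([\Theta^n])/(n+1)!$ assembles into $1-z/Q(z)$... more precisely into the series whose reciprocal is $Q(z)$ by Theorem 3.6 (formula (\ref{hirzn}) gives $\sum_n\Phi(\Theta^n)z^{n+1}/(n+1)! = z/Q(z)-z$, equivalently $1+\sum_n\Phi(\Theta^n)z^n/(n+1)!$ has reciprocal... ), applying $\Phi$ to (\ref{relQ}) yields $\bigl(1+\sum_n(-1)^n\Phi(v_n)z^n/(n+1)!\bigr)\cdot\bigl(z/Q(z)\cdot z^{-1}\bigr)\equiv 1$ after identifying the second factor, hence $1+\sum_n(-1)^n\Phi(v_n)z^n/(n+1)!=Q(z)$. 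I would double-check the normalisation so that the constant terms match and the Todd case reproduces $Q(z)=z/(1-e^{-z})$ consistently with (\ref{todd1}).

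The main obstacle I anticipate is purely bookkeeping: getting the signs and the exact powers $(\lambda+1)!$ versus $(\lambda+1)$ consistent between the "exponential generating function" normalisation used in (\ref{decom}) and (\ref{hirzn}) and the "ordinary generating function / Hurwitz series" normalisation that governs (\ref{relQ}), together with correctly tracking that $c_\lambda(-\xi)$ versus $c_\lambda(\xi)$ introduces the factor $(-1)^{|\lambda|}$ only after the $\omega$-involution has been applied, not before. Beyond that, the argument is a formal consequence of Theorems 3.1, 3.5 and 3.6 and the standard symmetric-function duality already cited from \cite{Mac}, so no genuinely new geometric input is needed; the only subtlety worth stating explicitly is that $v_\lambda$, viewed in $S^*$, is dual to the tangent Landweber--Novikov operations $\bar S_\lambda$, which is what makes (\ref{decomt}) the "tangent" mirror of (\ref{decom}).
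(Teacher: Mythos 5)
Your argument is correct and follows the paper's (deliberately terse) proof, which simply cites Theorem 3.6 together with formulae (\ref{tan}) and (\ref{qv}): take $Q_a(z)=\beta(z)/z$, so that the normal-bundle genus $\sum a_\lambda c^\nu_\lambda(M^{2n})$ equals $[M^{2n}]$ by (\ref{decom}); by the symmetric-function duality recorded in the proof of Theorem 3.6 this coincides with the tangent-bundle genus built from $Q_b=Q_a^{-1}=Q_v(z)$, whose coefficients $b_n=(-1)^n v_n/(n+1)!$ give exactly (\ref{decomt}); and (\ref{hirznn}) follows by applying the ring homomorphism $\Phi$ to (\ref{relQ}) and using (\ref{hirzn}). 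You have correctly identified and unfolded precisely these ingredients, including the fact that the $v_\lambda$ are dual to the tangent operations $\bar S_\lambda$.

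One small misstep worth flagging: the aside that (\ref{decomt}) is ``most cleanly verified by evaluating both sides on the theta divisors $\Theta^n$ themselves, where by Lemma 3.2 only $\lambda=(n)$ survives'' does not work as stated. Lemma 3.2 concerns the \emph{normal} Chern numbers $c^\nu_\lambda(\Theta^n)$; the \emph{tangent} Chern numbers $c_\lambda(\Theta^n)$ entering (\ref{decomt}) certainly do not vanish for $\lambda\neq(n)$ --- they are determined by (\ref{rel}) and involve all of $\mathcal D,\dots,\mathcal D^n$. Fortunately this detour is unnecessary: the identity $\sum b_\lambda c_\lambda(M)=\sum a_\lambda c^\nu_\lambda(M)$ is universal in $M$, so (\ref{decomt}) follows for all $U$-manifolds at once from (\ref{decom}) and $Q_aQ_b\equiv1$, with no pointwise check on $\Theta^n$ needed.
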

 
 The proof follows directly from Corollary 3.7 and formulas (\ref{qv}),(\ref{relQ}).
 
 In particular, for the Euler characteristic with $Q(z)=1+z$ we conclude that $\chi(v_n)=0$ for all $n>1$ with $\chi(v_1)=-2.$
Similarly, for the Todd genus we have
 $$
 1+\sum_{n=1}^\infty (-1)^nTd(v_n) \frac{z^n}{(n+1)!}=\frac{z}{1-e^{-z}}=\sum_{n=0}^\infty (-1)^n B_n\frac{z^n}{n!},
 $$
where $B_n$ are the Bernoulli numbers. This implies that
\beq{todtn}
Td(v_n)=(n+1)B_n
\eeq
is zero for odd $n>1$ and non-zero rational for even $n$. In particular, $Td(v_2)=\frac{1}{2}$, so the cobordism class $v_2$ cannot be represented by a $U$-manifold.
%Note that the Euler characteristic $\chi(v_n)=0$ for all $n>1$ with $\chi(v_1)=-2.$

A natural question is what is the minimal integer $k_n\in \mathbb N$ such that $
k_n v_n \in \Omega_U$
is a cobordism class of some $U$-manifold.
The following result gives the answer to this question.

Let $q_n$ be the denominator of the fraction $(n+1)B_n$ written in the simplest form, where for odd $n>1$ with $B_n=0$ we put $q_n=1.$

\begin{Theorem}
The cobordism class 
\beq{vn}
q_n v_n =[V^{2n}]\in \Omega_U
\eeq for some $U$-manifold $V^{2n}$, and this is the smallest multiple of $v_n$ with such property.
\end{Theorem}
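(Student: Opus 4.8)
I would treat the two halves of the statement separately, the minimality being elementary and the realisability carrying all the content.

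\emph{Minimality.} If $k\in\mathbb N$ is such that $kv_n=[V^{2n}]\in\Omega_U$ for a $U$-manifold $V^{2n}$, then applying the Todd genus, which is a $\mathbb Z$-valued ring homomorphism on $\Omega_U$, and using \mref{todtn}, gives $k(n+1)B_n=Td(V^{2n})\in\mathbb Z$. Hence the denominator $q_n$ of $(n+1)B_n$ divides $k$, so $q_n$ is the only possible value of the minimal multiple, and it remains to produce a $U$-manifold with cobordism class $q_nv_n$.

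\emph{Realisability of $q_nv_n$.} Here I would invoke the Hattori--Stong realisability criterion (see \cite{Stong-68}; this is exactly the principle alluded to after \mref{Todd}, that all integrality obstructions for $U$-manifolds are detected by the Todd genus of Landweber--Novikov transforms): a class $x\in\Omega_U\otimes\mathbb Q$ lies in $\Omega_U$ if and only if $Td(S_\omega x)\in\mathbb Z$ for every Landweber--Novikov operation $S_\omega$. So it is enough to show $q_n\,Td(S_\omega v_n)\in\mathbb Z$ for all partitions $\omega$. By its definition (the paragraph around \mref{qv}), $v_n$ is the unique element of $\Omega_U\otimes\mathbb Q$ whose tangent Chern numbers all vanish except $c_{(n)}(v_n)=(-1)^n(n+1)!$. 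Since the total Landweber--Novikov operation is a ring homomorphism that reparametrises the universal formal group, composing it with the Todd genus again produces a Hirzebruch-type genus, whose characteristic series is obtained from $z/(1-e^{-z})$ by the substitution induced by $g\mapsto g\bigl(1+\sum_{k\ge1}t_k g^k\bigr)$ on the multiplicative formal group with exponential $g(z)=1-e^{-z}$; that is, the new series is $\dfrac{z}{(1-e^{-z})\bigl(1+\sum_k t_k(1-e^{-z})^k\bigr)}$. Substituting this into the generating function \mref{hirznn} and extracting the coefficient of $z^n$, one finds, for a non-empty partition $\omega$, that $Td(S_\omega v_n)$ is $(n+1)!$ times an integer combinatorial factor times the coefficient of $z^n$ in $z\,(1-e^{-z})^{|\omega|-1}$; since that coefficient equals the coefficient of $z^{n-1}$ in $(1-e^{-z})^{|\omega|-1}$, a rational number whose denominator divides $(n-1)!$ and whose numerator is a Stirling number of the second kind, it is cleared by $(n+1)!=(n+1)n\,(n-1)!$, so $Td(S_\omega v_n)\in\mathbb Z$. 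For the empty partition $Td(S_\emptyset v_n)=Td(v_n)=(n+1)B_n$, of denominator $q_n$. Therefore $q_n\,Td(S_\omega v_n)\in\mathbb Z$ for every $\omega$, and the criterion delivers $q_nv_n=[V^{2n}]\in\Omega_U$.

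\emph{Main obstacle and remarks.} The whole difficulty sits in the second step. Two points need care: the realisability criterion used is the Hattori--Stong theorem and should be quoted rather than reproved; and the coefficient bookkeeping must be done carefully enough to confirm that every non-empty $\omega$ contributes only a denominator dividing $(n-1)!$, which is precisely absorbed by the normalisation $(n+1)!$, leaving $Td(v_n)=(n+1)B_n$ as the single genuine obstruction. Two further remarks: the manifold $V^{2n}$ cannot in general be taken inside the subring $\Theta_U$ of \mref{thetau}, since already for $n=5$ one has $q_5=1$ while $v_5\notin\Theta_U$ --- its expansion \mref{ex3} in the $[\Theta^\lambda]$ has a coefficient $\tfrac{45}{2}$ --- so a genuinely new $U$-manifold is required; and since $\{k\in\mathbb Q:kv_n\in\Omega_U\}$ is a rank-one subgroup of $(\mathbb Q,+)$, ``smallest multiple'' may be read as smallest positive rational multiple, the two steps above then showing that its positive generator is the integer $q_n$.
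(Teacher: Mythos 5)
Your proposal is correct and the overall logic (minimality from the Todd genus, realisability via the Hattori--Stong criterion) coincides with the paper's; the difference lies entirely in how you verify the Hattori--Stong hypothesis, and it is a genuine one. The paper proves the structural Lemma~3.9: for every Landweber--Novikov operation $S_\lambda$ with $\lambda\neq\emptyset$, the class $S_\lambda(v_n)$ lies in the integral subring $\Theta_U=\mathbb Z[\,t_1,t_2,\dots]\subset\Omega_U$ generated by the theta divisors. This is extracted by applying $S_{(k)}$ to the defining identity $Q_v(z)\beta(z)=z$, which gives $S_{(k)}(Q_v)=-z\beta(z)^{k-1}$, then using the Hurwitz-series (exponential generating function) property of $\beta(z)$ plus multiplicativity; integrality of $Td(S_\lambda(v_n))$ then follows trivially because $\Theta_U\subset\Omega_U$. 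You instead bypass any $\Theta_U$-invariance and compute $Td(S_\omega v_n)$ head-on: the total Landweber--Novikov operation sends $\beta$ to $\beta\bigl(1+\sum_k t_k\beta^k\bigr)$, so $Td\circ S_{\mathrm{tot}}$ has characteristic series $z/\bigl(g(1+\sum_k t_k g^k)\bigr)$ with $g=1-e^{-z}$, and the $t^\omega$-coefficient is a multinomial integer times $zg^{|\omega|-1}$ whose $z^n$-coefficient (a normalised Stirling number) has denominator dividing $(n-1)!$, hence is cleared by the prefactor $(n+1)!$. Both arguments are in the end manipulations of the same identity $Q_v\beta=z$, but the paper's version buys more: Lemma~3.9 is a reusable structural statement about $\Theta_U$-invariance that the paper exploits again (Corollary~4.7 and the final section), whereas your version proves only the integrality actually needed here and leaves the multinomial/Stirling bookkeeping merely indicated rather than derived --- that is the one place that would need to be written out carefully to make this a complete proof. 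Your closing remark that $q_5 v_5=v_5\notin\Theta_U$ (from the coefficient $\tfrac{45}{2}$ in \mref{ex3}) is a nice observation, though not needed for the statement.
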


\begin{proof}
Let $k_n v_n =[M^{2n}]\in \Omega_U$, then the Todd genus $k_n(n+1)B_n$ must be an integer, which means that $k_n$ is divisible by $q_n,$ and thus $k_n=q_n$ must be minimal.

To prove that the cobordism class $
q_n v_n =[V^{2n}]$ for some $U$-manifold $V^{2n}$ we use the fundamental result of Hattori \cite{Hat-66} and Stong \cite{Stong-65, Stong-68}, saying that element $a \in \Omega_U\otimes \mathbb Q$ 
belongs to $\Omega_U$ if and only if the Todd genus of all the results $S_\lambda(a)$ of the Landweber-Novikov operations applied to $a$ are integer.
This is obviously true for the Todd genus of $a=q_n v_n$ since $Td(a)=q_n(n+1)B_n.$  We have the following important result. 

\begin{Lemma}
Every Landweber-Novikov operation $S_\lambda$, different from the identity, maps the cobordism class $v_n$ to the subring $\Theta_U=\mathbb Z[t_1,\,t_2, \dots ]\subset\Omega_U$ generated by the theta divisors $t_k=[\Theta^k], \, k \in \mathbb N.$

In particular, $S_{(1)} (v_1)=-2$, $S_{(1)} (v_n)=0$ for $n>1$ and for $n\geq 2$ 
$
S_{(2)} (v_n)=-n(n+1)[\Theta^{n-2}].
$ 
\end{Lemma}

We will use the following properties of the Landweber-Novikov operations: 
\beq{bet}
S_{(k)} (\beta(z))=\beta(z)^{k+1},
\eeq
where $\beta(z)$ is given by (\ref{CDnew}), and $S_{\lambda} (\beta(z))=0$ for all non one-part partitions (see the next section for more details).

We will prove Lemma by induction in the length of $\lambda.$ 
For length one partition $\lambda=(k)$ we apply the operation $S_{(k)}$ to the relation (\ref{relQ}), which we rewrite as $Q_v(z) \beta(z)=z$, to have
$$
S_{(k)} (Q_v(z) \beta(z))=S_{(k)}Q_v(z) \beta(z)+Q_v(z) \beta(z)^{k+1}=0,
$$
where we have used (\ref{bet}). Since $Q_v(z) \beta(z)=z$ this implies that
$$
S_{(k)} (Q_v(z))=\sum_{n=1}^\infty (-1)^n S_{(k)} (v_n) \frac{z^n}{(n+1)!}=-z\beta(z)^{k-1}.
$$
In particular, for $k=1$ we have $S_{(1)} (Q_v(z))=-z$, so $S_{(1)} (v_n)=0$ for $n>1$ and $S_{(1)} (v_1)=-2.$
For $k=2$ we have $S_{(2)} (Q_v(z))=-z\beta(z)$, which implies that for $n\geq 2$
$
S_{(2)} (v_n)=-n(n+1)[\Theta^{n-2}].
$
When $k>2$ we can write
$$
S_{(k)} (Q_v(z))=-z\beta(z)^{k-1}=-z
S_{(k-2)} (\beta(z))
$$
and use the induction in $k$ to conclude the proof for one-part partitions. 

The general case follows from the multiplicative property (\ref{multS}) of the Landweber-Novikov operations and the fact that $S_{\lambda} (\beta(z))=0$ for all partitions $\lambda$ of length more than one.

Now the claim of the Theorem follows from Lemma and Stong and Hattori results, since the Todd genus of any $U$-manifold is integer.
\end{proof}

\begin{Remark} It is natural to introduce also the following realisation of the power sums in complex cobordisms $p_n=\frac{(-1)^{n-1}}{(n-1)!}w_n$, where $w_n\in \Omega_U\otimes \mathbb Q$ are defined by the formula
\beq{wn}
\beta(z)=z \exp\left(\sum_{n=1}^\infty w_n\frac{z^n}{n!}\right)
\eeq
(see \cite{Mac} and formulae  (\ref{qv}), (\ref{enhn}) above).
Applying the Landweber-Novikov operation $S_{(k)}$ to both sides and using (\ref{LN1}) we have
$$
\beta(z)^k=\sum_{n=1}^\infty [\Theta_{k-1}^{n-k}] \frac{z^n}{n!}=\sum_{n=1}^\infty S_{(k)}(w_n) \frac{z^n}{n!},
$$
which implies that
\beq{wns1}
S_{(k)}(w_n)=[\Theta_{k-1}^{n-k}]=S_{(k-1)}(\Theta^{n-1}).
\eeq
Up to a multiple, $w_n$ coincide with the cobordism classes of the manifolds $N^{2n}$ from Theorem 1.4 of the paper \cite{B-1970}.
\end{Remark}

\section{The Landweber-Novikov algebra and quantisation of complex cobordisms }

The Landweber-Novikov algebra introduced in \cite{L-1967, Nov-67} is an important subalgebra of all cohomological operations in complex cobordisms with additive basis given by the Landweber-Novikov operations $S_\lambda, \lambda \in \mathcal P$, where  $\mathcal P$ is the set of all partitions $\lambda=(i_1,\dots, i_k)$ (see \cite{Mac}).

%To define these operations 
%To prove Theorem 1.2 about the action on $\Theta^n$ of the Landweber-Novikov operations
Recall that the cobordism class $\alpha \in U^2(X)$ is called {\it geometric} if it belongs to the image of the natural homomorphism
$H^2(X,\mathbb Z) \to U^2(X)$ (see \cite{Nov-67}).
The action of the Landweber-Novikov operations on any geometric cobordism class $\alpha$ is defined as follows:
for any one-part partition $\lambda=(k), \,k\in \mathbb N$ the Landweber-Novikov operation $S_{(k)}(\alpha)=\alpha^{k+1},$ 
while for all other partitions $S_\lambda(\alpha)=0$  (see Lemma 5.6 in \cite{Nov-67}). 
Together with the rule
\beq{multS}
S_\lambda(xy) = \sum_{\lambda=(\lambda_1,\lambda_2)}S_{\lambda_1}(x)\otimes S_{\lambda_2}(y), \quad x,y \in U^*(X)
\eeq
this uniquely defines the operations $S_\lambda: U^*(X) \to U^*(X)$ for any $CW$-complex $X.$

The action of $S_\lambda$ on the cobordism class of $U$-manifold $M^{2n}$ can be given as
\beq{slm}
S_\lambda([M^{2n}])=p_\sharp(c_\lambda^U(\nu M^{2n})),
\eeq
where $p_\sharp: U^*(M^{2n})\to \Omega_U$ is the Gysin homomorphism for $p: M^{2n}\to pt$ and $c_\lambda^U(\nu M^{2n})\in U^{2|\lambda|}(M^{2n})$ are the Conner-Floyd characteristic classes of the normal bundle of $M^{2n}$ \cite{CF}. 

In particular, when $|\lambda|=n$  we have $c_\lambda^U(\nu M^{2n})\in U^{2n}(M^{2n})=\mathbb Z$ and  
\beq{slmn}
S_\lambda([M^{2n}])=c_\lambda^\nu(M^{2n})
\eeq 
are the characteristic numbers (\ref{chn}).

In this paper we consider the Landweber-Novikov algebra as the graded algebra $S$ over $\mathbb Q$ with a special basis $S_\lambda, \lambda \in \mathcal P,$ where the degree of $S_\lambda$ equals to $2|\lambda|=2(i_1+\dots+i_k)$ and the identity is defined as $S_\emptyset.$
%which form a special basis in this infinite-dimensional algebra with the degree of $S_\lambda$ equal to $2|\lambda|=2(i_1+\dots+i_k)$ and the identity which is defined as $S_\emptyset.$
It is also a Hopf algebra with the diagonal given by 
\beq{diag}
\triangle S_\lambda = \sum_{\lambda=(\lambda_1,\lambda_2)}S_{\lambda_1}\otimes S_{\lambda_2}.
\eeq
The diagonal is symmetric, so the dual graded Hopf algebra $S^* = Hom(S,\mathbb Q)$ is commutative. 

The following result, which can be extracted from \cite{L-1967,Nov-67}, plays an important role in constructions from \cite{BSh}.

\begin{prop} 
There is a canonical isomorphism of algebras 
\beq{sig}
\sigma: S^*\cong \Omega_U\otimes \mathbb Q.
\eeq
\end{prop}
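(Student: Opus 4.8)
The plan is to build the isomorphism $\sigma$ explicitly using the pairing between the Landweber--Novikov algebra and $\Omega_U\otimes\mathbb Q$ that comes from the action of the operations $S_\lambda$ on cobordism classes, and then identify the image of the dual basis with the normalised products of theta divisors $\frac{[\Theta^\lambda]}{(\lambda+1)!}$. First I would recall that, by the Milnor--Novikov theorem, $\Omega_U\otimes\mathbb Q=\mathbb Q[[\Theta^1],[\Theta^2],\dots]$ is a polynomial algebra, and that the classes $\frac{[\Theta^n]}{(n+1)!}$ play the role of the elementary symmetric functions $e_n$ under the identification of symmetric-function theory used throughout Section 3 (cf. formula (\ref{enhn})). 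On the other side, $S^*$ is, as a graded vector space, spanned by the dual basis $\{S^\lambda\}$ to $\{S_\lambda\}$, and as an algebra it is polynomial on the $S^{(n)}$ dual to the one-part operations, because the Hopf algebra $S$ is the free graded-commutative algebra under the coproduct (\ref{diag}) — equivalently, $S^*$ is polynomial on primitives. So both sides are polynomial $\mathbb Q$-algebras on countably many generators, one in each even degree $2n$, and the content is to match the generators and check multiplicativity.

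The key step is to define $\sigma\colon S^*\to\Omega_U\otimes\mathbb Q$ on the generators by $\sigma(S^{(n)}):=\frac{[\Theta^n]}{(n+1)!}$ and extend multiplicatively, and then prove this is forced by — hence canonical with respect to — the Landweber--Novikov action. The cleanest way is via the nondegenerate pairing $\langle\,,\,\rangle\colon S\otimes(\Omega_U\otimes\mathbb Q)\to\mathbb Q$ given by $\langle S_\lambda,[M^{2n}]\rangle=$ the coefficient extracting the ``top'' value of $S_\lambda([M^{2n}])$ when $|\lambda|=n$ (and $0$ otherwise), which by (\ref{slmn}) is just the normal Chern number $c^\nu_\lambda(M^{2n})$. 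This pairing is perfect: Landweber and Novikov's results (see also \cite{BSh}) show precisely that the map $\Omega_U\otimes\mathbb Q\to S^*$, $[M]\mapsto\langle-,[M]\rangle$, is an isomorphism of coalgebras, and its inverse — turned into an algebra map using that the coproduct on $S$ dualises to the product on $\Omega_U\otimes\mathbb Q$ via (\ref{multS})/(\ref{diag}) — is exactly $\sigma$. Feeding Lemma 3.2 into this pairing gives $\langle S_\lambda,[\Theta^n]\rangle=0$ unless $\lambda=(n)$ and $\langle S_{(n)},[\Theta^n]\rangle=(n+1)!$; therefore the element of $\Omega_U\otimes\mathbb Q$ dual to $S_{(n)}$, divided out appropriately, is $\frac{[\Theta^n]}{(n+1)!}$, which pins down $\sigma(S^{(n)})$ and, by multiplicativity on both sides, establishes $\sigma(S^\lambda)=\frac{[\Theta^\lambda]}{(\lambda+1)!}$ along the way (this is formula (\ref{so}), proved as a byproduct).

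I expect the main obstacle to be the bookkeeping that makes ``canonical'' precise and consistent: one must check that the algebra structure on $S^*$ really is dual to the coalgebra structure (\ref{diag}) on $S$ and that this is compatible — under the perfect pairing above — with the multiplicative rule (\ref{multS}) for how $S_\lambda$ acts on products of $U$-manifolds, so that the set-theoretic inverse of $[M]\mapsto\langle-,[M]\rangle$ is genuinely a ring homomorphism and not merely a linear isomorphism. Once that compatibility is in place, the proof is essentially the observation that a graded algebra map between two polynomial algebras on generators in each degree $2n$ is determined by what it does on one generator per degree, together with the explicit computation via Lemma 3.2 that identifies those generators. The grading ensures the sums involved are finite in each degree, so no convergence issue arises. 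I would close by remarking that this makes $S^*$ the ``dual Landweber--Novikov algebra'' and justifies using it as the deformation parameter space in the quantisation map (\ref{quant}).
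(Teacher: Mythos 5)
Your proposal is correct and follows essentially the same route as the paper: defining the pairing $\phi(S_\lambda,[M])=\mu(S_\lambda([M]))$ (which on degree $-2n$ classes reduces to the normal Chern numbers $c^\nu_\lambda$ when $|\lambda|=n$), invoking its nondegeneracy from Landweber--Novikov, and deducing multiplicativity of the resulting isomorphism from the compatibility of (\ref{multS}) with the coproduct (\ref{diag}). You go slightly further by also computing the dual basis via Lemma 3.2 to pin down $\sigma(S^\lambda)=[\Theta^\lambda]/(\lambda+1)!$, which the paper defers to Theorem 4.2, but the core argument is the same.
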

\begin{proof}
Let $\mu: U^*(X) \to H^*(X,\mathbb Z)$ be the {\it cycle realisation homomorphism}, which is defined uniquely by the property 
$\mu(u)=z,$ where as before $u \in U^2(\mathbb CP^\infty)$ and $z\in H^2(\mathbb CP^\infty, \mathbb Z)$ be the first Chern classes of the universal line bundle $\eta$
over $\mathbb CP^\infty$ in the complex cobordisms and cohomology theory respectively. In particular, when $X=pt$ is a point we have the {\it augmentation ring homomorphism}
$\mu: \Omega_U \to \mathbb Z$, which is 1 on the identity and 0 on elements with non-zero grading.

We have the canonical graded pairing between Landweber-Novikov algebra $S$ and $\Omega_U\otimes \mathbb Q$:
\beq{pair}
\phi: S\otimes (\Omega_U\otimes \mathbb Q) \to \mathbb Q, \quad S_\lambda \otimes [M] \to \mu(S_\lambda ([M])).
\eeq
which is known to be non-degenerate \cite{Nov-67}. This implies the canonical isomorphism  
$
\sigma: S^* \cong \Omega_U\otimes\mathbb Q,
$
which is also the algebra isomorphism as it follows from (\ref{multS}) and (\ref{diag}).
\end{proof}
 
 A natural question is what is the basis $S^\lambda \in \Omega_U\otimes\mathbb Q$ dual to the Landweber-Novikov operations $S_\lambda,\, \lambda \in \mathcal P.$ We can now give an explicit answer to this question.
 
\begin{Theorem}
The dual basis to Landweber-Novikov operations $S_\lambda, \, \lambda \in \mathcal P$ in $\Omega_U\otimes\mathbb Q$ is given by
\beq{Sstar}
\sigma(S^\lambda)=\frac{[\Theta^\lambda]}{(\lambda+1)!},
\eeq
where $\Theta^\lambda$ is the product of theta divisors (\ref{prodt}).
\end{Theorem}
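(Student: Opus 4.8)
The plan is to verify directly that $[\Theta^\lambda]/(\lambda+1)!$ is the element of $\Omega_U\otimes\mathbb Q$ dual to $S_\mu$ under the canonical pairing $\phi$ from Proposition 4.1. Recall that $\phi(S_\mu,[M]) = \mu(S_\mu([M]))$, and that $\sigma$ is the isomorphism induced by $\phi$, so $\sigma(S^\lambda) = [M]$ precisely when $\phi(S_\mu,[M]) = \langle S^\lambda, S_\mu\rangle = \delta_{\mu\lambda}$ for every partition $\mu$. Since $\phi$ is a graded pairing, $\phi(S_\mu,[\Theta^\lambda])$ vanishes unless $|\mu| = |\lambda| =: n$, and for $|\mu| = n$ formula (\ref{slmn}) identifies $S_\mu([\Theta^\lambda])$ with the normal Chern number $c^\nu_\mu(\Theta^\lambda)$. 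Thus the theorem reduces to the single identity
\[
c^\nu_\mu(\Theta^\lambda) = (\lambda+1)!\,\delta_{\mu\lambda}, \qquad |\mu| = |\lambda|.
\]

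To prove this, write $\lambda = (i_1,\dots,i_k)$. Since the stable normal bundle of a product is the sum of the pulled-back normal bundles, $\nu(\Theta^\lambda)$ is stably $\bigoplus_{j=1}^{k}\pi_j^*(\nu\Theta^{i_j})$, and by Lemma 3.2 each $\nu\Theta^{i_j}$ is stably the line bundle $i_j^*(L)$ with first Chern class $\mathcal D_j$ satisfying $(\mathcal D_j^{\,i_j},\langle\Theta^{i_j}\rangle) = (i_j+1)!$ (which ultimately comes from $D^g = g! \in H^{2g}(A^g,\mathbb Z)$). Hence $\nu(\Theta^\lambda)$ has Chern roots $\pi_1^*\mathcal D_1,\dots,\pi_k^*\mathcal D_k$, so by the splitting principle $c_\mu(\nu\Theta^\lambda) = m_\mu(\pi_1^*\mathcal D_1,\dots,\pi_k^*\mathcal D_k)$, and evaluating this on $\langle\Theta^\lambda\rangle = \langle\Theta^{i_1}\rangle\times\cdots\times\langle\Theta^{i_k}\rangle$ via K\"unneth kills every monomial except $\mathcal D_1^{\,i_1}\cdots\mathcal D_k^{\,i_k}$, which contributes $\prod_j(i_j+1)! = (\lambda+1)!$. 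Since this one monomial occurs in $m_\mu$, with coefficient one, exactly when $\mu$ is a rearrangement of $(i_1,\dots,i_k)$, i.e. $\mu = \lambda$, and since $m_\mu$ in the $k$ variables $\pi_1^*\mathcal D_1,\dots,\pi_k^*\mathcal D_k$ vanishes identically when $\mu$ has more than $k$ parts, the displayed identity follows.

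Putting this together with the grading vanishing yields $\phi(S_\mu,[\Theta^\lambda]/(\lambda+1)!) = \delta_{\mu\lambda}$ for all $\mu$, hence $\sigma(S^\lambda) = [\Theta^\lambda]/(\lambda+1)!$. The only point requiring care is the K\"unneth evaluation: one must check that no monomial other than $\mathcal D_1^{\,i_1}\cdots\mathcal D_k^{\,i_k}$ survives the pairing with the fundamental class, and that a repeated part of $\lambda$ produces no extra multiplicity — it does not, since $\mathcal D_1^{\,i_1}\cdots\mathcal D_k^{\,i_k}$ still appears exactly once in $m_\lambda$. A more structural alternative is to exploit the multiplicativity of $\phi$ with respect to the coproduct (\ref{diag}) together with the one-divisor case $\phi(S_\mu,[\Theta^i]) = (i+1)!\,\delta_{\mu,(i)}$ (which is just Lemma 3.2) to reduce everything to one-part partitions; but the direct computation above makes the numerical factor $(\lambda+1)!$ most transparent.
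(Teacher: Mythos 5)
Your proof is correct and goes through, but it takes a more computational route than the paper's. The paper's argument is terse: it establishes the single-divisor identity $S_\mu([\Theta^n])=(n+1)!\,\delta_{\mu,(n)}$ (which is exactly (\ref{slmn}) combined with Lemma 3.2) and then simply invokes ``the multiplicativity property of the Landweber-Novikov operations,'' i.e.\ the Hopf compatibility encoded in (\ref{multS}) and (\ref{diag}), to pass from the generators $[\Theta^n]$ to the products $[\Theta^\lambda]$; this is precisely the ``structural alternative'' you mention in your closing paragraph. Your primary argument instead evaluates $c^\nu_\mu(\Theta^\lambda)$ directly from the splitting principle and the K\"unneth formula: the stable normal bundle of $\Theta^\lambda$ has Chern roots $\pi_1^*\mathcal D_1,\dots,\pi_k^*\mathcal D_k$, so $c_\mu(\nu\Theta^\lambda)=m_\mu(\pi_1^*\mathcal D_1,\dots,\pi_k^*\mathcal D_k)$, and pairing with $\langle\Theta^{i_1}\rangle\times\cdots\times\langle\Theta^{i_k}\rangle$ isolates the unique surviving monomial $\mathcal D_1^{\,i_1}\cdots\mathcal D_k^{\,i_k}$, which appears in $m_\mu$ with coefficient one exactly when $\mu=\lambda$. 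Your care about repeated parts is warranted and correctly resolved. The trade-off is clear: the paper's route is shorter and makes manifest at once that $\sigma$ is an algebra isomorphism respecting the basis of products, while your K\"unneth computation re-derives in this special case the multiplicative structure that the Hopf algebra already packages, at the price of a few more lines but with the benefit that the numerical factor $(\lambda+1)!$ and the orthogonality $c^\nu_\mu(\Theta^\lambda)=(\lambda+1)!\,\delta_{\mu\lambda}$ are made completely explicit without any appeal to the coproduct.
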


\begin{proof}
%Since $\Theta^n\subset A^{n+1}$ realises the geometric cobordism class in $U^2(A^{n+1})$ corresponding to the principal polarisation $D=c_1(\mathcal L)\in H^2(A^{n+1}, \mathbb Z)$ we conclude that
Indeed, from formula (\ref{slmn}) and Lemma 3.2 we have 
$S_\lambda([\Theta^n])=0$
for any partition $\lambda$ of $n$ different from $\lambda=(n)$, and
$S_{(n)}([\Theta^n])=(n+1)!.$
Now the claim follows from the multiplicativity property of the Landweber-Novikov operations.
\end{proof}

We can now describe the action of the Landweber-Novikov operations on the theta divisors and to prove Theorem 1.2.

Let $[\Theta_{k}^{n-k}]$ be the cobordism class of the intersection divisor (\ref{inters}).
\begin{Theorem}
For the one-part partitions 
$\lambda=(k), \, k\leq n$ we have
\beq{LN1}
S_{(k)}([\Theta^n])=[\Theta_{k}^{n-k}],
\eeq
while for all other partitions $S_\lambda[\Theta^n]=0.$
\end{Theorem}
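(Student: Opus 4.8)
The plan is to establish that $S_\lambda[\Theta^n]$ vanishes for all non-one-part partitions and equals $[\Theta_k^{n-k}]$ when $\lambda=(k)$, by exploiting the geometric description of Landweber-Novikov operations together with the fact that the normal bundle of $\Theta^n$ in $A^{n+1}$ is (stably) the line bundle $\mathcal L=i^*(L)$. First I would use formula (\ref{slm}): $S_\lambda([\Theta^n])=p_\sharp(c_\lambda^U(\nu\Theta^n))$, where $p\colon\Theta^n\to pt$ and $c_\lambda^U$ are the Conner--Floyd classes of the stable normal bundle. Since $\nu\Theta^n$ is stably equivalent to the \emph{line} bundle $\mathcal L$, its Conner--Floyd Chern classes $c_j^U(\nu\Theta^n)$ vanish for $j\geq 2$, and hence $c_\lambda^U(\nu\Theta^n)=0$ for any partition $\lambda$ that is not a one-part partition. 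This immediately gives $S_\lambda[\Theta^n]=0$ for $\lambda$ of length $\geq 2$, and reduces the problem to the one-part case $\lambda=(k)$.

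For $\lambda=(k)$ the key point is to identify $c_{(k)}^U(\mathcal L)\in U^{2k}(\Theta^n)$. Here I would invoke the description of the action of Landweber--Novikov operations on \emph{geometric} cobordism classes recalled in Section 4: the cobordism Euler class $c_1^U(\mathcal L)=:\alpha\in U^2(\Theta^n)$ is geometric, and for a geometric class one has $S_{(k)}(\alpha)=\alpha^{k+1}$. More precisely, $c_{(k)}^U(\mathcal L)$ is the degree-$2k$ component coming from the formal group: for a line bundle, $c_{(k)}^U$ is obtained by applying the $k$-th Landweber--Novikov-type expansion, so that $p_\sharp(c_{(k)}^U(\mathcal L))=p_\sharp(S_{(k-1)}(\alpha))$-type expressions. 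The cleaner route is: the cobordism class $\alpha=c_1^U(\mathcal L)\in U^2(\Theta^n)$ is Poincar\'e dual to the bordism class of the zero locus of a generic section of $\mathcal L$, i.e.\ to $\Theta^n\cap\Theta^n(a_1)=\Theta_1^{n-1}$; more generally $\alpha^{k}$ is dual to $\Theta_k^{n-k}$, the intersection with $k$ generic translates, because $\mathcal L=i^*(L)$ and successive sections of $L$ cut out the complete intersections $\Theta_k^{n-k}$ as in (\ref{inters})--(\ref{intersec}). Then $S_{(k)}([\Theta^n])=p_\sharp(c_{(k)}^U(\nu\Theta^n))=p_\sharp(\alpha^{k})\cdot(\text{normalisation})$, and pushing forward the class dual to $\Theta_k^{n-k}$ to a point yields exactly $[\Theta_k^{n-k}]$, by the property of the Gysin map recalled before Theorem~2.7 ($f_\sharp a=[M^{2n-2k}]$ when $a$ is dual to a submanifold $M^{2n-2k}$).

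The main obstacle is getting the combinatorics of Conner--Floyd classes of a line bundle exactly right: for a line bundle $\mathcal L$ the higher Chern classes vanish in ordinary cohomology, but in complex cobordism the class $c_{(k)}^U$ corresponding to the monomial symmetric function $m_{(k)}$ is \emph{not} simply $c_1^U(\mathcal L)^{k}$ --- one must pass through the formal-group power series, since $c^U(\mathcal L\oplus\mathcal L)\neq c^U(\mathcal L)^2$ for the cobordism formal group. So the careful step is to verify that, nonetheless, the image $p_\sharp(c_{(k)}^U(\nu\Theta^n))$ equals $[\Theta_k^{n-k}]$ exactly (no formal-group correction terms survive after push-forward to a point), equivalently to check (\ref{LN1}) against the residue formula (\ref{res}) and against all Chern numbers using the already-established Corollary~\ref{} on characteristic numbers. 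I would cross-check consistency with the case $k=n$, where $S_{(n)}([\Theta^n])=(n+1)!=[\Theta_n^0]$, which matches Lemma~3.2 since $\Theta_n^0$ is $(n+1)!$ points, and with the formula $S_{(k)}(\beta(z))=\beta(z)^{k+1}$ from (\ref{bet}): applying $S_{(k)}$ to (\ref{CDnew}) term by term and comparing coefficients gives precisely (\ref{res}), confirming $[\Theta_k^{n-k}]=\frac{(n+1)!}{2\pi i}\oint\beta(z)^{k+1}\frac{dz}{z^{n+2}}$, which closes the argument.
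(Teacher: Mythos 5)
Your approach is essentially the paper's: vanishing on non-one-part partitions because $\nu\Theta^n$ is stably the line bundle $\mathcal L=i^*(L)$, then identifying $p_\sharp(c_{(k)}^U(\mathcal L))$ with $[\Theta_k^{n-k}]$ via Gysin maps and duality. However, two points should be corrected.

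First, the worry about formal-group corrections to $c_{(k)}^U(\mathcal L)$ is a phantom. For a single line bundle $\mathcal L$ there is exactly one Conner--Floyd Chern root $\alpha=c_1^U(\mathcal L)$, and the class corresponding to the monomial symmetric function $m_{(k)}$ is $m_{(k)}(\alpha)=\alpha^k$, with no correction: $c_{(k)}^U(\mathcal L)=c_1^U(\mathcal L)^k$ on the nose, while $c_\lambda^U(\mathcal L)=0$ for any $\lambda$ with more than one part (since $m_\lambda$ needs at least $\ell(\lambda)$ variables). The formal group law enters only when computing $c_1^U$ of a tensor product of line bundles, not for Whitney sums; in fact $c^U(\mathcal L\oplus\mathcal L)=c^U(\mathcal L)^2$ is just the Whitney formula. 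So the parenthetical ``$\cdot(\text{normalisation})$'' should be deleted --- the identity $c_{(k)}^U(\nu\Theta^n)=\alpha^k$ is exact, and the intermediate ``$p_\sharp(S_{(k-1)}(\alpha))$-type expressions'' remark is spurious.

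Second, the proposed cross-check that is supposed to ``close the argument'' is circular. Applying $S_{(k)}$ to (\ref{CDnew}) term by term and using (\ref{bet}) only gives you $\sum_n S_{(k)}[\Theta^n]\,\frac{z^{n+1}}{(n+1)!}=\beta(z)^{k+1}$, i.e.\ a formula for $S_{(k)}[\Theta^n]$ as a residue; it does not identify that residue with $[\Theta_k^{n-k}]$. Indeed (\ref{res}) is deduced in the paper from the present theorem, not the other way round. The identification must come from geometry, and that is precisely what the Gysin computation supplies. The paper does it by pulling up to $A^{n+1}$: with $[1]\in U^0(\Theta^n)$, $[D]=c_1^U(L)\in U^2(A^{n+1})$ and $i_\sharp[1]=[D]$, one has $c_{(k)}^U(\mathcal L)=i^*[D]^k$, and the projection formula (\ref{p2}) gives $i_\sharp(i^*[D]^k)=i_\sharp([1])\cdot[D]^k=[D]^{k+1}$; since $p=f\circ i$ with $f\colon A^{n+1}\to pt$, $S_{(k)}[\Theta^n]=f_\sharp[D]^{k+1}$, and $[D]^{k+1}$ is dual to the transversal intersection (\ref{inters}), so $f_\sharp[D]^{k+1}=[\Theta_k^{n-k}]$. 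Your variant working inside $\Theta^n$ (that $\alpha^k$ is dual to $\Theta_k^{n-k}\subset\Theta^n$ as the zero locus of a generic section of $\mathcal L^{\oplus k}$, whence $p_\sharp(\alpha^k)=[\Theta_k^{n-k}]$) is fine once the spurious normalisation is removed, but it should be stated as the actual proof rather than replaced by the circular residue check.
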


\begin{proof}
Recall that the normal bundle $\nu \Theta^n$ can be identified with $\mathcal L=i^*(L)$, where $i: \Theta^n \to A^{n+1}$ and $L$ is the principal polarisation bundle of $A^{n+1}$. 
Since $\mathcal L$ is of rank one from formula (\ref{slm}) and properties of Conner-Floyd characteristic classes it immediately follows that $S_\lambda([\Theta^n])=0$ if $\lambda$ is not a one-part partition. 

For the proof of formula (\ref{LN1}) we need the following properties of the Gysin homomorphism $f_\sharp: U^*(M_1)\to U^*(M_2)$ for any mapping of $U$-manifolds $f: M_1 \to M_2$
(see e.g. Proposition D.3.6 in \cite{BT}):
\beq{p2} 
f_\sharp(x f^*(y))=f_\sharp(x)y, \quad x\in U^*(M_1), \, y \in U^*(M_2)
\eeq
and for $f: M_1 \to M_2$ and $g: M_2 \to M_3$ we have
$(gf)_\sharp=g_\sharp f_\sharp.$

Let $[1]\in U^0(\Theta^n)$ be the cobordism class Poincar\'e-Atiyah dual to the identity map of $\Theta^n$ to itself, $[D] \in U^2(A^{n+1})$ be the cobordism class dual to the bordism class $\{\Theta^n\}\in U_{2n}(A^{n+1})$ defined by the embedding $i:\Theta^n \to A^{n+1},$ 
then by construction $i_\sharp([1])=[D].$ Note that $[D]=c_1^U(L)$ is the first Conner-Floyd class of the line bundle $L.$

Now for the one-part partition $\lambda=(k), \, k\leq n$ we have $$c_{(k)}^U(\nu \Theta^n)=c_{(k)}^U(\mathcal L)=c_{(k)}^U(i^*(L))=i^*(c_1^U(L)^k)=i^*[D]^k.$$  By (\ref{slm}) and the properties of Gysin homomorphisms  $$S_{(k)}([\Theta^n])=p_\sharp(i^*[D]^k)=f_\sharp i_\sharp(i^*[D]^k),$$ where $f$ and $p=f\circ i$ are the mappings to a point of $A^{n+1}$ and $\Theta^n$ respectively. Since $[1]$ is the identity in the ring $U^*(\Theta^n)$ we have using property (\ref{p2}) with $x=[1]$ and $y=[D]^k$ that $$i_\sharp(i^*[D]^k)=i_\sharp([1]\cdot i^*[D]^k)=i_\sharp([1]) [D]^k=[D]^{k+1},$$
and thus $S_{(k)}([\Theta^n])=f_\sharp([D]^{k+1}).$ By the definition of the Gysin homomorphism the cobordism class $f_\sharp([D]^{k+1})$ is dual to the transversal intersection of $k+1$ copies of cycles dual to $[D]$, which can be realised as the intersection of generally shifted theta-divisors (\ref{inters}). This means that
$
f_\sharp([D]^{k+1})=[\Theta_{k}^{n-k}],
$
which proves the theorem.
\end{proof}

Let us introduce now the {\it quantum complex cobordism theory} as the extraordinary cohomology theory $ QU^*:=U^*\otimes S^*$ with $QU^*(pt)=Q\Omega^*:=\Omega_U\otimes   S^*$.
The algebra $S^*$ is used here as the deformation parameter space.

Consider the following {\it quantisation map} $q^\star: U^*(X)\to U^*(X)\otimes S^*$
inspired by \cite{B-1995}:
%\beq{qCD}
%q^\star: U^*(X)\to U^*(X)\otimes S^*
%\eeq
%defined by
\beq{qCDf}
q^\star(x)=x\otimes 1+\sum_{\lambda}S_\lambda(x)\otimes S^\lambda.
\eeq
In particular, if $X=pt$ then $U^*(X)=\Omega_U$ and for any $U$-manifold $M^{2n}$ we have
\beq{qCDM2n}
q^\star(M^{2n})=[M^{2n}]\otimes 1+\sum_{\lambda: |\lambda|<n}S_\lambda([M^{2n}])\otimes S^\lambda + 1\otimes\sum_{\lambda: |\lambda|=n}c^\nu_\lambda(M^{2n}) S^\lambda.
\eeq
Here we have used that when $|\lambda|=n$ we have $S_\lambda ([M^{2n}])=c^\nu_\lambda(M^{2n})$ (see Novikov \cite{Nov-67}).

Introduce also the following quantum analogue of cycle realisation homomorphism as
\beq{mu*}
\mu^\star: U^*(X)\otimes S^* \to H^*(X, \Omega_U\otimes\mathbb Q), \quad \mu^\star=\mu \otimes \sigma,
\eeq
where we used the natural isomorphism $$H^*(X,\mathbb Z)\otimes (\Omega_U\otimes \mathbb Q)\cong H^*(X,\Omega_U\otimes \mathbb Q).$$
It can be viewed as a kind of ``dequantisation" map.

We claim that the Chern-Dold character in complex cobordisms is the composition of the quantisation and dequantisation maps:
$$
U^*(X)\stackrel{q^\star}{\longrightarrow}U^*(X)\otimes S^* \stackrel{\mu^\star}{\longrightarrow} H^*(X, \Omega_U\otimes\mathbb Q).
$$

%We should say that we understand quantisation here simply as deformation. In particular, it is different from a very interesting quantisation of the intersection theory proposed by %Coates and Givental \cite{CG}. 

% related to the Chern-Dold character $ch_U: U^*(X)\to H^*(X; \Omega_U^\mathbb Q)$ by the following commutative diagram:
%$$
%\begin{array}{ccc}
%U^*(X)&\stackrel{ch_U^\star}{\longrightarrow}&U^*(X)\otimes S^*\\ \downarrow
%\lefteqn{id}& &\downarrow \lefteqn{\mu\otimes \sigma}\\
%U^*(X)&\stackrel{ch_U}{\longrightarrow}&H^*(X,\Omega_U^ \mathbb Q) \\
%\end{array}

\begin{Theorem}
The quantisation map defined by (\ref{qCDf}) is the algebra homomorphism.
The Chern-Dold character $ch_U: U^*(X)\to H^*(X, \Omega_U\otimes\mathbb Q)$ is the composition 
\beq{chmu}
ch_U=\mu^\star \circ q^\star.
\eeq
\end{Theorem}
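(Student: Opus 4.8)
The plan is to establish the two assertions of Theorem 4.5 separately: first that $q^\star$ is multiplicative, then that $\mu^\star\circ q^\star=ch_U$. For the multiplicativity of $q^\star$, I would compute $q^\star(xy)$ directly from the definition (\ref{qCDf}) and the Cartan-type formula (\ref{multS}) for the Landweber-Novikov operations. Writing $q^\star(x)=\sum_\lambda S_\lambda(x)\otimes S^\lambda$ with the convention $S_\emptyset=\mathrm{id}$ and $S^\emptyset=1$, one has
\[
q^\star(x)q^\star(y)=\sum_{\lambda,\mu}S_\lambda(x)S_\mu(y)\otimes S^\lambda S^\mu,
\]
while $q^\star(xy)=\sum_\nu S_\nu(xy)\otimes S^\nu=\sum_\nu\Bigl(\sum_{\nu=(\lambda,\mu)}S_\lambda(x)S_\mu(y)\Bigr)\otimes S^\nu$. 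These agree precisely because the product on $S^*$ is dual to the coproduct (\ref{diag}) on $S$, i.e. $S^\lambda S^\mu=\sum_{\nu}\langle\triangle S_\nu,S^\lambda\otimes S^\mu\rangle S^\nu=\sum_{\nu:(\lambda,\mu)\text{ refines }\nu}S^\nu$; matching coefficients of $S^\nu$ on both sides gives the identity. This is the routine part.

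For the main equality I would argue that both $ch_U$ and $\mu^\star\circ q^\star$ are natural multiplicative transformations $U^*(X)\to H^*(X,\Omega_U\otimes\mathbb Q)$, so by the uniqueness property of the Chern-Dold character (the paragraph containing (\ref{CDpt})--(\ref{CDX}), which says $ch_U$ is determined by its effect on the first Chern class $u\in U^2(\mathbb CP^\infty)$) it suffices to check that $(\mu^\star\circ q^\star)(u)=\beta(z)$. Here $\mu^\star\circ q^\star$ is multiplicative by the first part together with the fact that $\mu^\star=\mu\otimes\sigma$ is an algebra map (both $\mu$, being the cycle realisation homomorphism, and $\sigma$, by Proposition 4.1, are ring homomorphisms), and it is natural because $q^\star$, $\mu$ and $\sigma$ are all natural. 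So I would compute
\[
q^\star(u)=u\otimes 1+\sum_\lambda S_\lambda(u)\otimes S^\lambda
=u\otimes 1+\sum_{k\ge1}S_{(k)}(u)\otimes S^{(k)}
=u\otimes 1+\sum_{k\ge1}u^{k+1}\otimes S^{(k)},
\]
using that $u$ is a geometric cobordism class so $S_{(k)}(u)=u^{k+1}$ and $S_\lambda(u)=0$ for all non-one-part $\lambda$ (the rule recalled just before (\ref{multS})). Applying $\mu^\star=\mu\otimes\sigma$ and using $\mu(u)=z$, $\mu(u^{k+1})=z^{k+1}$ together with the explicit formula (\ref{Sstar}), $\sigma(S^{(k)})=[\Theta^k]/(k+1)!$, yields
\[
(\mu^\star\circ q^\star)(u)=z+\sum_{k\ge1}[\Theta^k]\frac{z^{k+1}}{(k+1)!}=\beta(z),
\]
which is exactly $ch_U(u)$ by Theorem 3.4 (formula (\ref{CDN})). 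Hence the two transformations agree on $u$, and by uniqueness $ch_U=\mu^\star\circ q^\star$ on all finite $CW$-complexes, and then on all $CW$-complexes by passage to limits.

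The step I expect to need the most care is the verification that $\mu^\star\circ q^\star$ genuinely lands in $H^*(X,\Omega_U\otimes\mathbb Q)$ with the right module structure and is natural as a transformation of $\Omega_U$-modules — in other words, that the target identification $H^*(X,\mathbb Q)\otimes\Omega_U\cong H^*(X,\Omega_U\otimes\mathbb Q)$ used in (\ref{mu*}) is compatible with everything, and that grading conventions (degree of $S^\lambda$ versus degree of $[\Theta^\lambda]\in\Omega_U^{-2|\lambda|}$) make the sum in $q^\star$ a well-defined element of the completed tensor product when $X$ is infinite-dimensional. This is where one must be slightly delicate: on a finite complex only finitely many $S_\lambda(x)$ are nonzero for each $x$, so there is no convergence issue, and the general case follows by naturality and $U^*(X)=\varprojlim U^*(X^{(n)})$. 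Once this bookkeeping is in place, the uniqueness characterisation of $ch_U$ does all the remaining work, and the theorem also immediately specialises, for $X=pt$, to the decomposition formula (\ref{decom}) as stated after the theorem.
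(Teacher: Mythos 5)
Your proposal is correct and takes essentially the same route as the paper's proof: multiplicativity of $q^\star$ comes from the Cartan formula (\ref{multS}) together with the duality between the coproduct (\ref{diag}) on $S$ and the product on $S^*$, and the main identity is then verified by computing $q^\star(u)$ for the geometric class $u=c_1^U(\eta)$ and applying $\mu\otimes\sigma$ via Theorem 4.2 to recover $\beta(z)=ch_U(u)$. You simply spell out more of the bookkeeping (the dual-pairing computation, and why the uniqueness characterisation of $ch_U$ on $u$ suffices) than the paper's terse version; the substance is identical.
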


\begin{proof}
The algebra homomorphism property $q^\star(xy)=q^\star (x) q^\star (y)$ follows from the multiplicative property  (\ref{multS}) of the Landweber-Novikov operations.

To prove the rest it is enough to check (\ref{chmu}) only for $u=c_1^U(\eta) \in U^2(\mathbb CP^\infty)$. In that case by definition
$$q^\star(u)=u\otimes 1+\sum_{\lambda}S_\lambda(u)\otimes S^\lambda=u\otimes 1+\sum_{n \in \mathbb N}S_{(n)}(u)\otimes S^{(n)},$$
since $u$ is geometric. But $S_{(n)}(u)=u^{n+1}$, so
$q^\star(u)=u\otimes 1+\sum_{n \in \mathbb N}u^{n+1} \otimes S^{(n)}.$
Now we use that $\mu(u)=z=c_1(\eta)$ and Theorem 4.2 saying that $\sigma(S^{(n)})=\frac{[\Theta^n]}{(n+1)!}$
to conclude that
$$
\mu^\star \circ q^\star(u)=z+\sum_{n \in \mathbb N}z^{n+1} \frac{[\Theta^n]}{(n+1)!}=ch_U(u)
$$
due to Theorem 3.6.
\end{proof}

\begin{Corollary}
The Chern-Dold character in complex cobordisms can be written as
\beq{CDmu}
ch_U(x)=\mu(x)+\sum_{\lambda}\mu(S_\lambda x) \frac{[\Theta^\lambda]}{(\lambda+1)!}, \quad x\in U^*(X)
\eeq
where $\mu: U^*(X) \to H^*(X,\mathbb Z)$ is the cycle realisation homomorphism.

For $X=pt$ and $x=[M^{2n}] \in \Omega_U$ we have 
\beq{6}
ch_U([M^{2n}])=\sum_{\lambda: |\lambda|=n}c^{\nu}_\lambda(M^{2n})\frac{[\Theta^\lambda]}{(\lambda+1)!}=[M^{2n}]
\eeq
in agreement with (\ref{CDpt}) and (\ref{decom}). 
\end{Corollary}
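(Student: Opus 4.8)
The plan is to unwind the composition $\mu^\star\circ q^\star$ using \mref{chmu} from the preceding theorem, together with the explicit descriptions of $q^\star$ in \mref{qCDf}, of $\mu^\star=\mu\otimes\sigma$ in \mref{mu*}, and of $\sigma(S^\lambda)$ in \mref{Sstar}. First I would write, for $x\in U^*(X)$,
\[
ch_U(x)=\mu^\star\bigl(q^\star(x)\bigr)=\mu^\star\Bigl(x\otimes 1+\sum_\lambda S_\lambda(x)\otimes S^\lambda\Bigr)=\mu(x)\,\sigma(1)+\sum_\lambda \mu(S_\lambda x)\,\sigma(S^\lambda),
\]
where the sum runs over non-empty partitions. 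Since $\sigma$ is an algebra isomorphism (Proposition 4.1) it sends $1$ to $1$, and substituting $\sigma(S^\lambda)=[\Theta^\lambda]/(\lambda+1)!$ from \mref{Sstar} gives exactly \mref{CDmu}. This part is pure bookkeeping once the preceding theorem is available.

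For the point case I would specialise to $X=pt$, where $U^*(pt)=\Omega_U$ and $\mu$ is the augmentation homomorphism $\Omega_U\to\mathbb Z$ which is $1$ on the identity and $0$ on every element of nonzero grading. Taking $x=[M^{2n}]$ with $n\geq 1$: the leading term $\mu([M^{2n}])$ vanishes since $[M^{2n}]\in\Omega_U^{-2n}$, and for each partition $\lambda$ the class $S_\lambda([M^{2n}])$ lies in $\Omega_U^{-2(n-|\lambda|)}$, so $\mu(S_\lambda([M^{2n}]))=0$ unless $|\lambda|=n$. When $|\lambda|=n$, formula \mref{slmn} identifies $S_\lambda([M^{2n}])$ with the characteristic number $c^\nu_\lambda(M^{2n})\in\mathbb Z$, on which $\mu$ acts as the identity, so only the terms with $|\lambda|=n$ survive and we are left with $\sum_{|\lambda|=n}c^\nu_\lambda(M^{2n})\,[\Theta^\lambda]/(\lambda+1)!$. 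The remaining equality of this expression with $[M^{2n}]$ then follows either from \mref{CDpt}, which identifies $ch_U$ on a point with the canonical embedding $\Omega_U\hookrightarrow\Omega_U\otimes\mathbb Q$, or directly from \mref{decom} in Corollary 3.3.

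I do not expect any genuine obstacle: the corollary is a formal consequence of the preceding theorem. The only points needing a little care are the grading conventions — so that the sum over $\lambda$ in \mref{CDmu} is effectively finite when $x$ has a fixed degree, and collapses to $|\lambda|=n$ after applying $\mu$ in the point case — and the unitality of $\sigma$, coming from Proposition 4.1, which ensures that the degree-zero term $\mu(x)$ is reproduced correctly.
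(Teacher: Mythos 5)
Your proof is correct and is exactly the unwinding the paper intends; the paper states the corollary without a written proof, as an immediate consequence of Theorem 4.4, and you have supplied the routine verification. The two points you flag as needing care — that $\sigma$ is unital, and that the degree computation collapses the sum to $|\lambda|=n$ for $X=pt$ (with the terms $|\lambda|<n$ killed by the augmentation and the terms $|\lambda|>n$ vanishing outright since $\Omega_U$ lives in non-positive degree) — are precisely the ones worth making explicit, and your use of \mref{slmn} to identify $S_\lambda([M^{2n}])$ with $c^\nu_\lambda(M^{2n})$ when $|\lambda|=n$ matches the expansion the authors record in \mref{qCDM2n}.
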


Note that the composition of the quantisation and dequantisation maps for $X=pt$ is non surprisingly the identical map, but leads to a non-trivial formula (\ref{6}).

According to general construction of Dold \cite{Dold} every extraordinary cohomology theory $h^*$ has its analogue of Chern character, which is the transformation of cohomology theories
$$
ch_h: h^*(X) \to H^*(X, \Omega_h\otimes \mathbb Q), \quad \Omega_h=h^*(pt)
$$
with characteristic property that for $X=pt$ it is the canonical homomorphism $\Omega_h \to \Omega_h\otimes \mathbb Q.$

In our case of quantum complex cobordism theory $QU^*$ the corresponding {\it quantum Chern-Dold character} $ch_U^\star: QU^*(X)=U^*\otimes S^*\to H^*(X, Q\Omega^*)$ has the following explicit form
\beq{qcd}
ch_U^\star(x\otimes s)=ch_U(x)\otimes s + \sum_{\lambda}S_\lambda(ch_U(x))\otimes S^\lambda s, \quad x \in U^*(X), \, s \in S^*.
\eeq

%\begin{Remark}
%A different quantisation of complex cobordism theory and corresponding Chern-Dold character was introduced by Coates and Givental \cite{CG} in relation with the Gromov-Witten theory. Its relation with a specialised and localised version of our quantisation we plan to discuss separately.
%\end{Remark} 

Let us deduce now our formula (\ref{res}). 

\begin{Theorem}
The cobordism class of $\Theta_{k}^{n-k}$ can be given as the residue integral at zero
$$
[\Theta_{k}^{n-k}]=\frac{(n+1)!}{2\pi i} \oint \beta(z)^{k+1} \frac{dz}{z^{n+2}},
$$
where $\beta(z)$ is given by (\ref{CDnew}). 
\end{Theorem}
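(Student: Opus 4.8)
The plan is to derive the residue formula directly from Theorem~4.3 together with the key identity~(\ref{bet}) for the Landweber--Novikov operations applied to the series $\beta(z)$.

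First I would record that, by Theorem~3.5, $\beta(z)=ch_U(u)=\sum_{n\geq 0}[\Theta^n]\dfrac{z^{n+1}}{(n+1)!}$ with the convention $[\Theta^0]=1\in\Omega_U$, so that the degree-zero term reproduces the linear term $z$ in~(\ref{CDnew}). Next I would apply the operation $S_{(k)}$ to this generating series term by term. Since $S_{(k)}$ is $\mathbb Z$-linear and lowers the cobordism grading by $2k$ while the variable $z=c_1(\eta)$ carries only a cohomological degree, $S_{(k)}$ commutes with multiplication by powers of $z$, giving
\beq{aux}
S_{(k)}(\beta(z))=\sum_{n\geq 0}S_{(k)}([\Theta^n])\,\frac{z^{n+1}}{(n+1)!}.
\eeq
By Theorem~4.3 each coefficient on the right is $S_{(k)}([\Theta^n])=[\Theta_k^{n-k}]$ for $n\geq k$, and it vanishes for $n<k$ for degree reasons; for $n=k$ it equals $c^\nu_{(k)}(\Theta^k)=(k+1)!$, which is consistent with $\Theta_k^0$ being a set of $(k+1)!$ points.

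Then I would invoke property~(\ref{bet}), namely $S_{(k)}(\beta(z))=\beta(z)^{k+1}$, and combine it with~(\ref{aux}) to obtain
$$
\beta(z)^{k+1}=\sum_{n\geq k}[\Theta_k^{n-k}]\,\frac{z^{n+1}}{(n+1)!}.
$$
Reading off the coefficient of $z^{n+1}$ yields $[\Theta_k^{n-k}]=(n+1)!\cdot[z^{n+1}]\,\beta(z)^{k+1}$, and rewriting this Taylor coefficient as a contour integral about the origin, $[z^{n+1}]\,\beta(z)^{k+1}=\dfrac{1}{2\pi i}\oint \beta(z)^{k+1}\,\dfrac{dz}{z^{n+2}}$, gives the asserted formula.

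The only point requiring care is the justification of~(\ref{bet}) and of the termwise action in~(\ref{aux}); both rest on the standard description of $S_\lambda$ on the geometric class $u=c_1^U(\eta)$ (with $S_{(k)}(u)=u^{k+1}$ and $S_\lambda(u)=0$ for $\lambda$ not one-part) together with the multiplicative rule~(\ref{multS}), which is recalled at the beginning of Section~4. Granting these, the remaining content is the purely combinatorial repackaging of the coefficient as a residue, so this step is routine.
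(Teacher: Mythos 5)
Your proof is correct and follows essentially the same route as the paper's: both apply $S_{(k)}$ to the Chern--Dold character series, use the commutation of the Landweber--Novikov operations with $ch_U$ (equivalently, property~(\ref{bet}) coming from $S_{(k)}(u)=u^{k+1}$ and multiplicativity) to produce $\beta(z)^{k+1}$ on one side, use Theorem~4.3 to identify the coefficients $S_{(k)}([\Theta^n])=[\Theta_k^{n-k}]$ on the other side, and then compare coefficients of $z^{n+1}$, which is precisely the stated residue. Your extra remarks on the $n<k$ and $n=k$ cases and the degree bookkeeping are consistent with the paper and do not change the argument.
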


\begin{proof}
%Novikov proved that the action of the Landweber-Novikov algebra on $\Omega_U=U^*(pt)$ is faithful \cite{Nov-67}. 
We use the fact that the Chern-Dold character $ch_U: U^*(X)\to H^*(X, \Omega_U \otimes \mathbb Q)$ commutes with the action of the Landweber-Novikov algebra, which is trivial on the cohomology $H^*(X, \mathbb Q)$ (cf. \cite{B-1970}).

Applying Landweber-Novikov operation $S_\lambda$ to both sides of the relation (\ref{CDN}) we have
$S_\lambda ch_U(u)=ch_U(S_\lambda u)=\beta(z)^{k+1}$ if $\lambda= (k)$ for some $k\in \mathbb  N$ and 0 otherwise.
On the right hand side for $\lambda= (k)$ we have $$\sum_{n=k}^\infty S_{(k)}[\Theta^n]\frac{z^{n+1}}{(n+1)!}=\sum_{n=k}^\infty [\Theta_{k}^{n-k}]\frac{z^{n+1}}{(n+1)!},$$
and zero for any $\lambda\neq (k)$.
Comparison proves the claim and formula (\ref{res}).
\end{proof}

\begin{Corollary}
The cobordism class $[\Theta_{k}^{n-k}]$ is a polynomial of $[\Theta^1], \dots, [\Theta^{n-k}]$ with positive integer coefficients.

The polynomial subring $\Theta_U\subset \Omega_U$ generated by the theta divisors:
$$
\Theta_U=\mathbb Z[t_1,\dots, t_n,\dots], \, t_k=[\Theta^k], \, k \in \mathbb N,
$$ 
is invariant under the Landweber-Novikov operations.
\end{Corollary}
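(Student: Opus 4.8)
The plan is to derive both assertions from the residue formula of Theorem~4.6. First I would establish the positivity and integrality of the coefficients in the expansion of $[\Theta_k^{n-k}]$ in the classes $[\Theta^1],\dots,[\Theta^{n-k}]$. Starting from
\[
[\Theta_{k}^{n-k}]=\frac{(n+1)!}{2\pi i} \oint \beta(z)^{k+1} \frac{dz}{z^{n+2}},
\]
and writing $\beta(z)=z\bigl(1+\sum_{m\geq 1}[\Theta^m]\frac{z^{m}}{(m+1)!}\bigr)$, I would expand $\beta(z)^{k+1}=z^{k+1}\bigl(1+\sum_{m\geq 1}[\Theta^m]\frac{z^{m}}{(m+1)!}\bigr)^{k+1}$ by the multinomial theorem, so that
\[
[\Theta_k^{n-k}]=(n+1)!\sum \binom{k+1}{a_0,a_1,\dots}\prod_{m\geq 1}\Bigl(\frac{[\Theta^m]}{(m+1)!}\Bigr)^{a_m},
\]
the sum over $(a_1,a_2,\dots)$ with $a_0+\sum a_m=k+1$ and $\sum m\,a_m=n-k$. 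Each term is manifestly a non-negative rational multiple of a monomial $[\Theta^\lambda]$ with $|\lambda|=n-k\leq n-k$; the content is that the overall coefficient is a \emph{positive integer}. For this I would invoke the Hurwitz-series argument already used in the paper (the remark after (\ref{ex3})): the series $\beta(z)/z$ and its powers are exponential generating functions of integer sequences divided by appropriate factorials, and the operation producing $[\Theta_k^{n-k}]$ from them is a composition of Hurwitz-integral operations (products and the shift $z\mapsto z^{k+1}$ together with multiplication by $(n+1)!$), so the result lies in $\mathbb{Z}[[\Theta^1],\dots,[\Theta^{n-k}]]$. Positivity is immediate since every multinomial coefficient and every $(m+1)!^{-1}(n+1)!$-type factor appearing, once the integrality is known, combines to a positive integer.

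The second assertion then follows formally. The subring $\Theta_U=\mathbb{Z}[t_1,t_2,\dots]$ with $t_k=[\Theta^k]$ is closed under the Landweber--Novikov operations because, by Theorem~4.5, $S_{(k)}(t_n)=[\Theta_k^{n-k}]$ for $k\le n$ (and $S_{(k)}(t_n)=0$ for $k>n$), while $S_\lambda(t_n)=0$ for every non-one-part partition $\lambda$; by the first part each $[\Theta_k^{n-k}]$ lies in $\Theta_U$. For a general element of $\Theta_U$, which is a polynomial in the $t_n$, one extends by the Cartan-type multiplicative formula (\ref{multS}): $S_\lambda$ applied to a product expands as a sum of products of $S_{\lambda_i}$ applied to the factors $t_{n_i}$, each of which lands in $\Theta_U$, and $\Theta_U$ is a ring, so the whole expression lies in $\Theta_U$. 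Hence $\Theta_U$ is invariant under the entire Landweber--Novikov algebra.

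The main obstacle is the integrality (as opposed to mere rationality) of the coefficients in $[\Theta_k^{n-k}]$ as a polynomial in the $[\Theta^m]$. The naive multinomial expansion only gives coefficients in $\mathbb{Z}[1/2,1/3,\dots]$, and one must argue that the factor $(n+1)!$ clears all denominators uniformly. I expect the cleanest route is the one indicated above via Hurwitz series: recognising $\beta(z)=\sum_{n\ge 1}[\Theta^{n-1}]z^n/n!$ (from (\ref{wns1})) as a Hurwitz-integral series in the sense of \cite{Hur,PS}, and noting that the ring of Hurwitz series is closed under multiplication and that the coefficient extraction $\frac{(n+1)!}{2\pi i}\oint(\,\cdot\,)z^{-n-2}dz$ is exactly the passage from a Hurwitz series to its integer coefficient sequence. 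An alternative, more geometric argument avoiding this: $[\Theta_k^{n-k}]$ is the cobordism class of an actual smooth projective variety (the complete intersection (\ref{equa})), hence a genuine element of $\Omega_U\subset\Omega_U\otimes\mathbb{Q}$; combined with the fact proved above that it lies in the $\mathbb{Q}$-span of monomials in the $[\Theta^m]$ and that these monomials are part of a $\mathbb{Q}$-basis of $\Omega_U\otimes\mathbb{Q}$, one concludes the coefficients are rational; upgrading to integrality would still require the Hurwitz input or an appeal to the fact that the $[\Theta^m]$, suitably normalised, can be completed to a \emph{polynomial} basis of $\Omega_U$ over $\mathbb{Z}$ after inverting nothing — which is subtle since $[\Theta^m]$ itself is not a polynomial generator of $\Omega_U$ in general. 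For this reason I would present the Hurwitz-series argument as the primary one.
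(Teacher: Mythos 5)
Your proposal is correct and follows the same route as the paper: the integrality and positivity come from recognising $\beta(z)=\sum_{m\ge 1}[\Theta^{m-1}]\,z^m/m!$ as a Hurwitz series and using closure of Hurwitz series under multiplication, while invariance of $\Theta_U$ follows from Theorem~4.5 together with the Cartan-type formula~(\ref{multS}). One small slip to fix: the relevant Hurwitz series is $\beta(z)$ itself, not $\beta(z)/z$ (whose $z^m/m!$-coefficient is $[\Theta^m]/(m+1)$, not an integer multiple of $[\Theta^m]$); your later formulation $\beta(z)=\sum_{n\ge 1}[\Theta^{n-1}]z^n/n!$ is the correct one, and then $\beta(z)^{k+1}$ is again a Hurwitz series whose $z^{n+1}/(n+1)!$-coefficient is exactly $[\Theta_k^{n-k}]$.
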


Indeed, since $\beta(z)$ is an exponential generating function, this follows from the properties of Hurwitz series \cite{Hur}, see also problems 174-177 in \cite{PS}, Vol. 2, Ch. 8.
In particular,
$$
[\Theta_1^{n-1}]=\sum_{k=0}^{n-1} {n+1 \choose k+1} [\Theta^k][\Theta^{n-k-1}], \quad [\Theta_{n-1}^1]=\frac{n(n+1)}{2}n! \,[\Theta^1].
$$

The following important interpretation of the Landweber-Novikov algebra was found by Buchstaber and Shokurov \cite{BSh}.

Consider the group $Diff_1$ of the formal diffeomorphisms of the line given by $$f(x)=x+\sum_{k\in \mathbb N} \alpha_k x^{k+1}, \quad x, \alpha_k \in \mathbb R.$$
Its Lie algebra $\mathfrak{diff}_1$ is the Lie algebra of the corresponding formal vector fields.

\begin{Theorem} (Buchstaber and Shokurov \cite{BSh})
The real version $S\otimes \mathbb R$ of the Landweber-Novikov algebra is isomorphic to the universal enveloping algebra of the Lie algebra $\mathfrak{diff}_1$, which can be identified with the algebra of the left-invariant differential operators on the group $Diff_1$.
\end{Theorem}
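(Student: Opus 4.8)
The plan is to exhibit the isomorphism explicitly by identifying three natural algebras: the Landweber-Novikov algebra $S\otimes\mathbb R$, the universal enveloping algebra $U(\mathfrak{diff}_1)$, and the algebra of left-invariant differential operators on $Diff_1$. The equivalence of the latter two is classical (the Milnor-Moore / PBW picture: for any Lie group the left-invariant differential operators form an algebra canonically isomorphic to $U(\mathfrak g)$, with the filtration by order matching the PBW filtration), so the substance lies in matching $S\otimes\mathbb R$ with $U(\mathfrak{diff}_1)$.

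First I would set up coordinates on $Diff_1$: a formal diffeomorphism $f(x)=x+\sum_{k\ge 1}\alpha_k x^{k+1}$ is determined by the coordinate functions $\alpha_k$, so the ring of functions is the polynomial ring $\mathbb R[\alpha_1,\alpha_2,\dots]$, graded by $\deg\alpha_k=k$. Dually, $S^*\otimes\mathbb R\cong\Omega_U\otimes\mathbb R$ (Proposition 4.1) is also such a polynomial ring, and by Theorem 4.2 the generators may be taken to be (rescaled) theta-divisor classes $S^{(k)}\mapsto[\Theta^k]/(k+1)!$; these are dual to the one-part Landweber-Novikov operations $S_{(k)}$. The key point is that composition of formal diffeomorphisms, $f\circ g$, induces on $\mathbb R[\alpha_1,\alpha_2,\dots]$ a coproduct which is precisely (the dual of) the Hopf-algebra diagonal $\triangle S_\lambda=\sum S_{\lambda_1}\otimes S_{\lambda_2}$ together with the composition formula $S_{(k)}(\beta(z))=\beta(z)^{k+1}$ from \eqref{bet}: indeed, the exponential of the formal group plays the role of the "universal" diffeomorphism whose coefficients are the $[\Theta^n]/(n+1)!$, and the identity $F(\beta(z),\beta(w))=\beta(z+w)$ together with $S_\lambda$ acting by substitution is exactly the statement that $Diff_1$ acts on these coefficients by composition. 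So $S^*\otimes\mathbb R$ is the coordinate Hopf algebra of the proalgebraic group $Diff_1$, whence by duality $S\otimes\mathbb R$ is the algebra of distributions supported at the identity, i.e. $U(\mathfrak{diff}_1)$.

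Concretely, I would argue: the primitive elements of the Hopf algebra $S\otimes\mathbb R$ — those $s$ with $\triangle s=s\otimes 1+1\otimes s$ — are spanned by the $S_{(k)}$, $k\ge 1$ (this follows from \eqref{diag} since a one-part partition cannot be split nontrivially, while multi-part $S_\lambda$ are products and hence not primitive), and these primitives span a Lie subalgebra under the commutator. Matching $S_{(k)}$ with the formal vector field $x^{k+1}\partial_x$ (the generator of the one-parameter subgroup $f_t(x)=x+tx^{k+1}+\dots$), I would check that their brackets agree: $[S_{(j)},S_{(k)}]=(k-j)S_{(j+k)}$ on one side, matching $[x^{j+1}\partial_x,x^{k+1}\partial_x]=(k-j)x^{j+k+1}\partial_x$ on the other. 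This identifies the Lie algebra of primitives with $\mathfrak{diff}_1$. Then, since $S\otimes\mathbb R$ is a connected graded cocommutative Hopf algebra over a field of characteristic zero, the Milnor-Moore theorem gives $S\otimes\mathbb R\cong U(\mathfrak{diff}_1)$, generated by its primitives; the grading matches the order filtration of differential operators, completing the identification with left-invariant differential operators on $Diff_1$.

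The main obstacle is verifying that the Hopf-algebra structure on $S\otimes\mathbb R$ dualizes exactly to composition in $Diff_1$ rather than to some twisted version of it — equivalently, pinning down the precise dictionary between the Landweber-Novikov diagonal \eqref{diag}, the substitution rule \eqref{bet}, and the combinatorics of composing the series $\beta(z)$. Once this is nailed down (and it is essentially forced by the universal property of the formal group and by Theorem 4.2), the rest is the standard structure theory of graded Hopf algebras. I would also remark that this is the content of the original argument of Buchstaber and Shokurov \cite{BSh}, to which the reader is referred for the details of the duality computation.
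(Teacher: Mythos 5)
The paper does not actually prove this theorem: it is stated with an attribution to Buchstaber--Shokurov \cite{BSh} and the reader is referred there for the argument; the paper immediately follows the statement with the explicit vector fields \eqref{vects} as an illustration rather than a proof. So there is no ``paper's own proof'' to compare your attempt against, and your acknowledgment at the end --- that you are reconstructing the [BSh] argument --- is apt.

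That said, your sketch does capture the standard skeleton one would use: $S\otimes\mathbb{R}$ is a connected graded cocommutative Hopf algebra over a field of characteristic zero (cocommutativity is exactly the symmetry of the diagonal \eqref{diag} noted in the paper), so by Milnor--Moore it is the universal enveloping algebra of its Lie algebra of primitives; the primitives are one-dimensional in each degree because the dual $S^*\otimes\mathbb{R}$ is a polynomial Hopf algebra with one indecomposable per degree, hence spanned by the $S_{(k)}$; and the identification of this Lie algebra with $\mathfrak{diff}_1$ follows from the bracket computation. One point to be careful about: your asserted bracket $[S_{(j)},S_{(k)}]=(k-j)S_{(j+k)}$ has the opposite sign from what the paper's formulas \eqref{vects} give. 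Taking the displayed $S_{(1)}$ and $S_{(2)}$ and computing the commutator of these vector fields in the coordinates $\alpha_k$ yields $[S_{(1)},S_{(2)}]=-S_{(3)}$, i.e. $(j-k)S_{(j+k)}$. This is harmless for the isomorphism (the map $e_k\mapsto -e_k$ intertwines the two sign conventions, and left- versus right-invariant conventions account for exactly such a flip), but if your proof is to match the paper's coordinate formulas you should adopt the $(j-k)$ sign. The other genuine content you correctly flag --- that the coproduct on $S$ dualizes precisely to composition of formal diffeomorphisms, via the fact that $\beta(z)$ is the universal series and $S_{(k)}(\beta(z))=\beta(z)^{k+1}$ as in \eqref{bet} --- is indeed the crux of the [BSh] duality computation, and you are right that it is what needs to be pinned down; in your sketch it remains asserted rather than checked.
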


In particular, as an algebra $S\otimes \mathbb R$ is generated by two Landweber-Novikov operations $S_{(1)}$ and $S_{(2)}$, corresponding to two vector fields on the group $Diff_1$, which can be written in coordinates $\alpha_k, \, k \in \mathbb N$  as
\beq{vects}
S_{(1)}=\frac{\partial}{\partial \alpha_1}+\sum_{k=2}^\infty k\alpha_{k-1}\frac{\partial}{\partial \alpha_k}, \,\, S_{(2)}=\frac{\partial}{\partial \alpha_2}+\sum_{k=3}^\infty (k-1)\alpha_{k-2}\frac{\partial}{\partial \alpha_k}.
\eeq

The algebra $A^U$ of all cohomological operations in complex cobordisms was introduced in \cite{Nov-67} and is known as {\it Novikov algebra.} It is $\mathbb Z$-graded algebra, which is a free left $\Omega_U$-module with generators $S_\lambda$ and the commutation relations
\beq{comm}
S_\lambda \cdot x=\sum_{\lambda=(\lambda_1,\lambda_2)}S_{\lambda_1}(x) S_{\lambda_2}, \quad x \in \Omega_U
\eeq
(see Lemma 5.4 in Novikov \cite{Nov-67}). 

Let us introduce two new families of elements of Novikov algebra
\beq{bases}
T_{\lambda, \mu}=[\Theta^\lambda] S_\mu, \quad V_{\lambda,\mu}=[V^\lambda] S_\mu, \,\, \lambda,\mu \in \mathcal P,
\eeq
where for partition $\lambda=(\lambda_1,\dots, \lambda_k)$ we define $V^\lambda:=V^{2\lambda_1}\times\dots\times V^{2\lambda_k}$ with $V^{2n}$ being $U$-manifolds (\ref{vn}) from the previous section. 

From our results it follows that they give additive bases in two proper subalgebras of Novikov algebra, generating this algebra over $\mathbb Q$. Our Theorem 1.2, Corollary 4.7 and Lemma 3.9 can be used to describe the multiplication in these bases, which could be useful to study the representations of Novikov algebra.

The important elements of Novikov algebra, motivated by the Conner and Floyd results \cite{CF}, describing the $K$-theory in terms complex cobordisms, are the {\it Adams-Novikov operations} $\Psi^k_U$ (see \cite{Nov-67}). They are defined uniquely as the multiplicative operations in complex cobordisms, which satisfy the relation
\beq{psik}
\Psi^k_U(u)=\frac{1}{k} \beta(k \beta^{-1}(u)), \quad u=c_1^U(\eta) \in U^2(\mathbb CP^\infty).
\eeq
For any cobordism class $[M^{2n}]$ of $U$-manifold $M^{2n}$ we have
\beq{psik2}
\Psi^k_U([M^{2n}])=k^{n}[M^{2n}].
\eeq

Let $\Theta^n(k)$ be a smooth zero locus of generic section of the $k$-th tensor power $L^{\otimes k}$ of the line bundle $L,$ defining the principal polarisation of an abelian variety $A^{n+1}$.
Note that for $k\geq 2$ the zero locus of generic section of $L^{\otimes k}$ is smooth for any abelian variety by Bertini theorem. We would like to mention that the subvarieties $\Theta^n(k) \subset A^{n+1}$ have appeared in \cite{Mir} as the spectral varieties of certain commutative algebras of differential operators.

The cohomology and the corresponding cobordism class $[\Theta^n(k)]$ do not depend on the choice of such section.
As for the theta divisor, by Lefschetz hyperplane theorem the corresponding Betti numbers are
$$
b_j(\Theta^n(k))=b_j(A^{n+1})={2n+2 \choose j}=b_{2n-j}(\Theta^n(k)), \,\, j<n,
$$
except the middle one $b_n$, which can be found using the formula for the Euler characteristic $\chi(\Theta^n(k))=(-1)^nk^{n+1}(n+1)!:$
\beq{catalank}
b_{n}(\Theta^n(k))=k^{n+1}(n+1)!+\frac{n}{n+2} {2n+2 \choose n+1}.
\eeq
The signature of $\tau(\Theta^n(k)=k^{n+1}\tau(\Theta^n)$, so for even $n$ we have
\beq{sign2}
\tau(\Theta^n(k))=\frac{2^{n+2}(2^{n+2}-1)k^{n+1}}{n+2}B_{n+2},
\eeq
where $B_n$ are the Bernoulli numbers.

\begin{Theorem} The cobordism class $[\Theta^n(k)]$ can be expressed as
\beq{cobt}
[\Theta^n(k)]=k^{n+1}[\Theta^{n}]=k\Psi^k_U([\Theta^{n}]).
\eeq
\end{Theorem}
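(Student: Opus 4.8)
The plan is to reduce everything to a computation of the normal Chern numbers of $\Theta^n(k)$ and then invoke the decomposition formula (\ref{decom}). First I would observe that, exactly as for the theta divisor itself, the normal bundle of $\Theta^n(k)\subset A^{n+1}$ is stably equivalent to the restriction $\mathcal L^k=i^*(L^k)$ of a line bundle, since the tangent bundle of the abelian variety is trivial; here $i\colon\Theta^n(k)\to A^{n+1}$ is the embedding. Because $\nu\Theta^n(k)$ has rank one, the normal Chern numbers $c^\nu_\lambda(\Theta^n(k))$ vanish for every partition $\lambda$ of $n$ different from the one-part partition $(n)$, and the only surviving number is $c^\nu_{(n)}(\Theta^n(k))=\bigl(c_1(\mathcal L^k)^n,\langle\Theta^n(k)\rangle\bigr)$.

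The next step is the evaluation of this single number. Writing $\mathcal D=c_1(\mathcal L)=i^*D$ and using that $\Theta^n(k)$ realises the homology class Poincar\'e dual to $c_1(L^k)=kD\in H^2(A^{n+1},\mathbb Z)$, together with the classical identity $D^{n+1}=(n+1)!$ in $H^{2n+2}(A^{n+1},\mathbb Z)=\mathbb Z$ used already in Lemma 3.2, I get
$$
c^\nu_{(n)}(\Theta^n(k))=\bigl((k\mathcal D)^n,\langle\Theta^n(k)\rangle\bigr)=k^n\bigl(D^n\cup kD,\langle A^{n+1}\rangle\bigr)=k^{n+1}(n+1)!\,.
$$
Feeding this into formula (\ref{decom}) (Corollary 3.3), in which all terms but $\lambda=(n)$ drop out, yields
$$
[\Theta^n(k)]=c^\nu_{(n)}(\Theta^n(k))\,\frac{[\Theta^n]}{(n+1)!}=k^{n+1}[\Theta^n]\,,
$$
which is the first equality in (\ref{cobt}).

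Finally, the second equality is immediate from the behaviour of the Adams-Novikov operations: by (\ref{psik2}) applied to the $2n$-dimensional $U$-manifold $\Theta^n$ we have $\Psi^k_U([\Theta^n])=k^{n}[\Theta^n]$, hence $k\,\Psi^k_U([\Theta^n])=k^{n+1}[\Theta^n]=[\Theta^n(k)]$. The only point requiring genuine care, rather than bookkeeping, is the self-intersection computation in the second paragraph, i.e.\ correctly identifying the Poincar\'e dual class of $\Theta^n(k)$ as $kD$ and pushing the power $\mathcal D^n$ forward to $A^{n+1}$; everything else follows formally from results already established above. One could alternatively derive the same conclusion from the generating-function identity (\ref{hirzeb}) by noting that replacing $L$ with $L^k$ multiplies the relevant section of $L$ effect by $k$ and rescales $z\mapsto kz$ appropriately, but the Chern-number argument is the most direct.
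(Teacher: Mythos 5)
Your proposal is correct and follows essentially the same route as the paper: identify the stable normal bundle of $\Theta^n(k)$ with $\mathcal L^k=i^*(L^k)$, reduce via formula (\ref{decom}) to the single Chern number $c^\nu_{(n)}(\Theta^n(k))$, evaluate it as $k^{n+1}(n+1)!$ by pushing forward to $A^{n+1}$ and using $D^{n+1}=(n+1)!$, and then obtain the second equality from (\ref{psik2}). The only difference is that you spell out explicitly the vanishing of the non-one-part normal Chern numbers, which the paper's proof leaves implicit.
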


\begin{proof}
The normal bundle to $\Theta^n(k)$ can be identified with $\mathcal L^{\otimes k}=i^*L^{\otimes k}$ induced from $L^{\otimes k}$ by the embedding $i:\Theta^n(k) \to A^{n+1}.$
According to formula (\ref{decom}) we have
$
[\Theta^n(k)] = c_{(n)}^ \nu(\Theta^n(k)) \frac{[\Theta^n]}{(n+1)!},   
$
so we need only to compute $c_{(n)}^ \nu(\Theta^n(k)).$

The characteristic class  $c_{(n)}(\nu \Theta^n(k))=c_1^n(\mathcal L^{\otimes k}) =i^*c_1^n(L^{\otimes k})=k^n i^*c_1^n(L),$
so we have $c_{(n)}^ \nu(\Theta^n(k))= (c_{(n)}(\nu \Theta^n(k)), \langle\Theta^n(k)\rangle) =
k^n ( i^*c_1^n(L), \langle\Theta^n(k)\rangle)\\
= k^n( c_1^n(L), i_*\langle\Theta^n(k)\rangle) =
k^{n+1}( c_1^{n+1}(L), \langle A^{n+1}\rangle) = k^{n+1} (n+1)!,$ which together with (\ref{psik2}) proves the claim.
\end{proof}
% [\Theta^n(k)] = k^{n+1} [[\Theta^n]/].

%Since the normal bundle to $\Theta^n(k)$ can be identified with $\mathcal L^k=i^*L^k$ induced from $L^k$ by embedding $i: \Theta^n(k) \to A^{n+1}$, the geometric definition of the cohomological operations leads to the following result.
%
%\begin{Theorem} The action of the Adams-Novikov operations on the cobordism classes of the theta divisors is given by
%\beq{psik3}
%\Psi^k_U([\Theta^{n}])=\frac{1}{k}[\Theta^n(k)].
%\eeq
%\end{Theorem}
%
%Note that in the right hand side we have the cobordism class  $k^n[\Theta^n]$ of a reducible algebraic variety. An interesting question is whether there is an irreducible algebraic representative of this class.% (like in the case $k^{n+1}[\Theta^n]=[\Theta^n(k)]$).

%\begin{proof}
%Indeed, the normal bundle of $\Theta^n(k)$ 
%
%\end{proof}

%As an algebra $A^U\otimes \mathbb Q$ is generated by $\Omega_U^\mathbb Q$ and two Landweber-Novikov operations $S_{(1)}$ and $S_{(2)}$ (see

\section{Real-analytic elliptic representatives}

Consider now the most degenerate case of the abelian variety, when $A^{n+1}$ is the product of $n+1$ copies of an elliptic curve $\mathcal E=\mathbb C/\Gamma,$ where $\Gamma$ is the lattice with periods $2\omega_1, 2\omega_2$ (in the classical notations from \cite{WW}).

Let $L$ be the canonical line bundle on it with the holomorphic section 
$$
S_0(u)=\sigma(u_1)\dots \sigma(u_{n+1}), \quad u=(u_1,\dots,u_{n+1})\in \mathcal E^{n+1},
$$
where $\sigma$ is the classical Weierstrass sigma function \cite{WW}.
The corresponding theta divisor defined by $S_0(u)=0$ is the union of $n+1$ coordinate hypersurfaces given by $u_i=0.$

 It is singular, but we can show now that one can find a smooth real-analytic representative of the same homology class, which can be determined in terms of classical elliptic functions.

Consider the classical Weierstrass zeta function $\zeta(z)$
 with simple poles at the lattice points and the transformation properties
 $$
 \zeta(z+2\omega_1)=\zeta(z)+2\eta_1, \quad  \zeta(z+2\omega_2)=\zeta(z)+2\eta_2,
 $$
 where
 $
 \eta_i=\zeta(\omega_i), \,\, i=1,2,
 $
see  \cite{WW}.
 
Introduce the following non-holomorphic function (inspired by the theory of periodic vortices \cite{HV,Tkac})
\begin{equation}
\label{xi}
\xi(z)=\zeta(z)+az+b\bar z,
\end{equation}
where $a,b$ is the unique solution of the following linear system
\begin{equation}
\label{lin}
a\omega_1+b \overline{\omega}_1+\eta_1=0,  \quad a\omega_2+b{\overline\omega}_2+\eta_2=0,
\end{equation}
or, explicitly
$$
a=-\frac{\eta_1 \overline{\omega}_2-\eta_2\overline{\omega}_1}{\omega_1 \overline{\omega}_2-\omega_2 \overline{\omega}_1}=-\frac{\eta_1\overline{\omega}_2-\eta_2\overline{\omega}_1} {2 \Im\,(\omega_1\overline{\omega}_2)}, \quad 
b=\frac{\eta_1\omega_2-\eta_2\omega_1}{\omega_1 \overline{\omega}_2-\omega_2 \overline{\omega}_1}=\frac{\pi}{4 \Im\,(\omega_1\overline{\omega}_2)},
$$
where we have used the Legendre identity \cite{WW}
$$
\eta_1\omega_2-\eta_2\omega_1=\frac{\pi i}{2}.
$$
\begin{Lemma}
The function $\xi$ is an odd doubly-periodic, complex-valued, real-analytic, harmonic function with the asymptotic behaviour
$\xi(z)= 1/z+\mathcal O(z)$ at zero and with zeros at all 3 half periods $\omega_1, \omega_2, \omega_3=\omega_1+\omega_3.$

In the lemniscatic case with $\omega_1=\omega \in \mathbb R, \, \omega_2=i\omega$ we have
\begin{equation}
\label{lem}
\xi(z)=\zeta(z)-\frac{\pi}{4\omega^2}\bar z,
\end{equation} 
which has zeros precisely at the 3 half-periods.
\end{Lemma}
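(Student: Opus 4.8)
The plan is to obtain the first five assertions directly from standard facts about the Weierstrass functions together with the defining system \mref{lin}. Oddness is clear, since $\zeta$ and $z\mapsto az+b\bar z$ are both odd. On $\mathbb C\setminus\Gamma$ the summand $\zeta(z)+az$ is holomorphic and $b\bar z$ is antiholomorphic, so $\partial_z\partial_{\bar z}\xi\equiv 0$ and $\xi$ is harmonic and real-analytic there, its only singularities being the simple poles on $\Gamma$. The behaviour at $0$ comes from the Laurent expansion $\zeta(z)=1/z-\frac{g_2}{60}z^{3}-\cdots$, which has no linear term, so $\xi(z)=1/z+(az+b\bar z)+O(z^{3})=1/z+O(z)$. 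Finally, double periodicity is exactly what \mref{lin} is designed to give: using $\zeta(z+2\omega_j)=\zeta(z)+2\eta_j$ and $a(z+2\omega_j)+b(\bar z+2\overline{\omega}_j)=(az+b\bar z)+2(a\omega_j+b\overline{\omega}_j)=(az+b\bar z)-2\eta_j$, the two copies of $2\eta_j$ cancel, so $\xi(z+2\omega_j)=\xi(z)$ for $j=1,2$ and $\xi$ descends to $\mathbb C/\Gamma$.

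For the zeros at the half-periods I would use the classical device for odd elliptic functions: since $2\omega_j\in\Gamma$ while $\omega_j\notin\Gamma$, periodicity and oddness give $\xi(\omega_j)=\xi(\omega_j-2\omega_j)=\xi(-\omega_j)=-\xi(\omega_j)$, hence $\xi(\omega_j)=0$ for $j=1,2,3$, with $\omega_3=\omega_1+\omega_2$. For the lemniscatic case $\omega_1=\omega\in\mathbb R$, $\omega_2=i\omega$, the lattice is stable under multiplication by $i$, so $\wp(iz)=-\wp(z)$; integrating with the normalisation $\zeta\sim 1/z$ gives $\zeta(iz)=-i\zeta(z)$, whence $\eta_2=\zeta(i\omega)=-i\eta_1$. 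Substituting into the Legendre identity $\eta_1\omega_2-\eta_2\omega_1=\pi i/2$ gives $2i\omega\eta_1=\pi i/2$, i.e.\ $\eta_1=\pi/(4\omega)$ and $\eta_2=-i\pi/(4\omega)$; feeding these into \mref{lin} (whose equations become $(a+b)\omega=-\pi/(4\omega)$ and $i\omega(a-b)=i\pi/(4\omega)$) yields $a=0$, $b=-\pi/(4\omega^{2})$, which is \mref{lem}. Equivalently, the numerator $\eta_1\overline{\omega}_2-\eta_2\overline{\omega}_1$ in the explicit formula for $a$ vanishes.

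The one genuinely nontrivial assertion is that in the lemniscatic case $\omega_1,\omega_2,\omega_3$ are the \emph{only} zeros. Here $\xi=h+\bar g$ with $h(z)=\zeta(z)$ and $g(z)=bz$ (note $b$ is now real), so besides oddness $\xi$ satisfies $\xi(\bar z)=\overline{\xi(z)}$ and $\xi(iz)=-i\xi(z)$, which generate a dihedral action of order $8$ under which $\xi$ is equivariant. On the real axis $\xi(x)=\zeta(x)-\frac{\pi}{4\omega^{2}}x$ is real with $\xi'(x)=-\wp(x)-\frac{\pi}{4\omega^{2}}<0$ on $(0,\omega)$ (since $\wp>0$ there), so $\xi$ decreases strictly from $+\infty$ to $\xi(\omega)=0$ and has no zero on the real axis other than $\omega_1$; via $\xi(iy)=-i\xi(y)$ the same holds on the imaginary axis, with $\omega_2$.

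To exclude all remaining zeros I would argue by winding number. Since $\xi\sim 1/z$ near $\Gamma$ and $\xi$ is doubly periodic, the sum of the local indices of the zeros of $\xi$ in one fundamental domain equals $1$ (the contributions of the opposite sides cancel, and the small circle around the pole contributes $1$). At $\omega_3$ one has $h'(\omega_3)=-\wp(\omega_3)=-e_3=0$ (for the square lattice $e_3=0$), so the index there is $-1$; at $\omega_1$ and $\omega_2$ the index equals $\operatorname{sign}(e_1^{2}-b^{2})=+1$, because a short computation with the period integral gives $e_1\omega^{2}=(\varpi/2)^{2}$ with $\varpi=2\int_0^1(1-t^{4})^{-1/2}\,dt>2$ Gauss's lemniscate constant, so $e_1>\pi/(4\omega^{2})=|b|$. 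Thus the half-periods already account for the whole sum, $1=(+1)+(+1)+(-1)$; but by the dihedral symmetry any further zero would lie in an orbit of size at least $4$ all of whose members have the same index, so such an orbit would alter the sum by at least $4$ in absolute value — a contradiction. The step I expect to demand the most care is making this count rigorous, in particular ruling out \emph{degenerate} zeros (simultaneous vanishing of $\xi$ and of $|\wp(z)|-|b|$, where the index could be $0$ and the above argument breaks); for this one can appeal to the study by Lin and Wang of the critical points of the Green's function $G$ of a flat torus — $\xi$ is, up to a constant, $\partial_z G$, and they prove that for a rectangular torus, the lemniscatic one included, $G$ has exactly three, non-degenerate, critical points, situated precisely at the half-periods.
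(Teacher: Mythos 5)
Your route to the easy assertions (oddness, double periodicity via the system determining $a,b$, harmonicity from the splitting into holomorphic plus antiholomorphic, zeros at half-periods from oddness and periodicity, and the computation of $a,b$ in the lemniscatic case via $\zeta(iz)=-i\zeta(z)$ and Legendre's identity) coincides with the paper's. The interesting divergence is in the final claim that in the lemniscatic case the half-periods are the \emph{only} zeros. The paper attacks this via the theory of planar harmonic mappings: it identifies the critical set $\Sigma=\{|\wp(z)|=\pi/(4\omega^2)\}$, checks it is nonsingular, and invokes Duren's theory of caustics to say the number of preimages of $c$ is $1$ outside the caustic $\xi(\Sigma)$ and $3$ inside, with $c=0$ inside; the Jacobian signs at $\omega_1,\omega_2,\omega_3$ ($+,+,-$) appear only as a consistency check with total degree $1$. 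You instead take the index computation as the engine: degree $1$ from the simple pole, indices $+1,+1,-1$ at the half-periods (your identity $e_1\omega^2=(\varpi/2)^2$ with $\varpi>\sqrt{\pi}$ gives $e_1>|b|$, matching the paper's $\Gamma(1/4)$ estimate), and then the dihedral symmetry generated by $z\mapsto iz$ and $z\mapsto\bar z$ forces any further nondegenerate zero to lie in an orbit of size at least $4$, all of the same sign, overshooting the total. This is a genuinely different and rather cleaner mechanism for the counting. Both proofs, however, hinge on the same unaddressed point: the paper tacitly assumes $0$ is a regular value strictly inside the caustic, and you explicitly flag the possibility of degenerate zeros (where $\xi=0$ meets $|\wp|=|b|$) and propose to close it by citing Lin--Wang on critical points of the Green's function on a rectangular torus. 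That is a legitimate patch, though it somewhat outsources the whole statement, since $\xi$ is up to a constant $\partial_z G$ and their theorem already gives exactly three nondegenerate critical points at the half-periods; if you want a self-contained version, the missing ingredient in both arguments is a direct check that $0\notin\xi(\Sigma)$, which would simultaneously validate the paper's ``inside the caustic'' step and your ``no degenerate zeros'' step.
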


\begin{proof} The property $\xi(-z)=-\xi(z)$ follows from the same property of $\zeta(z).$ 
The double-periodicity follows from the transformation properties of function $\zeta:$
$$
 \xi(z+2\omega_1)=\xi(z)+2\eta_1+2a\omega_1+2b\overline{\omega}_1=\xi(z),
 $$
 $$   
 \xi(z+2\omega_2)=\xi(z)+2\eta_2+2a\omega_2+2b\overline{\omega}_2=\xi(z)
 $$
 due to (\ref{lin}).  Combining these two properties we have $\xi(\omega_i)=-\xi(-\omega_i)=-\xi(\omega_i)$, so $\xi(\omega_i)=0$ for all half-periods.
%One can show that they are the only zeroes of $\xi$ ({\color{magenta} still to be done}). 

In the lemniscatic case with $\omega_1=\omega \in \mathbb R, \, \omega_2=i\omega$ we have 
$\zeta(iz)=-i\zeta(z)$ and thus $\eta_2=-i\eta_1, \, \eta_1 \in \mathbb R.$
Corresponding $\wp$-function satisfies equation
$$
(\wp')^2=4\wp(\wp^2-e^2), \quad
e=\wp(\omega)=\frac{\Gamma^4(1/4)}{32\pi\omega^2},
$$
where $\Gamma$ is the classical Euler's $\Gamma$-function.

From Legendre identity we have
$
i\omega\eta_1-\omega\eta_2=2i\omega\eta_1=\frac{\pi i}{2},
$
so in this case
$$
\eta_1=\frac{\pi}{4\omega}, \,\, \eta_2=-i\frac{\pi}{4\omega},
$$
which gives
$
a=0, \,\, b=-\frac{\pi}{4\omega^2}
$
and the relation (\ref{lem}).

Note that the function $\xi(z)$ satisfies the equation
$
\partial \bar{\partial} \xi(z,\bar z)=0
$
and thus is indeed a complex-valued harmonic function. The zeros of such functions were extensively studied, see \cite{Duren} and references therein.

The number of the zeros of such functions depends on the position of zero in relation with the caustic defined as the image $\xi(\Sigma) \subset \mathbb C$ of the critical set
$$\Sigma:=\{z\in \mathbb C: J_\xi(z,\bar z)=0\},$$ where $J_\xi$ is the Jacobian of the map $\xi:  \mathbb R^2 \to \mathbb R^2.$ The real Jacobian of the harmonic function
$f(z,\bar z)=g(z)+h(\bar z)$ with holomorphic $g,h$ is
$$
J_f(z,\bar z)=|g'(z)|^2-|h'(z)|^2.
$$
Thus in our case the critical set is
$$\Sigma:=\{z\in \mathbb C: |\wp(z)|=\frac{\pi}{4\omega^2}\}.$$ 
Note that this level of the real function $F(z,\bar z)=|\wp(z)|$ is non-singular. Indeed,
if $\wp'(z)=0$ then $z$ is a half-period and thus $\wp(z)=0$, or $\wp(z)=\pm e.$
Since $3.6<\Gamma(1/4)< 3.7$ we have $$e=|\wp(z)|=\frac{\Gamma^4(1/4)}{32\pi\omega^2}>\frac{\pi}{4\omega^2}.$$
Thus by the general theory \cite{Duren} the equation $\xi(z)=c$ has one solution if $c$ lies outside the caustic and 3 solutions if $c$ is inside the caustic.
Since $c=0$ is inside the equation $\xi(z)=0$ has 3 solutions, which are precisely the half-periods.

Note that the corresponding Jacobians at the half-periods $\omega_1=\omega, \, \omega_2=i\omega$ are
$$
J(z,\bar z) =|\wp(\omega)|^2-\left(\frac{\pi}{4\omega^2}\right)^2=\left(\frac{\Gamma^4(1/4)}{32\pi\omega^2}\right)-\left(\frac{\pi}{4\omega^2}\right)^2>0,
$$
while at $\omega_3=\omega+i\omega$ we have
$$
J(z,\bar z)=|\wp(\omega_3)|^2-\left(\frac{\pi}{4\omega^2}\right)^2=-\left(\frac{\pi}{4\omega^2}\right)^2<0,
$$
so the sum of the indices is 1, as it is expected since the degree of the map $\xi: \mathcal E\to \mathbb CP^1$ is 1.
\end{proof}
 
Let $I \subset [n+1]:=\{1,2,\dots,n+1\}$ be a finite subset and denote
$$
\xi_I(u):=\prod_{i \in I} \xi(u_i), \quad u\in \mathcal E^{n+1},
$$
where for empty subset $I=\emptyset$ we set $\xi_I(u)\equiv 0.$

 Consider now the following family of the non-holomorphic (but real-analytic) sections of the complex line bundle $L$ given by
% \begin{equation}
%\label{S}
%S(u, \varepsilon)=S_0+\varepsilon S_1, \quad S_1=S_0\sum_{(i_1,\dots,i_n)\in \mathbb Z_2^n\setminus 0} \xi(u_1)^{i_1}\dots \xi(u_n)^{i_n}.
%\end{equation}
 \begin{equation}
\label{S}
S(u, a)=S_0(u)+S_0(u)\sum_{I,J \subset [n+1], I\cap J=\emptyset}a_{IJ}(\xi_I(u)+\xi_J(u))
\end{equation}
with arbitrary coefficients $a_{IJ}=a_{JI}\in \mathbb C,$ assuming that both subsets $I$ and $J$ cannot be empty simultaneously.

\begin{Theorem}
For generic coefficients $a_{IJ}$ the zero locus of this section $$\mathcal M_W(a)=\{u\in \mathcal E^{n+1}: S(u,a)=0\}\subset \mathcal E^{n+1}$$
 is a smooth connected real-analytic $U$-manifold, which can be used as a representative of the cobordism class of the coefficient $[\mathcal B^n]$ in the Chern-Dold character.
\end{Theorem}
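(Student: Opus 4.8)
The plan is to verify, for a suitable choice of the coefficients $a_{IJ}$, that $\mathcal{M}_W(a)$ is (i) a smooth real-analytic submanifold of $\mathcal{E}^{n+1}$ of real codimension $2$, (ii) a $U$-manifold representing the cobordism class $[\mathcal{B}^{2n}]=[\Theta^n]$, and (iii) connected; the connectedness will be the delicate part. First I would check that $S(u,a)$ is a genuine real-analytic section of $L$ for \emph{every} value of the coefficients: since $\sigma(z)=z+O(z^{5})$ cancels the simple pole of $\xi(z)=z^{-1}+O(z)$, each product $\sigma(u_i)\xi(u_i)$ extends real-analytically across $u_i=0$ with $\sigma(z)\xi(z)\to 1$ as $z\to 0$, so $S_0(u)(\xi_I(u)+\xi_J(u))$ is real-analytic on all of $\mathcal{E}^{n+1}$; and because the functions $\xi_I+\xi_J$ are doubly periodic (the preceding lemma), these sections carry the same automorphy factor as $S_0$. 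Consequently, whenever $S(\cdot,a)$ is transversal to the zero section, $\mathcal{M}_W(a)$ is a smooth real-analytic submanifold of real dimension $2n$; its normal bundle is canonically $\mathcal{L}=i^{*}L$, and since $T\mathcal{E}^{n+1}$ is holomorphically trivial this is exactly the complex-framing construction of Section 2, so $\mathcal{M}_W(a)$ is a $U$-manifold, and $i_{*}\langle\mathcal{M}_W(a)\rangle\in H_{2n}(\mathcal{E}^{n+1},\mathbb{Z})$ is Poincar\'e dual to the Euler class $c_1(L)=D$.

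For the cobordism class I would run verbatim the computation in the proof of Lemma 3.2. The normal bundle $\mathcal{L}$ has rank one, so $c^{\nu}_{\lambda}(\mathcal{M}_W(a))=0$ for every partition $\lambda$ of $n$ other than $\lambda=(n)$, while
\[
c^{\nu}_{(n)}(\mathcal{M}_W(a))=(c_1(\mathcal{L})^{n},\langle\mathcal{M}_W(a)\rangle)=(D^{n},\,i_{*}\langle\mathcal{M}_W(a)\rangle)=(D^{n+1},\langle\mathcal{E}^{n+1}\rangle)=(n+1)!,
\]
using $D^{n+1}=(n+1)!$ as recalled in the proof of Lemma 3.2. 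These are the same normal Chern numbers as those of $\Theta^{n}$, so by the Milnor--Novikov criterion (Chern numbers determine the class in the torsion-free ring $\Omega_U$) we get $[\mathcal{M}_W(a)]=[\Theta^{n}]=[\mathcal{B}^{2n}]$ for any $a$ making $S(\cdot,a)$ transversal.

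Smoothness for generic $a$ is a parametric transversality (Sard) argument. Let $B$ be the finite-dimensional space of admissible coefficient vectors and $\mathcal{Z}=\{(u,a)\in\mathcal{E}^{n+1}\times B:\ S(u,a)=0\}$. The derivatives of $S$ in the parameter directions at $(u_0,a_0)$ are $S_0(u_0)(\xi_I(u_0)+\xi_J(u_0))$, and I claim at least one of them is nonzero whenever $S(u_0,a_0)=0$. If $Z=\{i:u_{0,i}=0\}$ is nonempty, take $(I,J)=(Z,\emptyset)$: using $\sigma(z)\xi(z)\to 1$ one gets $S_0(u_0)\xi_Z(u_0)=\prod_{j\notin Z}\sigma(u_{0,j})\neq 0$. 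If $Z=\emptyset$ then $S_0(u_0)\neq 0$, and $S(u_0,a_0)=0$ forces $\sum_{I,J}a_{IJ}(\xi_I(u_0)+\xi_J(u_0))=-1$, so some $\xi_I(u_0)+\xi_J(u_0)\neq 0$; since the coefficients are complex, one nonzero parameter-derivative already spans the fibre $L_{u_0}$. Hence the universal section is a submersion along $\mathcal{Z}$, so $\mathcal{Z}$ is a smooth real-analytic manifold, and Sard's theorem applied to the proper projection $\mathcal{Z}\to B$ shows $\mathcal{M}_W(a)$ is smooth for almost every $a$, with smooth locus $B_{\mathrm{reg}}\subset B$ open and dense.

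The remaining, and main, point is connectedness. Rather than a fully generic $a$ I would take $a=\epsilon a_0$ with $a_0$ a generic direction and $\epsilon>0$ small: for a generic direction the ray $\{\epsilon a_0\}$ meets the (real-analytic, codimension $\geq 1$) discriminant $B\setminus B_{\mathrm{reg}}$ only in a set not accumulating at the origin, so $\mathcal{M}_W(\epsilon a_0)$ is smooth for all small $\epsilon$. Now $\mathcal{M}_W(\epsilon a_0)=\{S_0+\epsilon\sum_{I,J}a_{IJ}S_0(\xi_I+\xi_J)=0\}$ is, for $\epsilon$ small, a small smoothing of the divisor $\{S_0=0\}=\bigcup_{i=1}^{n+1}\{u_i=0\}$, a simple normal crossing divisor whose components pairwise intersect, so its dual complex is the full simplex on $n+1$ vertices and in particular connected. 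A sufficiently small smoothing of a connected simple normal crossing divisor is again connected: away from the singular strata the perturbed zero set is a graph over the smooth part of $\{S_0=0\}$, while near a stratum $\{u_i=0,\ i\in I\}$ it is a local smoothing of $\prod_{i\in I}u_i=0$, which is connected, so the sheets get glued to one another along their intersections. Hence $\mathcal{M}_W(\epsilon a_0)$ is a smooth connected $U$-manifold representing $[\mathcal{B}^{2n}]$, which proves the theorem. I expect the two places needing real care to be precisely these last claims --- that a generic ray from the origin avoids the discriminant near $0$, and the precise statement and proof (most cleanly via a Thom--Mather stratified-isotopy / Milnor-fibre argument) that a sufficiently small smoothing of a connected simple normal crossing divisor is connected --- together with the remark that this only produces a nonempty open set of good parameters, so that upgrading it to a genuinely dense (``generic'') set requires knowing that the number of components is constant on each component of $B_{\mathrm{reg}}$.
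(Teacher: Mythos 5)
Your proof of smoothness and of the cobordism class is essentially the paper's. For smoothness, both you and the authors form the universal zero set $\mathcal M\subset\mathcal E^{n+1}\times\mathbb C^N$ and show that the $a$-partials $S_0(u)(\xi_I(u)+\xi_J(u))$ already span the fibre of $L$ at every $(u_0,a_0)$ with $S(u_0,a_0)=0$, then conclude by Sard. Your case split on $Z=\{i:u_{0,i}=0\}$ is a minor variant of the paper's choice $(I,J)=(\{i:\sigma(u_i)=0\},\{j:\xi(u_j)=0\})$, and is in fact a bit cleaner in that it makes explicit where the hypothesis $S(u_0,a_0)=0$ is actually used. For the cobordism class the paper says tersely that $\mathcal M_W(a)$ and $\Theta^n$ are both smooth zero loci of sections of ``the same line bundle'' and invokes Theorem~1.1; your spelled-out normal Chern-number computation (rank-one normal bundle kills $c^\nu_\lambda$ for $\lambda\neq(n)$, and $D^{n+1}=(n+1)!$ gives $c^\nu_{(n)}$, then Milnor--Novikov) is what that sentence means once one notices that the two bundles in question live over different abelian varieties, so it is really only the topological data that enters. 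I would call this the same argument, written out rather than asserted.

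The genuine divergence is connectedness, and you are right to flag it: the word ``connected'' is in the statement, but the paper's proof does not touch it, so you are filling an actual gap. Your local picture is correct --- near a generic point of $\{u_1=0\}$ the equation $S(u,\epsilon a_0)=0$ forces $u_1=O(\epsilon)$, so $\mathcal M_W(\epsilon a_0)$ sits as a small perturbation of each open stratum of the normal crossing divisor $\bigcup_i\{u_i=0\}$ (while staying disjoint from it), and near a crossing the local model $u_1u_2=\mathrm{const}$ joins adjacent sheets; since the components pairwise meet, the dual complex is a simplex and the smoothing should be connected. You have correctly identified what is missing: a rigorous version of ``a sufficiently small smoothing of a connected simple normal crossing divisor is connected'' (Thom--Mather stratified isotopy or a Milnor-fibre argument), the fact that a generic ray from the origin avoids the discriminant near $0$, and a constancy-of-components statement to promote ``a nonempty open set of parameters'' to ``generic.'' One pitfall worth noting for the last point: since $S$ is real-analytic but not holomorphic in $u$, the discriminant in $\mathbb C^N$ is not obviously complex-analytic, so one cannot shortcut the argument by working over the connected complement of a complex hypersurface in parameter space.
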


\begin{proof}
Consider the set $$\mathcal M=\{(u,a): u\in \mathcal E^{n+1}, \, a=(a_{IJ}) \in \mathbb C^N, S(u, a)=0\} \subset \mathcal E^{n+1}\times \mathbb C^N.$$  We claim that this is a smooth submanifold of this product. Indeed, assume that
$$
\frac{\partial S}{\partial a_{IJ}}=S_0(u)(\xi_I(u)+\xi_J(u))=0
$$
for all pairs of non-intersecting subsets $I,J \subset [n+1].$
Then, in particular, we have
$$
\prod_{i=1}^{n+1}\sigma(u_i)=0, \,\, \prod_{i=1}^{n+1}\sigma(u_i)\xi(u_i)=0,
$$
so some of the coordinates of the potential singularities equal to 0 and some to a half-period.
Let $$I=\{i \in [n+1]: \sigma(u_i)=0\}, \, J=\{j \in [n+1]: \xi(u_i)=0\},\,I\cap J=\emptyset,$$
then the corresponding $\frac{\partial S}{\partial a_{IJ}}=S_0(u)(\xi_I(u)+\xi_J(u))=S_0(u)\xi_I(u)\neq 0,$ since $\sigma(z)\xi(z)=0$ only at half-periods. The contradiction means that the submanifold $\mathcal M$ is indeed non-singular.
Now the smoothness of $\mathcal M_W(a)$ follows from Sard's Lemma, saying that the set of critical values of the natural projection $\pi: \mathcal M \to \mathbb C^N$ has measure zero.

Since both $\mathcal M_W(a)$ and $\Theta^n$ are the smooth zero loci of two sections of the same line bundle, they have the same cobordism class. Now the claim follows from Theorem 1.1.
\end{proof}

Note that Theorem 4.9 implies that any multiple of the theta divisor $k^{n+1}[\Theta^n]=[\Theta^n(k)], \, k\geq 2$ does admit an explicit smooth algebraic realisation inside $\mathcal E^{n+1}$ as zero locus of a generic section of the line bundle $L^k.$ 

For example, for $k=2$ 
the space of sections of $L^2$ has dimension $2^{n+1}$ and is generated by
$$
\Phi_\varepsilon(u)=\prod_{k=1}^{n+1}\phi_{\varepsilon_k}(u_k), \,\, u=(u_1,\dots,u_{n+1})\in \mathcal E^{n+1}, \,\, \varepsilon= (\varepsilon_1,\dots, \varepsilon_{n+1})\in \mathbb Z_2^{n+1},
$$
\beq{Phieps2}
\phi_0(z)=\sigma^2(z), \quad \phi_1(z)=\sigma^2(z+\omega)e^{-2\eta z},  
\eeq
where $\omega$ is any half-period and $\eta=\zeta(\omega).$ Indeed, $\sigma(z)$ satisfies the following transformation properties under the shift by the periods \cite{WW}
$$
\sigma(z+2\omega_k)=-\sigma(z)e^{2\eta_k(z+\omega_k)},\,\, k=1,2
$$
which imply the transformation properties 
$$
\phi_\epsilon(z+2\omega_k)=\phi_\epsilon(z)e^{4\eta_k(z+\omega_k)}, \, \epsilon=0,1, \,\, k=1,2.
$$
For $\phi_0(z)=\sigma^2(z)$ this is straightforward, while for $\phi_1(z)$ this follows from the Legendre identity. From the results of the previous section we obtain

\begin{prop}
For generic coefficients $c_{\varepsilon}, \, \varepsilon \in \mathbb Z_2^{n+1}$ the zero locus
\beq{sec2}
\Phi_c(u):=\sum_{\varepsilon \in \mathbb Z_2^{n+1}}c_{\varepsilon}\Phi_\varepsilon(u)=0, \quad u=(u_1,\dots,u_{n+1})\in \mathcal E^{n+1}
\eeq
is a smooth irreducible algebraic variety, representing the cobordism class $2^{n+1}[\Theta^n]=[\Theta^n(2)].$
\end{prop}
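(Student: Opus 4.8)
The plan is to recognise $\Phi_c$ as a general holomorphic section of the ample line bundle $L^2$ on the abelian variety $\mathcal{E}^{n+1}$, and then to combine the classical Bertini and Lefschetz theorems with formula~\mref{cobt} to obtain all four assertions (smoothness, irreducibility, algebraicity, cobordism class). Throughout I take $n\ge 1$; for $n=0$ the zero locus is simply two points and only the cobordism identity $2[\Theta^0]=[\Theta^0(2)]$ remains.

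First I would check that the $2^{n+1}$ functions $\Phi_\varepsilon$ form a basis of $H^0(\mathcal{E}^{n+1},L^2)$. On a single factor the principal polarisation is $L=\mathcal{O}(0)$, so $L^2=\mathcal{O}(2\cdot 0)$ carries a two-dimensional space of sections; the transformation formulae for $\sigma$ and the Legendre identity recalled above show that $\phi_0(z)=\sigma^2(z)$ and $\phi_1(z)=\sigma^2(z+\omega)e^{-2\eta z}$ are sections of it, and they are independent because $\phi_0$ vanishes (to order two) only at $0$ while $\phi_1$ vanishes only at the non-trivial half-period $\omega$. Since $L^2$ on the product is the exterior tensor product of these, the $2^{n+1}$ products $\Phi_\varepsilon$ span a space of the right dimension and hence form a basis, so $\Phi_c$ is indeed a general section of $L^2$.

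The disjointness of the zero loci of $\phi_0$ and $\phi_1$ on $\mathcal{E}$ shows directly that $|L^2|$ is base-point free on $\mathcal{E}^{n+1}$: given $u=(u_1,\dots,u_{n+1})$ pick for each $i$ the index $\varepsilon_i$ with $\phi_{\varepsilon_i}(u_i)\neq 0$, so $\Phi_\varepsilon(u)\neq 0$ (this is Lefschetz's statement that $L^2$ is globally generated on any abelian variety). Bertini's theorem in characteristic zero then gives that the general member $\mathcal{V}_c:=\{\Phi_c=0\}$ is a smooth divisor, of complex dimension $n$. Since $L^2$ is ample and the canonical bundle of $\mathcal{E}^{n+1}$ is trivial, the Lefschetz hyperplane theorem for smooth effective ample divisors (equivalently: the ideal-sheaf sequence $0\to L^{-2}\to\mathcal{O}\to\mathcal{O}_{\mathcal{V}_c}\to 0$ together with $H^{n}(\mathcal{E}^{n+1},L^2)=0$, valid for $n\ge 1$ by the vanishing theorem for ample line bundles on abelian varieties) yields $b_0(\mathcal{V}_c)=b_0(\mathcal{E}^{n+1})=1$; being smooth and connected, $\mathcal{V}_c$ is irreducible. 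Algebraicity is then automatic, $\mathcal{V}_c$ being a divisor in the linear system of the ample bundle $L^2$ on a projective variety (GAGA).

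Finally, $\mathcal{V}_c$ is by construction a smooth zero locus of a generic section of $L^2$, i.e.\ one of the varieties $\Theta^n(2)$; its cobordism class is independent of the chosen section and, by~\mref{cobt}, equals $2^{n+1}[\Theta^n]=2\,\Psi^2_U([\Theta^n])$. This gives the last assertion. The one step I expect to require care is the irreducibility, precisely because $L^2$ is ample but not very ample, so one cannot cite the hyperplane-section form of the Lefschetz theorem directly but must use its version for ample divisors (or the equivalent vanishing computation above); the remaining steps are routine applications of Bertini, GAGA, and the already-established formula for $[\Theta^n(k)]$.
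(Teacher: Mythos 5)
Your proof is correct and follows essentially the same route as the paper: check that the $\Phi_\varepsilon$ give a basis of $H^0(\mathcal{E}^{n+1},L^2)$, apply Bertini for smoothness and Lefschetz/ample-vanishing for connectedness, then invoke Theorem 4.9 for the cobordism identity. You fill in the irreducibility step (via the ideal-sheaf sequence and $H^n(\mathcal{E}^{n+1},L^2)=0$) more carefully than the paper, which only remarks "from the results of the previous section", but the underlying ingredients are identical.
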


\section{Discussion: Milnor-Hirzebruch problem}

The Milnor-Hirzebruch problem was first posed by Hirzebruch in his ICM-1958 talk \cite{Hirz}. Its algebraic version can be formulated in our notations as follows:

{\it Which sets of $p(n)$ integers $c_\lambda, \, \lambda \in \mathcal P_n$ can be realised as the Chern numbers $c_\lambda(M^n)$ of some smooth irreducible complex algebraic variety $M^n$?}

In this version it still remains largely open, although some arithmetic restrictions are known since the work of Milnor and Hirzebruch.

In particular, in the (complex) dimension $n=1,2,3$ we have the following congruences for the usual Chern numbers of any almost complex manifold (see Hirzebruch \cite{Hirz}, section 7):
$$
n=1: \quad c_1\equiv  0\mod 2, \quad\quad
n=2: \quad c_2+c_1^2 \equiv 0\mod 12,
$$
$$
n=3: \quad c_1c_2 \equiv 0\mod 24,\quad c_3 \equiv c_1^3\equiv 0\mod 2.
$$
$$
n=4: \quad -c_4+c_1c_3+3c_2^2+4c_1^2c_2-c_1^4 \equiv 0\mod 720,\quad c_1^2 c_2+2c_1^4\equiv 0\mod 12,
$$
$$
-2c_4+c_1c_3\equiv 0\mod 4.
$$
Since the total Chern class of $\Theta^n$ satisfies the relation (\ref{rel}) with $\mathcal D^n=(n+1)!$ all the characteristic numbers of $\Theta^n$ equal $\pm (n+1)!.$ 

In particular, for $n=1$ we have $c_1=-2$, for $n=2: c_1^2=c_2=6,$ for $$
n=3: c_1^3=-c_1c_2=c_3=-24,$$ $$n=4: c_1^4=c_1^2c_2=c_1c_3=c_2^2=c_4=120.$$

%\begin{Remark}
%It is interesting to note that Chern numbers $c_\omega$ of the normal bundle $\nu\Theta^n$ is delta-like function of $\omega$ (see (\ref{c1}),(\ref{c2})) while the Chern numbers of the tangent bundle $T\Theta^n$ are equidistributed. The analogy with the Fourier transform might be worthy to explore.
%\end{Remark}
We see that the first Hirzebruch congruence in each case is sharp for the theta divisors, which means that it cannot be improved in the algebraic setting. 
This is related to the fact that the Todd genus $Td(\Theta^n)=(-1)^n.$%, see (\ref{todd1}).

Recall that the divisibility conditions in terms of characteristic classes in $K$-theory were described by Hattori \cite{Hat-66} and Stong \cite{Stong-65, Stong-68}.

We can get now all divisibility conditions on the Chern numbers $c^{\nu}_\lambda(M^{2n})$ of $U$-manifolds in the following more effective way.
We apply the Landweber-Novikov operations $S_\lambda, \, |\lambda|<n$ to our formula (\ref{decom}) 
$$[M^{2n}]=\sum_{\lambda: |\lambda|=n}c^{\nu}_\lambda(M^{2n})\frac{[\Theta^\lambda]}{(\lambda+1)!}$$
and use that, according to Theorem 1.2, $S_\lambda[\Theta^n]=0$ unless $\lambda=(k), \, k < n$ when $S_{(k)}[\Theta^n]=[\Theta_{k}^{n-k}],$ which is a polynomial of $t_j=[\Theta^j], j=1,\dots, n-k$ with positive integer coefficients.
Substituting now $t_j=(-1)^j$ and demanding the result to be integer, we get all divisibility conditions on the Chern numbers $c^{\nu}_\lambda(M^{2n})$ of $U$-manifolds. 

 It is natural to ask if they can be improved for irreducible algebraic varieties.
We plan to discuss this in the light of our results elsewhere.

To conclude we would like to mention an important development due to Kotschik \cite{Kotschik}, who answered another question of Hirzebruch about all topologically invariant linear combinations of Chern numbers of smooth complex projective varieties (see also \cite{KS} for more results in this direction). 

\section{Acknowledgements}

We are very grateful to S.P. Novikov for his interest and encouragement, and to I. Cheltsov, S. Grushevsky, A. Prendergast-Smith and Yu. Prokhorov for very useful discussions of algebro-geometric aspects of this work. 

We are also grateful to W. Browder, J. Morava, T. Panov and anonymous referees for very helpful comments, helping to improve the earlier version of the paper.

%The work of the first author was partially supported by the Russian Ministry of Education and Science RF (project N1.13560.2019/13.1).

\end{document}